\newcommand{\R}{\mathbb{R}}
\newcommand{\N}{\mathbb{N}}
\DeclareMathOperator*{\sign}{sign}
\DeclareMathOperator*{\argmin}{arg\,min}
\DeclareMathOperator*{\rk}{rk}
\DeclareMathOperator*{\spn}{span}
\DeclareMathOperator*{\pr}{pr}
\DeclareMathOperator*{\conv}{conv}
\DeclareMathOperator*{\aff}{aff}
\DeclareMathOperator*{\affdim}{affdim}
\DeclareMathOperator*{\interior}{int}
\DeclareMathOperator*{\cl}{cl}
\DeclareMathOperator*{\ri}{ri}
\DeclareMathOperator*{\rb}{rb}
\crefname{hypothesis}{Hypothesis}{Hypotheses}
\newcommand{\newsiamass}[2]{
  \theoremstyle{plain}
  \theoremheaderfont{\normalfont\sc}
  \theorembodyfont{\normalfont\itshape}
  \theoremseparator{.}
  \theoremsymbol{}
  \newtheorem{#1}{#2}
}
\title{On the structure of regularization paths for piecewise differentiable regularization terms\thanks{Submitted to the editors DATE.
\funding{This work was funded by the European Union and the German Federal State of North Rhine-Westphalia within the EFRE.NRW project ``SET CPS'', and by the DFG Priority Programme 1962 ``Non-smooth and Complementarity-based Distributed Parameter Systems''.}}}
\author{Bennet Gebken\thanks{Department of Mathematics, Paderborn University, 33098 Paderborn, Germany 
  (\email{bgebken@math.upb.de}, \email{bieker@math.upb.de}).}
\and Katharina Bieker\footnotemark[2]
\and Sebastian Peitz\thanks{Department of Computer Science, Paderborn University, 33098 Paderborn, Germany 
  (\email{sebastian.peitz@upb.de}).}
}
\begin{document}

\maketitle

% REQUIRED
\begin{abstract}
  Regularization is used in many different areas of optimization when solutions are sought which not only minimize a given function, but also possess a certain degree of regularity. Popular applications are image denoising, sparse regression and machine learning. Since the choice of the regularization parameter is crucial but often difficult, path-following methods are used to approximate the entire regularization path, i.e., the set of all possible solutions for all regularization parameters. Due to their nature, the development of these methods requires structural results about the regularization path. The goal of this article is to derive these results for the case of a smooth objective function which is penalized by a piecewise differentiable regularization term. We do this by treating regularization as a multiobjective optimization problem. Our results suggest that even in this general case, the regularization path is piecewise smooth. Moreover, our theory allows for a classification of the nonsmooth features that occur in between smooth parts. This is demonstrated in two applications, namely support-vector machines and exact penalty methods.
\end{abstract}

% REQUIRED
\begin{keywords}
  Regularization, nonsmooth analysis, multiobjective optimization
\end{keywords}

% REQUIRED
\begin{AMS}
    65F22, %Numerical analysis -> Numerical linear algebra -> Ill-posedness and regularization problems in numerical linear algebra
    62J07, %Statistics -> Linear inference, regression -> Ridge regression; shrinkage estimators (Lasso) 
    90C29, %Operations research, mathematical programming -> Mathematical programming -> Multi-objective and goal programming
    49J52 %Calculus of variations and optimal control; optimization -> Existence theories in calculus of variations and optimal control -> Nonsmooth analysis
\end{AMS}

\section{Introduction}
    In optimization, \emph{regularization} is one of the basic tools for dealing with irregular solutions. For an objective function $f : \R^n \rightarrow \R$, the idea is to add a \emph{regularization term} $g : \R^n \rightarrow \R$ to $f$ which enforces regularity, and to weight $g$ with a \emph{regularization parameter} $\lambda \geq 0$ to control to which extent this regularity is enforced. So instead of optimizing $f$, the regularized problem
    \begin{align*}
        \min_{x \in \R^n} f(x) + \lambda g(x)
    \end{align*}
    with $\lambda \geq 0$ is solved. For $\lambda = 0$ the original problem is recovered. Increasing $\lambda$ leads to successively more regular solutions, at the cost of an increased objective value of $f$.  

    Depending on the application, the term ``regularity'' above can have many different meanings: In sparse regression, regularity of the solution means sparsity, and a prominent example for the regularization term is the $\ell^1$-norm \cite{T1996,HTF2009}. In hyperplane separation for data classification (also known as \emph{support-vector machines}), regularity is related to robustness of the derived classifier, and a possible regularization term can be derived from the scalar product of the data points with the hyperplane (known as the \emph{hinge loss}) \cite{B2006,HTF2009}. In image denoising, regularity means the absence of noise in the reconstructed image, which can be measured using the total variation \cite{C2004}. In (exact) penalty methods for constrained optimization problems, regularity refers to feasibility, and the sum of the individual constraint violations can be used as a regularization term \cite{NW2006,BKM2014}. Finally, in deep learning, regularization is used to avoid overfitting, which is related to the $\ell^2$- or $\ell^1$-norm of the weights \cite{B2006,GBC2016}.

    Clearly, the choice of the regularization parameter $\lambda$ has a large impact on the solution of the regularized problem. If $\lambda$ is chosen too small, then solutions are almost optimal for $f$ but irregular. If it is chosen too large, then solutions are highly regular but have an unacceptably large objective value with respect to $f$. One way of dealing with this issue is to not only compute a regularized solution for a single $\lambda$, but to compute the entire so-called \emph{regularization path} $R$, which is the set of all regularized solutions for all $\lambda \geq 0$. Obviously, simply solving the regularized problem for many $\lambda \geq 0$ to obtain a discretization of $R$ is inefficient. Instead, so-called \emph{path-following methods} (also known as \emph{continuation methods}, \emph{homotopy methods} or \emph{predictor-corrector methods}) can be used, which iteratively compute new points on the regularization path close to already known points until the complete path is explored. For the development of such methods, it is crucial to have a good understanding of the structure of the regularization path. In \cite{OPT2000,EHJ2004} it was shown that for sparse regression, the regularization path $R$ is piecewise linear and a path-following method was proposed for its computation. Similar results were shown in \cite{HRT2004} for support-vector machines. In a more general setting in \cite{RZ2007}, it was shown that if $f$ is piecewise quadratic and $g$ is piecewise linear, then $R$ is always piecewise linear. In case of the exact penalty method in constrained optimization, it was shown in \cite{ZL2015} that if the constrained problem is convex (and the equality constraints are affinely linear), then $R$ is piecewise smooth. Recently, in \cite{BGP2021}, the structure of the regularization path was analyzed for the case where $f$ is twice continuously differentiable and $g$ is the $\ell^1$-norm, with the results suggesting that $R$ is piecewise smooth.
    
    The goal of this article is to analyze the structure of the regularization path in a more general setting. Note that in the applications above, we have the pattern that $f$ is always smooth while $g$ is always nonsmooth. Thus, in this article, we will also assume that $f$ is smooth. For $g$, we will assume that it is merely \emph{piecewise differentiable} (as defined in \cite{S2012}). Compared to weaker assumptions in nonsmooth analysis like local Lipschitz continuity, this has the advantage that the Clarke subdifferential of $g$ is easy to compute and that the set of nonsmooth points of $g$ can essentially be described as a level set of certain smooth functions. Since all of the regularization terms in the above applications (except for the $\ell^2$-norm) are in fact piecewise differentiable, our setting generalizes many of the existing approaches. We will analyze the structure of $R$ by approximating it with the \emph{critical regularization path} $R_c$, which is based on the first-order optimality conditions of the regularized problem, and then identifying sufficient conditions for $R_c$ to be smooth around a given point. More precisely, our main result will be that if these conditions are met, then $R_c$ is locally the projection of a higher-dimensional smooth manifold onto $\R^n$ (cf.\ Theorem \ref{thm:h_level_set_manifold}). In particular, all points violating these conditions are potential ``kinks'' (or ``nonsmooth points'') of $R_c$. Depending on which condition is violated, this allows for a classification of nonsmooth features of the regularization path. Furthermore, the nature of our sufficient conditions suggests that $R_c$ (and $R$) is still piecewise smooth.
    
    The remainder of this article is structured as follows. In Section \ref{sec:basic_concepts}, we begin by introducing the basic concepts that we use in our theoretical results. Besides piecewise differentiability, these are \emph{multiobjective optimization} and \emph{affine geometry}. The former can be used to obtain an (almost) equivalent formulation of the regularization problem as a multiobjective optimization problem, while the latter is required for working with the subdifferential of $g$. In Section \ref{sec:structure_of_regularization_path}, we will analyze the structure of the regularization path $R$. We will do this by expressing $R_c$ as the union of the intersection of certain sets, whose structure we can analyze by applying standard results from differential geometry. In Section \ref{sec:examples}, we will apply our results to two problem classes, which are support-vector machines and the exact penalty method. Finally, we draw a conclusion and discuss possible future work in Section \ref{sec:conclusion}.
    
\section{Basic concepts} \label{sec:basic_concepts}
    %In this section, we will introduce the basics of piecewise differentiable functions, multiobjective optimization and affine geometry.
    
    \subsection{Piecewise differentiable functions}
        In the following, we will define piecewise differentiability and state the main results that we use throughout this article. For a more detailed introduction into the topic, we refer to \cite{S2012}. Let $U \subseteq \R^n$ be open.
    
        \begin{definition}
            Let $g : U \rightarrow \R$ be continuous and $g_i : U \rightarrow \R$, $i \in \{1,\dots,k\}$, be a set of $r$-times continuously differentiable (or $C^r$) functions for $r \in \N \cup \{\infty\}$. If $g(x) \in \{ g_1(x), \dots, g_k(x) \}$ for all $x \in U$, then $g$ is \emph{piecewise $r$-times differentiable} (or a $PC^r$\emph{-function}). In this case, $\{ g_1, \dots, g_k \}$ is called a \emph{set of selection functions} of $g$.
        \end{definition}
        
        When working with $PC^r$-functions in a local sense, it is useful to only consider the selection functions that have an impact on the local behavior around a given point. %To this end, we make the following definition.
        
        \begin{definition}
            Let $g : U \rightarrow \R$ be a $PC^r$-function and let $\{ g_1, \dots, g_k \}$ be a set of selection functions of $g$. Then
            \begin{align*}
                I(x) := \{ i \in \{1,\dots,k\} : g(x) = g_i(x) \}
            \end{align*}
            is the \emph{active set} at $x \in U$. A selection function $g_i$ is called \emph{active at} $x$ if $i \in I(x)$.
        \end{definition}
    
        From the continuity of selection functions it follows that for any $x^0 \in \R^n$, there is an open neighborhood $U' \subseteq U$ of $x^0$ such that
        \begin{align} \label{eq:g_local_sel_functions}
            g(x) \in \{ g_i(x) : i \in I(x^0) \} \quad \forall x \in U'.
        \end{align}
        But note that not all active selection functions are necessarily required for the local representation of $g$ around $x^0$. For example, if a selection function is only active in $x^0$ and nowhere else, then it can be neglected. Hence, there is also the following, stricter definition of activity. To this end, for a set $A \subseteq \R^n$, we denote by $\cl(A)$ the \emph{closure} and by $\interior(A)$ the \emph{interior} of $A$ with respect to the natural topology on $\R^n$.
        
        \begin{definition}
            Let $g : U \rightarrow \R$ be a $PC^r$-function and let $\{ g_1, \dots, g_k \}$ be a set of selection functions of $g$. Then
            \begin{align*}
                I^e(x) := \{ i \in \{1,\dots,k\} : x \in \cl(\interior(\{ y \in U : g(y) = g_i(y)\})) \}
            \end{align*}
            is the \emph{essentially active set} at $x \in U$. A selection function $g_i$ is called \emph{essentially active at} $x$ if $i \in I^e(x)$.
        \end{definition}
        
        Due to continuity we have $I^e(x) \subseteq I(x)$ for all $x \in \R^n$. 
        By definition, if a selection function $g_i$ is essentially active at some point, then $\interior(\{ y \in U : g(y) = g_i(y)\})$ is non-empty. In other words, there is an open subset of $U$ on which $g$ behaves like $g_i$. The following lemma shows that locally, a given set of selection functions can always be reduced to those that are essentially active.
        \begin{lemma} \label{lem:local_ess_active}
            Let $g : U \rightarrow \R$ be a $PC^r$-function and let $\{ g_1, \dots, g_k \}$ be a set of selection functions of $g$. Then for any $x^0 \in U$, there is an open neighborhood $U' \subseteq U$ of $x^0$ such that $\{ g_i : i \in I^e(x^0) \}$ is a set of selection functions of the restriction $g|_{U'}$ of $g$ to $U'$.
        \end{lemma}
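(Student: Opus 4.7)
The plan is to argue by contradiction, using equation \eqref{eq:g_local_sel_functions} to restrict to active selection functions and then a Baire category argument to rule out the non--essentially active ones.

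First, let $C_j := \{y \in U : g(y) = g_j(y)\}$, which is closed in $U$ by continuity of $g$ and $g_j$. By \eqref{eq:g_local_sel_functions}, choose an open neighborhood $U''$ of $x^0$ with $g(x) \in \{g_i(x) : i \in I(x^0)\}$ for all $x \in U''$, i.e.\ $U'' = \bigcup_{i \in I(x^0)} (C_i \cap U'')$. For each $i \in I(x^0) \setminus I^e(x^0)$ we have $x^0 \notin \cl(\interior(C_i))$, so there exists an open neighborhood $V_i$ of $x^0$ with $V_i \cap \interior(C_i) = \emptyset$. Define
\begin{equation*}
  U' \;:=\; U'' \cap \bigcap_{i \in I(x^0) \setminus I^e(x^0)} V_i,
\end{equation*}
which is an open neighborhood of $x^0$, and which is the candidate neighborhood for the lemma.

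I then need to show that $g(x) \in \{g_i(x) : i \in I^e(x^0)\}$ for all $x \in U'$, i.e.\ that $U' \subseteq \bigcup_{j \in I^e(x^0)} C_j$. Suppose otherwise and pick $x \in U'$ with $g(x) \neq g_j(x)$ for every $j \in I^e(x^0)$. Since there are only finitely many selection functions, continuity yields an open neighborhood $W \subseteq U'$ of $x$ on which $g(\,\cdot\,) \neq g_j(\,\cdot\,)$ for all $j \in I^e(x^0)$, so $W \cap C_j = \emptyset$ for those $j$. Combined with $W \subseteq U''$, this forces
\begin{equation*}
  W \;=\; \bigcup_{i \in I(x^0) \setminus I^e(x^0)} \bigl(C_i \cap W\bigr).
\end{equation*}

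This is a finite cover of the nonempty open set $W \subseteq \R^n$ by relatively closed subsets. Since $W$ is a Baire space, at least one set $C_i \cap W$ (with $i \in I(x^0) \setminus I^e(x^0)$) must have nonempty interior in $W$; as $W$ is open in $\R^n$, this interior is also open in $\R^n$, hence contained in $\interior(C_i)$. But then $\interior(C_i) \cap W \neq \emptyset$, contradicting $W \subseteq V_i$ and $V_i \cap \interior(C_i) = \emptyset$. This contradiction establishes the claim. The main (minor) obstacle is the Baire step: one must be careful that ``interior of $C_i \cap W$ in $W$'' really coincides with the topological interior used in the definition of $I^e$, which is exactly why the openness of $W$ in $\R^n$ is needed.
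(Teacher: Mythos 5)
Your proof is correct. The paper itself does not prove Lemma \ref{lem:local_ess_active} at all --- it simply cites Proposition 2.22 in \cite{U2001} --- so your argument is a genuine, self-contained replacement rather than a variant of the paper's proof. The structure is sound: shrinking to $U''$ via \eqref{eq:g_local_sel_functions}, cutting away neighborhoods $V_i$ avoiding $\cl(\interior(C_i))$ for the non--essentially active indices, and then deriving a contradiction from the covering $W = \bigcup_{i \in I(x^0)\setminus I^e(x^0)} (C_i \cap W)$ is exactly the standard route to this fact. Two small observations. First, you invoke the Baire property of $W$, but since the cover is \emph{finite} you only need the elementary fact that a finite union of nowhere dense sets is nowhere dense; no completeness or local compactness of $W$ is required, which makes the step even more robust than you suggest. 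Second, the point you flag as the ``main obstacle'' is handled correctly: because $W$ is open in $\R^n$, the interior of $C_i\cap W$ relative to $W$ is open in $\R^n$ and lies in $C_i$, hence in $\interior(C_i)$ as used in the definition of $I^e$; and since $U$ is open, taking interiors in $U$ versus $\R^n$ makes no difference either. As a pleasant by-product, your contradiction argument (applied with $x = x^0$ if $I^e(x^0)$ were empty) also shows $I^e(x^0) \neq \emptyset$, so the resulting set of selection functions is nonvacuous.
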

        \begin{proof}
            Proposition 2.22 in \cite{U2001}.
        \end{proof}
        
        Although we only assumed continuity in the definition of $PC^r$-functions, it is possible to show the following, stronger result. To this end, let $\| \cdot \|$ be any norm on $\R^n$.
        
        \begin{lemma} 
            Let $g : U \rightarrow \R$ be a $PC^r$-function. Then $g$ is locally Lipschitz continuous, i.e., for every $x \in U$, there is an open neighborhood $U' \subseteq U$ of $x$ and some $L > 0$ such that
            \begin{align*}
                | g(y) - g(z) | \leq L \| y - z \| \quad \forall y,z \in U'. 
            \end{align*}
        \end{lemma}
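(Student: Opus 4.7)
The plan is to exploit the fact that each selection function $g_i$ is $C^r$ with $r \geq 1$ and hence locally Lipschitz, and to transfer this Lipschitz behaviour to $g$ via a line-segment argument. First I would fix $x^0 \in U$ and choose a convex open neighbourhood $U'$ of $x^0$ contained in $U$ (for instance, an open ball of sufficiently small radius) on which each $g_i$ is Lipschitz with some constant $L_i$; this is possible because $\nabla g_i$ is continuous and can be uniformly bounded on a sufficiently small closed ball. Set $L := \max_i L_i$.

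Given $y, z \in U'$, I would parametrise the line segment $\gamma(t) := (1-t)y + tz$ for $t \in [0,1]$, which stays in $U'$ by convexity, and consider the continuous composite $\phi := g \circ \gamma : [0,1] \to \R$. The level sets $B_i := \{t \in [0,1] : g(\gamma(t)) = g_i(\gamma(t))\}$ are closed in $[0,1]$, since both $g$ and each $g_i$ are continuous, they cover $[0,1]$, and $I(\gamma(t)) = \{i : t \in B_i\}$ for every $t$.

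The core step is to produce a finite partition $0 = t_0 < t_1 < \dots < t_N = 1$ together with indices $i_1, \dots, i_N$ such that both $t_{j-1}$ and $t_j$ lie in $B_{i_j}$. Once this is available, a telescoping argument combined with the Lipschitz bound on each selection function yields
\begin{equation*}
    |g(z) - g(y)| \leq \sum_{j=1}^N |g_{i_j}(\gamma(t_j)) - g_{i_j}(\gamma(t_{j-1}))| \leq L \sum_{j=1}^N \|\gamma(t_j) - \gamma(t_{j-1})\| = L \|z - y\|,
\end{equation*}
which is precisely the desired local Lipschitz estimate.

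The construction of such a partition is the main technical obstacle. The natural strategy is to iteratively define $t_j := \sup\{t \in [t_{j-1}, 1] : [t_{j-1}, t] \subseteq B_{i_j}\}$ after choosing some $i_j \in I(\gamma(t_{j-1}))$; closedness of $B_{i_j}$ guarantees $t_j \in B_{i_j}$. The delicate points are (i) ensuring that $i_j$ can be chosen so that this supremum is strictly larger than $t_{j-1}$, and (ii) proving that the procedure terminates after finitely many steps. Both are combinatorial consequences of the fact that only the $k$ closed sets $B_1, \dots, B_k$ are involved and that $[0,1]$ is compact; a careful Baire-category pigeonhole argument closes this gap. Since the statement is a standard result in the $PC^r$ literature, I would carry out the above reduction and then cite the corresponding proposition in Scholtes' monograph \cite{S2012} rather than reprove the partitioning lemma from scratch.
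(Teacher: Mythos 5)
The paper itself offers no argument here---its ``proof'' is a bare citation of Corollary 4.1.1 in \cite{S2012}---so your sketch is really an expansion of that citation, and it is the standard one: restrict to a convex neighbourhood on which every selection function is Lipschitz with constant $L_i$, set $L=\max_i L_i$, chain along the segment from $y$ to $z$ through finitely many points at which consecutive pairs share an active selection function, and telescope. That outline is correct, and your final estimate goes through because the subsegment lengths of a straight line sum to $\|z-y\|$. One small repair is worth making in the technical step you flag: your iterative definition $t_j:=\sup\{t\in[t_{j-1},1]:[t_{j-1},t]\subseteq B_{i_j}\}$ demands that an entire subinterval lie in a single $B_{i_j}$, which is more than the telescoping needs and can stall at $t_j=t_{j-1}$ (an active index at $t_{j-1}$ need not stay active on any right-neighbourhood). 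What you actually need is only that the two \emph{endpoints} of each subsegment share an index, and that weaker chain property does hold for any finite closed cover $B_1,\dots,B_k$ of $[0,1]$: letting $S$ be the set of $t$ reachable from $0$ by such a finite chain, a pigeonhole argument on sequences approaching $\sup S$ from below and from above (using closedness of the $B_i$) shows $\sup S\in S$ and $\sup S=1$; no Baire-category machinery is required. With that adjustment the argument is complete and self-contained, whereas the paper simply defers the whole statement to \cite{S2012}.
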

        \begin{proof}
            Corollary 4.1.1 in \cite{S2012}.
        \end{proof}
    
        While $PC^r$-functions are generally nonsmooth, the previous lemma allows us to use the so-called \emph{Clarke subdifferential} from nonsmooth analysis to obtain first-order approximations. To this end, for $A \subseteq \R^n$ let $\conv(A)$
        %\begin{align*}
        %    \conv(A) := \left\{ \sum_{i = 1}^m \lambda_i a^i : m \in \N, a^i \in A, \lambda_i \in \R^{\geq 0}, i \in \{1,\dots,m\}, \sum_{i = 1}^m \lambda_i = 1 \right\}
        %\end{align*}
        be the \emph{convex hull of} $A$.
        For a general locally Lipschitz continuous function $g : U \rightarrow \R$, let $\Omega \subseteq U$ be the set of points in which $g$ is not differentiable. Then the (Clarke) subdifferential of $g$ at $x \in U$ can be defined as
        \begin{align} \label{eq:subdiff}
            \partial g(x) := \conv \left( \left\{ \xi \in \R^n : \exists (x^j)_j \in \R^n \setminus \Omega \text{ with} \lim_{j \rightarrow \infty} x^j = x \text{ and} \lim_{j \rightarrow \infty} \nabla g(x^j) = \xi \right\} \right).
        \end{align} 
        If $g$ is continuously differentiable in $x$, then $\partial g(x) = \{ \nabla g(x) \}$. Although the subdifferential is a set, it behaves similarly to the standard derivative. For example, there are generalized versions of the chain rule and the mean-value theorem. Furthermore, as we will see later, it can be used to obtain optimality conditions. For a more detailed introduction into nonsmooth analysis, we refer to \cite{C1990, BKM2014}.
        
        In practice, computing the Clarke subdifferential of an arbitrary locally Lipschitz function can be difficult (cf.\ Section 3.3 in \cite{L1989}). But fortunately, for the special case of $PC^r$-functions, there is a simpler expression for the Clarke subdifferential in terms of the gradients of the selection functions. More precisely, we can use the following result.
    
        \begin{lemma} \label{lem:PC_subdiff}
            Let $g : U \rightarrow \R$ be a $PC^r$-function and let $\{ g_1, \dots, g_k \}$ be a set of selection functions of $g$. Then
            \begin{align} \label{eq:PC_subdiff}
                \partial g(x) = \conv(\{ \nabla g_i(x) : i \in I^e(x) \}) \quad \forall x \in \R^n.
            \end{align}
        \end{lemma}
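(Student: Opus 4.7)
The plan is to establish the two inclusions separately. For the direction $\supseteq$, I would fix $i \in I^e(x)$: by the definition of the essentially active set there is a sequence $x^j \to x$ with $x^j \in \interior(\{y \in U : g(y) = g_i(y)\})$. On each such open set $g$ coincides with the $C^r$-function $g_i$, so $g$ is differentiable at $x^j$ with $\nabla g(x^j) = \nabla g_i(x^j)$. Passing to the limit and using continuity of $\nabla g_i$, formula (\ref{eq:subdiff}) yields $\nabla g_i(x) \in \partial g(x)$. Convexity of $\partial g(x)$ then gives $\conv(\{ \nabla g_i(x) : i \in I^e(x) \}) \subseteq \partial g(x)$.

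For the direction $\subseteq$, by convexity of both sides it suffices to show that every generator $\xi = \lim_j \nabla g(x^j)$ (with $x^j \to x$ and $g$ differentiable at $x^j$) already lies in $\{ \nabla g_i(x) : i \in I^e(x) \}$. Using \cref{lem:local_ess_active} I would first pass to a neighborhood $U'$ of $x$ on which $\{ g_i : i \in I^e(x) \}$ is itself a set of selection functions of $g$, so that $x^j \in U'$ eventually. The crux is then the following sub-claim: at any point $y \in U'$ where $g$ is differentiable, one has $\nabla g(y) = \nabla g_i(y)$ for some $i \in I^e(x)$. Granted the sub-claim, since $I^e(x)$ is finite, a pigeonhole on the indices $i_j$ associated to $x^j$ produces a subsequence with constant index $i$; continuity of $\nabla g_i$ then forces $\xi = \nabla g_i(x)$.

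To prove the sub-claim, I would argue via directional derivatives. Fix $d \in \R^n$; by the selection property on $U'$ and the finiteness of $I^e(x)$, there is a sequence $t_k \downarrow 0$ and a single index $i(d) \in I^e(x)$ with $g(y + t_k d) = g_{i(d)}(y + t_k d)$ for all $k$, and continuity forces $g(y) = g_{i(d)}(y)$. Differentiability of $g$ at $y$ combined with the elementary difference-quotient identity then gives $\langle \nabla g(y), d \rangle = \langle \nabla g_{i(d)}(y), d \rangle$. Enumerating $I^e(x) \cap I(y)$ as $\{ i_1, \dots, i_m \}$, the polynomial
\begin{align*}
    P(d) = \prod_{k=1}^m \langle \nabla g(y) - \nabla g_{i_k}(y), d \rangle
\end{align*}
therefore vanishes on all of $\R^n$, hence is identically zero in $\R[d_1, \dots, d_n]$; since this ring is an integral domain, at least one of the linear factors is the zero polynomial, which gives $\nabla g(y) = \nabla g_{i_k}(y)$ for some $k$, as required.

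The main obstacle is precisely this last step: replacing a \emph{direction-dependent} matching between $\nabla g(y)$ and some $\nabla g_{i(d)}(y)$ by a \emph{single} essentially active index $i$ that works for every direction simultaneously. A naive pigeonhole over directions does not suffice and the polynomial-factorization argument above is the cleanest route I know. A secondary technicality is the careful use of \cref{lem:local_ess_active} at the outset, without which stray selection functions that are active but not essentially active at $x$ could in principle contribute spurious limits. Alternatively, and much more briefly, the entire statement can be invoked from Proposition 4.3.1 of \cite{S2012}.
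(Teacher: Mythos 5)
Your argument is correct, but it is a genuinely different route from the paper's, which consists of a single citation to Proposition 4.3.1 in \cite{S2012} and gives no argument at all. What you have written is a complete, self-contained reconstruction of that textbook result from the definition \eqref{eq:subdiff}: the $\supseteq$ inclusion via sequences drawn from $\interior(\{y : g(y)=g_i(y)\})$ is exactly right, and the $\subseteq$ inclusion correctly isolates the two points where care is needed --- first the reduction via Lemma \ref{lem:local_ess_active} to the essentially active selections (without which a limit of gradients could a priori come from a selection function active near $x$ but not essentially active at $x$), and second the passage from the direction-dependent identity $\langle \nabla g(y)-\nabla g_{i(d)}(y), d\rangle = 0$ to a single index valid for all $d$. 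Your polynomial-factorization step handles the latter cleanly (equivalently: $\R^n$ cannot be covered by finitely many proper hyperplanes), and the final pigeonhole-plus-continuity step closes the argument. What the paper's approach buys is brevity and an appeal to a standard reference; what yours buys is transparency about why essential activity, rather than mere activity, is the right notion in \eqref{eq:PC_subdiff} --- which is in fact the distinction the surrounding text of the paper is at pains to motivate. One could shorten your sub-claim slightly by noting that the set of directions $d$ for which a fixed index $i_k$ works is closed, so by Baire (or just the finite-cover-by-hyperplanes observation) one set has nonempty interior and the corresponding linear form vanishes identically; this is the same idea in different clothing.
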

        \begin{proof}
            Proposition 4.3.1 in \cite{S2012}.
        \end{proof}

        By the previous result, knowing the classical gradients of all essentially active selection functions is sufficient to obtain the exact Clarke subdifferential. In particular, since the number of selection functions is finite by definition, it follows that subdifferentials of $PC^r$-functions are always convex polytopes. 
        %The simplicity of subdifferentials is one of the reasons why $PC^r$-functions are considered in this article.

        We conclude the introduction to $PC^r$-functions with some simple examples.
        \begin{example}
            \begin{itemize}
                \item[a)] Consider the $\ell^1$-norm on $\R^n$, i.e.,
                \begin{align*}
                    g : \R^n \rightarrow \R, \ x \mapsto \| x \|_1 := |x_1| + \dots + |x_n|.
                \end{align*}
                Then
                \begin{align*}
                    g(x) \in \left\{ \sum_{i = 1}^n \sigma_i x_i : \sigma \in \{-1,1\}^n \right\},
                \end{align*}
                so $g$ is $PC^\infty$ with selection functions
                \begin{align*}
                   g_{\sigma} : \R^n \rightarrow \R, \quad x \mapsto \sum_{i = 1}^n \sigma_i x_i \quad \text{for} \quad \sigma \in \{-1,1\}^n.
                \end{align*}
                For $x^0 \in \R^n$, the corresponding set of essentially active selection functions in $x^0$ is given by
                \begin{align*}
                    \left\{ g_\sigma : \sigma_i 
                    \begin{cases}
                        = \sign(x_i^0), & \text{if } x^0_i \neq 0 \\
                        \in \{-1,1\}, & \text{if } x^0_i = 0
                    \end{cases},
                    i \in \{1,\dots,n\}
                    \right\}.
                \end{align*}
                Therefore, the Clarke subdifferential of $g$ at $x^0 \in \R^n$ is given by
                \begin{align*}
                    \partial g(x^0) = \left\{ \xi \in \R^n : \xi_i
                    \begin{cases}
                        = \sign(x_i^0), & \text{if } x^0_i \neq 0 \\
                        \in [-1,1], & \text{if } x^0_i = 0
                    \end{cases},
                    i \in \{1,\dots,n\}
                    \right\}.
                \end{align*}
                \item[b)] As an example for a function that is differentiable almost everywhere but not $PC^r$ (for $r > 1$), consider
                \begin{align*}
                    g : \R \rightarrow \R, \ x \mapsto \sqrt{|x|}.
                \end{align*}
                Although $g$ is continuous everywhere and differentiable outside of $0$, it is not $PC^r$ (since it is not locally Lipschitz continuous in $0$).
            \end{itemize}
        \end{example}
    
    \subsection{Multiobjective optimization}
        In the following, we will give a brief introduction to (nonsmooth) multiobjective optimization. Due to the context of our paper, we only consider problems with two objectives here. For a more detailed introduction in the general case, we refer to \cite{M1998,E2005,MEK2014}.
        
        Let $f : \R^n \rightarrow \R$ and $g : \R^n \rightarrow \R$. The task of simultaneously minimizing $f$ and $g$ is denoted as
        \begin{align*} 
            \min_{x \in \R^n} 
            \begin{pmatrix}
                f(x) \\
                g(x)
            \end{pmatrix}
        \end{align*}
        and is called a \emph{multiobjective optimization problem} (MOP). The goal of multiobjective optimization is to find the so-called \emph{Pareto set}, which is defined as follows:
        
        \begin{definition}
            A point $x \in \R^n$ is called \emph{Pareto optimal} if there is no $y \in \R^n$ with 
            \begin{align*}
                f(y) < f(x) \text{ and } g(y) \leq g(x) \quad\quad \text{or} \quad\quad 
                f(y) \leq f(x) \text{ and } g(y) < g(x).
            \end{align*}
            The set of all Pareto optimal points is the \emph{Pareto set}. Its image under the objective vector $(f,g)$, i.e., the set $\{ (f(x),g(x))^\top : x \text{ is Pareto optimal} \} \subseteq \R^2$, is the \emph{Pareto front}.
        \end{definition}

         There are various different methods for solving nonsmooth MOPs, see, e.g., \cite{MKW2014,GP2021,DPA2002}. If both $f$ and $g$ are locally Lipschitz continuous, then the following theorem yields a necessary condition for Pareto optimality based on the Clarke subdifferentials (cf.\ \eqref{eq:subdiff}).
        
        \begin{theorem}
            Let $x \in \R^n$ be Pareto optimal. Then 
            \begin{align} \label{eq:KKT}
                0 \in \conv(\partial f(x) \cup \partial g(x)).
            \end{align}
        \end{theorem}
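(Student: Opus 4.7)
The plan is to argue by contradiction using a separation argument together with the standard fact that the Clarke subdifferential encodes the Clarke directional derivative as its support function. So suppose, for contradiction, that $0 \notin \conv(\partial f(x) \cup \partial g(x))$. Because $f$ and $g$ are locally Lipschitz continuous, $\partial f(x)$ and $\partial g(x)$ are nonempty, convex, and compact subsets of $\R^n$; hence the set $C := \conv(\partial f(x) \cup \partial g(x))$ is nonempty, convex, and compact as well. Since $0 \notin C$, by a strict separation theorem there exists $d \in \R^n$ such that
\begin{align*}
  \langle d, \xi \rangle < 0 \quad \forall \xi \in C,
\end{align*}
and in particular this inequality holds for every $\xi \in \partial f(x)$ and every $\xi \in \partial g(x)$.

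Next I would translate this into a common descent direction. For a locally Lipschitz function $h$, the Clarke directional derivative satisfies $h^\circ(x;d) = \max_{\xi \in \partial h(x)}\langle d,\xi \rangle$ (this is the defining support-function characterization, cited from \cite{C1990,BKM2014}). Applying this to both $f$ and $g$ together with the compactness of the subdifferentials and the strict inequality above, I obtain
\begin{align*}
  f^\circ(x;d) < 0 \quad \text{and} \quad g^\circ(x;d) < 0.
\end{align*}
Since $h^\circ(x;d) \geq \limsup_{t \downarrow 0} \frac{h(x+td)-h(x)}{t}$, the strict negativity of $f^\circ(x;d)$ and $g^\circ(x;d)$ implies that there exists $t_0 > 0$ such that for all $t \in (0,t_0)$,
\begin{align*}
  f(x+td) < f(x) \quad \text{and} \quad g(x+td) < g(x).
\end{align*}
Taking any such $t$ and setting $y := x+td$ contradicts the Pareto optimality of $x$, since then \emph{both} $f(y) < f(x)$ and $g(y) < g(x)$ hold simultaneously.

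The main obstacle, and really the only non-routine step, is invoking the identity $h^\circ(x;d) = \max_{\xi \in \partial h(x)}\langle d,\xi\rangle$ and the implication from $h^\circ(x;d) < 0$ to actual decrease along $d$. Both are standard facts about the Clarke subdifferential for locally Lipschitz functions (see \cite{C1990}), so the proof reduces to separation plus citation. A minor wrinkle is that one could alternatively argue via two separations—one against $\partial f(x)$ and one against $\partial g(x)$ with possibly different directions—but using the convex hull of the union lets a single direction $d$ serve both purposes, which is exactly why the condition \eqref{eq:KKT} is formulated with this combined convex hull.
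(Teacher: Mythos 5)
Your proof is correct. The paper itself does not prove this statement but only cites Theorem 12 in \cite{MEK2014}; your separation-plus-common-descent-direction argument is the standard proof underlying that citation (non-containment of $0$ in the compact convex set $\conv(\partial f(x)\cup\partial g(x))$ yields a single direction $d$ with $f^\circ(x;d)<0$ and $g^\circ(x;d)<0$, hence simultaneous strict decrease of both objectives for small $t$, contradicting Pareto optimality). All the ingredients you invoke — compactness and convexity of Clarke subdifferentials of locally Lipschitz functions, the support-function identity $h^\circ(x;d)=\max_{\xi\in\partial h(x)}\langle \xi,d\rangle$, and the bound $h^\circ(x;d)\geq\limsup_{t\downarrow 0}\tfrac{h(x+td)-h(x)}{t}$ — are standard and correctly applied, so the argument is complete.
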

        \begin{proof}
            Theorem 12 in \cite{MEK2014}.
        \end{proof}
        
        Since \eqref{eq:KKT} is only a necessary condition, we make the following definition:
        \begin{definition}
            A point $x \in \R^n$ is called \emph{Pareto critical} if it satisfies \eqref{eq:KKT}. The set of all Pareto critical points is the \emph{Pareto critical set}, denoted by $P_c$.
        \end{definition}

        The Pareto critical set is a superset of the actual Pareto set. If $f$ and $g$ are convex, then \eqref{eq:KKT} is also sufficient, so the Pareto critical set coincides with the set of Pareto optimal points in that case (cf.\ Theorem 3.2.11 in \cite{M1998}).
        
        Using a result about the convex hull of the union of convex sets (cf.\ Lemma 5.29 in \cite{AB2006}), \eqref{eq:KKT} is equivalent to
        \begin{align} \label{eq:KKT_convex_comb}
            \exists \alpha_1, \alpha_2 \geq 0, \xi^1 \in \partial f(x), \xi^2 \in \partial g(x) : \alpha_1 \xi^1 + \alpha_2 \xi^2 = 0, \alpha_1 + \alpha_2 = 1.
        \end{align}
        In accordance with the smooth case, we will refer to such $\alpha_1$ and $\alpha_2$ as \emph{KKT multipliers of} $f$ \emph{and} $g$ \emph{in} $x$, respectively. Note that \eqref{eq:KKT_convex_comb} implies
        \begin{align*}
            0 \in \{ \lambda_1 \xi^1 + \lambda_2 \xi^2 : \lambda_1, \lambda_2 \in \R, \lambda_1 + \lambda_2 = 1 \},
        \end{align*}
        where the right-hand side is a so-called \emph{affine space}. The properties of such spaces are analyzed in the area of \emph{affine geometry}.
        
    \subsection{Affine geometry}
        In the following, we will introduce the basic concepts of affine geometry and affine spaces which we will use in this article. For further details on this topic, we refer to \cite{R1970,G2011,J2015}.

        \begin{definition}
            \begin{itemize}
                \item[a)] Let $k \in \N$ and $a^i \in \R^n$, $i \in \{1,\dots,k\}$. Let $\lambda \in \R^k$ with $\sum_{i = 1}^k \lambda_i = 1$. Then $\sum_{i = 1}^k \lambda_i a^i$ is an \emph{affine combination of} $\{ a^1, \dots, a^k \}$.
                \item[b)] Let $E \subseteq \R^n$. Then $\aff(E)$ is the set of all affine combinations of elements of $E$, called the \emph{affine hull of} $E$. Formally,
                \begin{align*}
                    \aff(E) := \left\{ \sum_{i = 1}^k \lambda_i a^i : k \in \N, a^i \in E, \lambda_i \in \R, i \in \{1,\dots,k\}, \sum_{i = 1}^k \lambda_i = 1 \right\}.
                \end{align*}
                \item[c)] Let $E \subseteq \R^n$. If $\aff(E) = E$, then $E$ is called an \emph{affine space}.
            \end{itemize}
        \end{definition}

        Affine spaces can be thought of as linear spaces that were translated away from the origin. More precisely, if $E$ is an affine space and $a' \in E$, then the set
        \begin{align} \label{eq:corr_linear_subspace}
            E - a' = \{ a - a' : a \in E \} =: V
        \end{align}
        is a linear subspace of $\R^n$. (Note that $V$ does not depend on the choice of $a'$). This allows for the definition of \emph{affine independence}.
        
        \begin{definition}
            Let $E$ be an affine space and let $a^i \in E$, $i \in \{1,\dots,k\}$. Then the set $\{ a^1, \dots, a^k \}$ is called \emph{affinely independent} if $\{ a^i - a^j : i \in \{1,\dots,k\} \setminus \{ j \} \}$ is linearly independent for some $j \in \{1,\dots,k\}$.
        \end{definition}

        %It is possible to show that 
        If the condition in the previous definition holds for some $j \in \{1,\dots,k\}$, then it automatically holds for all $j \in \{1,\dots,k\}$ (cf.\ Lemma 2.4 in \cite{G2011}). As for linear independence, affine independence is related to uniqueness of the coefficients of affine combinations.
        
        \begin{lemma} \label{lem:affine_coeff_unique}
            Let $E$ be an affine space and let $a^i \in E$, $i \in \{1,\dots,k\}$. Let ${a \in \aff(\{ a^1, \dots, a^k \})}$ and $\lambda \in \R^k$ such that $a = \sum_{i = 1}^k \lambda_i a^i$ and $\sum_{i = 1}^k \lambda_i = 1$. Then $\{ a^1, \dots, a^k \}$ is affinely independent if and only if $\lambda$ is unique.
        \end{lemma}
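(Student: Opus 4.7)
The plan is to reduce the claim to the standard linear-algebraic fact that coefficients in a linear combination are unique if and only if the underlying vectors are linearly independent. I would fix some index $j \in \{1,\dots,k\}$ and work with $a^j$ as a basepoint; by the definition of affine independence (together with the remark immediately preceding the lemma, which ensures the choice of $j$ is irrelevant), the set $\{a^1,\dots,a^k\}$ is affinely independent if and only if the vectors $v^i := a^i - a^j$, $i \in \{1,\dots,k\} \setminus \{j\}$, form a linearly independent family in $\R^n$.

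Next I would set up a bijection between affine representations of $a$ over $\{a^1,\dots,a^k\}$ and linear representations of $a - a^j$ over $\{v^i : i \neq j\}$. For any $\lambda \in \R^k$ with $\sum_{i=1}^k \lambda_i = 1$, the identity
\begin{align*}
  \sum_{i=1}^k \lambda_i a^i - a^j = \sum_{i=1}^k \lambda_i (a^i - a^j) = \sum_{i \neq j} \lambda_i v^i
\end{align*}
shows that $a = \sum_{i=1}^k \lambda_i a^i$ is equivalent to $a - a^j = \sum_{i \neq j} \lambda_i v^i$. In the reverse direction, any linear representation $a - a^j = \sum_{i \neq j} \mu_i v^i$ yields an affine representation by setting $\lambda_i := \mu_i$ for $i \neq j$ and $\lambda_j := 1 - \sum_{i \neq j} \mu_i$, which automatically satisfies $\sum_{i=1}^k \lambda_i = 1$. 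This map is visibly a bijection, so uniqueness of the affine coefficient vector $\lambda$ corresponds exactly to uniqueness of the linear coefficients $(\mu_i)_{i \neq j}$.

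Finally, I would connect uniqueness with linear independence by the standard argument: if $\{v^i : i \neq j\}$ is linearly independent, then two representations of $a - a^j$ would differ by a null combination that is forced to be trivial, so the coefficients coincide; conversely, if the $v^i$ are linearly dependent, a nonzero null combination $\sum_{i \neq j} \mu_i v^i = 0$ can be added to the given representation $\lambda$ (extending by $\mu_j := -\sum_{i \neq j} \mu_i$, so that $\sum_i \mu_i = 0$ and thus the affine-sum constraint is preserved) to produce a second, distinct affine representation of $a$. I do not anticipate a real obstacle here; the only bookkeeping lies in verifying that the translation $a \mapsto a - a^j$ transports the affine-combination constraint $\sum_i \lambda_i = 1$ faithfully to and from the linear setting, which is immediate from the construction.
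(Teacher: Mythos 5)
Your proof is correct. The paper itself does not prove this lemma but only cites Lemma 2.5 in \cite{G2011}; your argument --- translating by the basepoint $a^j$ to set up a bijection between affine representations of $a$ with coefficient sum $1$ and linear representations of $a - a^j$ over the difference vectors $v^i$, and then invoking the standard equivalence of linear independence with uniqueness of coefficients --- is the standard self-contained proof of that cited result, and both directions (including the construction of a second representation from a nontrivial null combination with $\sum_i \mu_i = 0$) check out.
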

        \begin{proof}
            Lemma 2.5 in \cite{G2011}.
        \end{proof}

        Furthermore, it is possible to assign a dimension to an affine space and define \emph{affine bases} (also known as \emph{affine frames}).
        \begin{definition}
            Let $k \in \N$ and let $E$ be an affine space with corresponding linear subspace $V$ (cf.\ \eqref{eq:corr_linear_subspace}).
            \begin{itemize}
                \item[a)] Let $a^i \in E$, $i \in \{1,\dots,k\}$. Then $\{ a^1, \dots, a^k \}$ is called an \emph{affine basis of} $E$ if $\{ a^i - a^j : i \in \{1,\dots,k\} \setminus \{ j \} \}$ is a basis of $V$ for some $j \in \{1,\dots,k\}$.
                \item[b)] The \emph{(affine) dimension} of $E$ is the dimension of $V$, denoted by $\affdim(E)$.
            \end{itemize}
        \end{definition}

        The previous definition implies that an affine basis consists of $\affdim(E) + 1$ elements.
        It is easy to show that if $\{ a^1, \dots, a^k \}$ forms an affine basis of $E$, then $\aff(\{ a^1, \dots, a^k \}) = E$ and combined with Lemma \ref{lem:affine_coeff_unique}, we obtain that every element in $E$ has a unique representation as an affine combination of $\{ a^1, \dots, a^k \}$.

        The main reason why we need affine geometry in this article is \emph{Carathéodory's theorem}, which gives us an upper bound for the number of elements we have to consider when computing convex hulls:
        \begin{theorem} \label{thm:caratheodory}
            Let $A$ be a finite subset of $\R^n$. Then every element in $\conv(A)$ can be written as a convex combination of $\affdim(\aff(A)) + 1$ elements of $A$.
        \end{theorem}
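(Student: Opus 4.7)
The plan is the classical reduction argument. Given $x \in \conv(A)$, I begin with any representation $x = \sum_{i=1}^{k} \lambda_i a^i$ where $a^i \in A$, $\lambda_i > 0$ and $\sum_{i=1}^k \lambda_i = 1$, and among all such convex combinations I choose one in which the number of terms $k$ is minimal. The goal is to show $k \leq \affdim(\aff(A)) + 1$ by deriving a contradiction from the opposite inequality.

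Suppose $k > \affdim(\aff(A)) + 1$. Then $\{a^1, \dots, a^k\}$ cannot be affinely independent, since any affinely independent subset of $\aff(A)$ consists of at most $\affdim(\aff(A)) + 1$ elements (the associated differences form a linearly independent set in the corresponding linear subspace, which has dimension $\affdim(\aff(A))$). Unpacking the negation of affine independence, I would extract coefficients $\mu_1, \dots, \mu_k \in \R$, not all zero, satisfying $\sum_{i=1}^k \mu_i = 0$ and $\sum_{i=1}^k \mu_i a^i = 0$; by negating $\mu$ if necessary, at least one $\mu_i$ can be assumed strictly positive.

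Next I perturb the representation along $\mu$: set $\lambda'_i := \lambda_i - t^\ast \mu_i$ with $t^\ast := \min\{\lambda_i/\mu_i : \mu_i > 0\} > 0$, attained at some index $i^\ast$. From $\sum \mu_i = 0$ and $\sum \mu_i a^i = 0$ one checks $\sum_i \lambda'_i = 1$ and $\sum_i \lambda'_i a^i = x$, while the choice of $t^\ast$ guarantees $\lambda'_i \geq 0$ for all $i$, with equality at $i = i^\ast$. Dropping the zero term yields a convex combination of fewer than $k$ elements of $A$ representing $x$, contradicting the minimality of $k$.

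The main obstacle is the bridge between the definition of affine (in)dependence used in the excerpt, which is phrased via differences $a^i - a^j$, and the homogeneous form $\sum \mu_i a^i = 0$, $\sum \mu_i = 0$ needed for the perturbation step. This is a short linear-algebra translation — fix a pivot index $j$, use the linear dependence of $\{a^i - a^j : i \neq j\}$ to produce coefficients $\nu_i$, then set $\mu_i = \nu_i$ for $i \neq j$ and $\mu_j = -\sum_{i \neq j} \nu_i$ — but it is the one place where the definitions from the affine-geometry subsection must be carefully unpacked before the perturbation trick takes over.
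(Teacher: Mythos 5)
Your argument is correct: the minimal-representation-plus-perturbation argument, together with the translation between the difference-based definition of affine independence and the homogeneous relations $\sum_i \mu_i a^i = 0$, $\sum_i \mu_i = 0$, is the standard proof of Carath\'eodory's theorem, and every step (the bound $k \le \affdim(\aff(A))+1$ for affinely independent subsets, the choice of $t^\ast$, the preservation of $\sum_i \lambda'_i = 1$ and $\sum_i \lambda'_i a^i = x$) checks out. The paper itself gives no proof --- it simply cites Theorem 3.1 in \cite{G2011} --- so there is nothing to compare against beyond noting that your reduction is exactly the classical route such references take.
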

        \begin{proof}
            Theorem 3.1 in \cite{G2011}.
        \end{proof}

        % Evtl. "tight upper bound" nach Prop. 4 in BH2020, aber unschöne Referenz
        Carathéodory's theorem will later be used to lower the number of selection functions we have to consider for the computation of the Clarke subdifferential $\partial g(x)$ of $g$.
        
        Finally, the concept of affine spaces enables us to introduce a useful definition of the interior and the boundary of ``low-dimensional'' subsets of $\R^n$.
        \begin{definition} \label{def:relative_interior}
            Let $A \subseteq \R^n$ and let $\aff(A)$ be endowed with the subspace topology of $\R^n$. Then the \emph{relative interior of} $A$, denoted by $\ri(A)$, is the interior of $A$ in $\aff(A)$, i.e.,
            \begin{align*}
                \ri(A) := \{ x \in A : \exists U \subseteq \R^n \text{ open with } x \in U \text{ and } U \cap \aff(A) \subseteq A \}.
            \end{align*}
            The \emph{relative boundary of} $A$ is the set $\rb(A) := \cl(A) \setminus \ri(A)$, where $\cl(A)$ is the closure of $A$ in $\R^n$.
        \end{definition}
        
        In the case of convex polytopes, i.e., convex hulls of a finite number of points, the relative interior and boundary can be expressed in terms of the coefficients of convex combinations:
        \begin{lemma} \label{lem:relint_polytope}
            Let $A = \conv(\{ a^1, \dots, a^k \}) \subseteq \R^n$ with $k \in \N$. Then 
            \begin{align*}
                \ri(A) &= \left\{ \sum_{i = 1}^k \lambda_i a^i : \lambda \in (\R^{>0})^k, \sum_{i = 1}^k \lambda_i = 1 \right\}, \\
                \rb(A) &= \left\{ a \in A : \nexists \lambda \in (\R^{>0})^k \text{ with } \sum_{i = 1}^k \lambda_i = 1 \text{ and } a = \sum_{i = 1}^k \lambda_i a^i \right\}.
            \end{align*}
        \end{lemma}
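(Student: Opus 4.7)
The plan is to prove the characterization of $\ri(A)$ first and then deduce the description of $\rb(A)$. Since $A$ is the continuous image of the compact standard simplex $\Delta_k = \{\mu \in \R^k : \mu_i \geq 0, \sum_{i=1}^k \mu_i = 1\}$ under the affine map $\phi : \R^k \to \R^n$, $\mu \mapsto \sum_{i=1}^k \mu_i a^i$, we have $A$ compact and in particular $\cl(A) = A$, so $\rb(A) = A \setminus \ri(A)$. Given the claimed formula for $\ri(A)$, the formula for $\rb(A)$ is then just its set-theoretic complement in $A$.

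For the inclusion ``$\supseteq$'' in the characterization of $\ri(A)$, let $a = \sum_{i=1}^k \lambda_i a^i$ with $\lambda_i > 0$ and $\sum_i \lambda_i = 1$. Denote $H = \{\mu \in \R^k : \sum_i \mu_i = 1\}$, which is an affine subspace of $\R^k$. I would first observe that the restriction $\phi|_H : H \to \aff(A)$ is a well-defined surjective affine map between finite-dimensional affine spaces, hence open (this is standard: factor $\phi|_H$ through $H / \ker(\phi|_H - \phi(\lambda))$ to obtain a bijective affine map, whose inverse is continuous by finite-dimensionality). The point $\lambda$ lies in $\{\mu \in H : \mu_i > 0 \text{ for all } i\}$, which is open in $H$, so $a = \phi(\lambda)$ has an open neighborhood in $\aff(A)$ contained in $\phi(\Delta_k) = A$, i.e.\ $a \in \ri(A)$.

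For the inclusion ``$\subseteq$'', the key trick is to push a point in $\ri(A)$ slightly ``outwards'' from the centroid $c := \frac{1}{k}\sum_{i=1}^k a^i$ and recombine. Given $a \in \ri(A)$ and a representation $a = \sum_i \lambda_i a^i$ with $\lambda \in \Delta_k$, by Definition~\ref{def:relative_interior} there is an open neighborhood $U$ of $a$ in $\R^n$ with $U \cap \aff(A) \subseteq A$. For $t > 0$ small enough, $y := a + t(a - c) \in U \cap \aff(A) \subseteq A$, so $y = \sum_i \mu_i a^i$ for some $\mu \in \Delta_k$. Solving for $a$ gives
\begin{equation*}
a \;=\; \frac{1}{1+t}\, y \,+\, \frac{t}{1+t}\, c \;=\; \sum_{i=1}^k \left( \frac{\mu_i}{1+t} + \frac{t}{k(1+t)} \right) a^i,
\end{equation*}
and every coefficient is bounded below by $\tfrac{t}{k(1+t)} > 0$, while they sum to $1$. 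This produces the desired representation.

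The main obstacle is the openness of the affine map $\phi|_H$, which is the technical heart of the ``$\supseteq$'' direction; everything else is routine convex-combination bookkeeping. Once openness is in place, both inclusions are short, and the relative boundary formula drops out by taking complements in $A$.
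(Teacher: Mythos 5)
Your proof is correct. Note that the paper does not actually prove this lemma --- it only cites Exercise 3.1 in \cite{B1983} --- so you have supplied a genuine argument where the paper defers to a textbook. Both of your inclusions check out: for ``$\supseteq$'' the surjectivity of $\phi|_H$ onto $\aff(A)$ holds because $\aff(\conv(\{a^1,\dots,a^k\})) = \aff(\{a^1,\dots,a^k\})$, and openness of a surjective affine map between finite-dimensional affine spaces is indeed standard (your factorization through the quotient by the kernel is the usual justification); for ``$\subseteq$'' the computation $a = \frac{1}{1+t}y + \frac{t}{1+t}c$ with $y = a + t(a-c) \in U \cap \aff(A) \subseteq A$ yields coefficients bounded below by $\frac{t}{k(1+t)} > 0$ and summing to $1$, exactly as claimed. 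The reduction of the $\rb(A)$ formula to the $\ri(A)$ formula via compactness of $A$ (so $\cl(A) = A$ and $\rb(A) = A \setminus \ri(A)$) is also correct and matches Definition \ref{def:relative_interior}. The only cosmetic remark is that in the ``$\subseteq$'' direction you introduce a representation $a = \sum_i \lambda_i a^i$ that you never use; the argument only needs the representation $\mu$ of the perturbed point $y$.
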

        \begin{proof}
            Exercise 3.1 in \cite{B1983}.
            %oder Theorem 0.21 in https://www3.math.tu-berlin.de/combi/wp_henk/wp-content/uploads/2012/10/notes_discrete_analytic_convexgeometry_ws122.pdf
            %oder Remark 2.1.4 in HL1993
        \end{proof}

        For example, for a line connecting two points $a^1, a^2 \in \R^n$, $a^1 \neq a^2$, the relative boundary is the set containing $a^1$ and $a^2$ and the relative interior is the line without the end points, i.e., the set $\{ \lambda a^1 + (1-\lambda) a^2 : \lambda \in (0,1) \}$.

\section{The structure of the regularization path} \label{sec:structure_of_regularization_path}
    
    Let $f : \R^n \rightarrow \R$ be continuously differentiable and $g : \R^n \rightarrow \R$ be $PC^1$. For a \emph{regularization parameter} $\lambda \geq 0$, consider the parameter-dependent problem
    \begin{align} \label{eq:regularization_problem}
        \min_{x \in \R^n} f(x) + \lambda g(x).
    \end{align}
    The set
    \begin{align} \label{eq:regularization_path}
        R := \left\{ \bar{x} \in \R^n : \exists \lambda \geq 0 \ \text{with} \ \bar{x} \in \argmin_{x \in \R^n} f(x) + \lambda g(x) \right\}
    \end{align}
    is known as the \emph{regularization path} of \eqref{eq:regularization_problem} \cite{HRT2004,PH2007,MY2012} and the goal of this article is to analyze its structure. 
    
    We will do this by not analyzing $R$ directly, but by analyzing the (potentially larger) set that is defined by the first-order optimality condition of \eqref{eq:regularization_problem}: If $\bar{x}$ is a solution of \eqref{eq:regularization_problem} for some $\lambda \geq 0$, then it is a \emph{critical point} of $f + \lambda g$, i.e., $0 \in \partial (f + \lambda g)(\bar{x})$ (cf.\ Theorem 4.1 in \cite{BKM2014}). This is the motivation for defining the \emph{critical regularization path}
    \begin{align} \label{eq:critical_regularization_path}
        R_c := \left\{ \bar{x} \in \R^n : \exists \lambda \geq 0 \ \text{with} \ 0 \in \partial (f + \lambda g)(\bar{x}) \right\}.
    \end{align}   
    In general we have $R \subseteq R_c$. If $f + \lambda g$ is convex (e.g., if both $f$ and $g$ are convex), then criticality is sufficient for optimality (cf.\ Theorem 4.2 in \cite{BKM2014}), so $R = R_c$. For example, this is the case for the Lasso problem \cite{T1996} (where $f$ contains some least squares error and $g$ is the $\ell^1$-norm) and total variation denoising \cite{C2004} (where $f$ contains some least squares error and $g$ is the total variation).
    
    Our main result in this section will be that $R_c$ has a piecewise smooth structure. More precisely, we will derive five conditions (Assumptions \ref{assum:A1} to \ref{assum:A5}) for a point $x^0 \in R_c$ which, when combined, assure that locally around $x^0$, $R_c$ is the projection of a smooth manifold from a higher-dimensional space onto $\R^n$. In turn, these assumptions allow for a classification of kinks of $R_c$ by checking which assumption is violated. Throughout this article, we will use the term \emph{kinks} to loosely refer to points in $R_c$ around which $R_c$ is not a smooth manifold.
    
    In order to analyze the structure of $R_c$, we first show that $R_c$ is related to the Pareto critical set $P_c$ of the MOP
    \begin{align} \label{eq:MOP}
        \min_{x \in \R^n} 
        \begin{pmatrix}
            f(x) \\
            g(x)
        \end{pmatrix}.
    \end{align}
    More precisely, we have the following lemma.
    
    \begin{lemma} \label{lem:reg_path_critical_set}
        It holds:
        \begin{enumerate}
            \item[a)] $R_c = \{ \bar{x} \in \R^n : \exists \xi \in \partial g(\bar{x}), \alpha_1 > 0, \alpha_2 \geq 0 \ \text{with} \ \alpha_1 \nabla f(\bar{x}) + \alpha_2 \xi = 0 \ \text{and} \ \alpha_1 + \alpha_2 = 1 \} \subseteq P_c$.
            \item[b)] $R_c \cup \{ x \in \R^n : 0 \in \partial g(x) \} = P_c$.
        \end{enumerate}
    \end{lemma}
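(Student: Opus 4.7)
The plan is to exploit the fact that, since $f$ is continuously differentiable, the sum rule for the Clarke subdifferential gives the clean identity $\partial(f+\lambda g)(\bar x) = \{\nabla f(\bar x)\} + \lambda \,\partial g(\bar x)$ for every $\lambda \ge 0$ (this is a standard property of the Clarke calculus, see, e.g., \cite{BKM2014}). Using this, the defining condition $0\in\partial(f+\lambda g)(\bar x)$ of $R_c$ becomes: there exists $\xi\in\partial g(\bar x)$ with $\nabla f(\bar x) + \lambda\xi = 0$. So the entire proof is really an exercise in rewriting this single equation.

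For part (a), I would prove the set equality by showing both inclusions via an explicit reparameterization. For ``$\subseteq$'', starting from $\lambda\ge 0$ and $\xi\in\partial g(\bar x)$ with $\nabla f(\bar x)+\lambda\xi=0$, set
\begin{equation*}
  \alpha_1 := \frac{1}{1+\lambda} > 0, \qquad \alpha_2 := \frac{\lambda}{1+\lambda} \ge 0,
\end{equation*}
so that $\alpha_1+\alpha_2=1$ and dividing by $1+\lambda$ yields $\alpha_1\nabla f(\bar x)+\alpha_2\xi=0$. For ``$\supseteq$'', given $\alpha_1>0$, $\alpha_2\ge 0$ with $\alpha_1+\alpha_2=1$ and $\alpha_1\nabla f(\bar x)+\alpha_2\xi=0$, simply set $\lambda := \alpha_2/\alpha_1\ge 0$, divide by $\alpha_1$, and read off $0\in\partial(f+\lambda g)(\bar x)$. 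The inclusion $R_c\subseteq P_c$ then follows immediately: any $\bar x\in R_c$ admits a representation as in \eqref{eq:KKT_convex_comb} with $\xi^1=\nabla f(\bar x)\in\partial f(\bar x)$ and $\alpha_1>0$, which is in particular a valid (stronger) instance of \eqref{eq:KKT_convex_comb}.

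For part (b), I would use the same characterization of $P_c$ via \eqref{eq:KKT_convex_comb} and split on the sign of $\alpha_1$. If $\bar x\in P_c$ with representation $(\alpha_1,\alpha_2,\xi)$ and $\alpha_1>0$, then by part (a) we have $\bar x\in R_c$. If instead $\alpha_1=0$, then $\alpha_2=1$ and $\alpha_2\xi=0$ forces $0=\xi\in\partial g(\bar x)$, so $\bar x$ lies in the second set of the union. This proves ``$\supseteq$''. Conversely, any $\bar x\in R_c$ is in $P_c$ by part (a), and any $\bar x$ with $0\in\partial g(\bar x)$ is Pareto critical via the trivial choice $\alpha_1=0,\alpha_2=1,\xi=0$ in \eqref{eq:KKT_convex_comb}, giving ``$\subseteq$''.

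There is no real obstacle here; the only subtle point worth stating explicitly is the sum rule for the Clarke subdifferential in the smooth-plus-locally-Lipschitz case, which is where continuity differentiability of $f$ enters, together with the fact that scaling by $\lambda\ge 0$ commutes with $\partial g$ (for $\lambda=0$ one has $0\in\partial(f+0\cdot g)(\bar x)$ iff $\nabla f(\bar x)=0$, which is consistent with the representation $\alpha_1=1,\alpha_2=0$ in part (a)).
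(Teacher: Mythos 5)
Your proposal is correct and follows essentially the same route as the paper: the sum rule $\partial(f+\lambda g)(\bar x)=\nabla f(\bar x)+\lambda\,\partial g(\bar x)$, the reparameterization $\alpha_1=\tfrac{1}{1+\lambda}$, $\lambda=\tfrac{\alpha_2}{\alpha_1}$ for part (a), and the case split $\alpha_1=0$ versus $\alpha_1>0$ for part (b). The only (harmless) difference is that you spell out the trivial inclusion $\{x:0\in\partial g(x)\}\subseteq P_c$, which the paper leaves implicit.
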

    \begin{proof}
        (a) Since $f$ is continuously differentiable we have $\partial f(x) = \{ \nabla f(x) \}$ for all $x \in \R^n$. Furthermore, from basic calculus for subdifferentials (cf.\ Corollary 1 in \cite{C1990}, Section 2.3) it follows that $\bar{x} \in R_c$ is equivalent to 
        \begin{equation} \label{eq:lem_reg_path_critical_set_1}
            \begin{aligned} 
                &\exists \lambda \geq 0 : 0 \in \partial (f + \lambda g)(\bar{x}) = \partial f(\bar{x}) + \lambda \partial g(\bar{x}) = \nabla f(\bar{x}) + \lambda \partial g(\bar{x}) \\
                \Leftrightarrow \ &\exists \lambda \geq 0 : 0 \in \frac{1}{1 + \lambda} \nabla f(\bar{x}) + \frac{\lambda}{1 + \lambda} \partial g(\bar{x}) \\
                \Leftrightarrow \ &\exists \xi \in \partial g(\bar{x}), \lambda \geq 0 : \frac{1}{1 + \lambda} \nabla f(\bar{x}) + \frac{\lambda}{1 + \lambda} \xi = 0 \\
                \Leftrightarrow \ &\exists \xi \in \partial g(\bar{x}), \alpha_1 > 0, \alpha_2 \geq 0 : \alpha_1 \nabla f(\bar{x}) + \alpha_2 \xi = 0 \ \text{and} \ \alpha_1 + \alpha_2 = 1.
            \end{aligned}
        \end{equation}
        By \eqref{eq:KKT_convex_comb} this implies $\bar{x} \in P_c$. \\
        (b) Due to (a) we only have to show the implication ``$\supseteq$'', so let $\bar{x} \in P_c$. By \eqref{eq:KKT_convex_comb} there are $\xi \in \partial g(\bar{x})$ and $\alpha_1, \alpha_2 \geq 0$ with $\alpha_1 + \alpha_2 = 1$ and $\alpha_1 \nabla f(\bar{x}) + \alpha_2 \xi = 0$. If $\alpha_1 = 0$ then $\alpha_2 = 1$, so $0 = \xi \in \partial g(\bar{x})$. Otherwise, $\alpha_1 > 0$ and from \eqref{eq:lem_reg_path_critical_set_1} it follows that $\bar{x} \in R_c$ (with $\lambda =\frac{\alpha_2}{\alpha_1}$).
        % \ \\
        % it follows that 
        % \begin{align*}
        %     &\exists \xi \in \partial g(\bar{x}), \alpha_1 > 0, \alpha_2 \geq 0 : \nabla f(\bar{x}) + \frac{\alpha_2}{\alpha_1} \xi = 0 \ \text{and} \ \alpha_1 + \alpha_2 = 1 \\
        %     &\Leftrightarrow \ 0 \in  \nabla f(\bar{x}) + \frac{\alpha_2}{\alpha_1} \partial g(\bar{x}) = \partial \left(f + \frac{\alpha_2}{\alpha_1} g \right)(\bar{x}),
        % \end{align*}
        % %\Leftrightarrow \ &\exists \xi \in \partial g(\bar{x}), \lambda \geq 0 : \nabla f(\bar{x}) + \lambda \xi = 0 \\
        % so $\bar{x}$ is a critical point of \eqref{eq:regularization_problem} with parameter $\lambda =\frac{\alpha_2}{\alpha_1}$.
    \end{proof}
    
    By the previous lemma, $R_c$ and $P_c$ coincide up to critical points of $g$ in which all KKT multipliers corresponding to $f$ are zero. Roughly speaking, these points correspond to ``$\lambda = \infty$'' in \eqref{eq:regularization_problem}. 
    
    \begin{remark} \label{rem:g_crit_R_c}
        It is important to note that Lemma \ref{lem:reg_path_critical_set} does not imply that critical points of $g$ are not contained in $R_c$, i.e., that $R_c \cap \{ x \in \R^n : 0 \in \partial g(x) \} = \emptyset$. For example, if $0 \in \interior(\partial g(x))$, then it is possible to show that there is some $\bar{\lambda}$ with $0 \in \partial (f + \lambda g)(x)$ for all $\lambda \geq \bar{\lambda}$.
    \end{remark}
    
    By Lemma \ref{lem:reg_path_critical_set}, structural results about Pareto critical sets can be used to analyze the structure of the critical regularization path $R_c$. For example, under some mild regularity assumptions on $f$ and $g$, Theorem 5.1 in \cite{H2001} shows that in areas where $g$ is (twice continuously) differentiable, the set of Pareto critical points with non-vanishing KKT multipliers is the projection of a $1$-dimensional manifold from $\R^{n+2}$ onto $\R^n$. To derive our main result, we will extend the ideas in \cite{H2001} to the whole Pareto critical set up to certain kinks.
    
    We begin by taking a closer look at the Pareto critical set $P_c$ of \eqref{eq:MOP}.
    By definition, $P_c$ is characterized by the optimality condition \eqref{eq:KKT}. Since $f$ is continuously differentiable and $g$ is $PC^1$, the subdifferential of $f$ is simply its gradient, and the subdifferential of $g$ is the convex hull of all essentially active selection functions (cf.\ Lemma \ref{lem:PC_subdiff}). Thus, for a fixed $x \in \R^n$, \eqref{eq:KKT} is equivalent to the existence of a vanishing convex combination of a finite number of elements. This is the same type of condition as in the smooth case, except that there is now no continuous dependency of these elements on $x$. Furthermore, the number of elements is not constant. Nonetheless, by iterating over all possible essentially active sets, $P_c$ can at least be written as the union of sets that behave similarly to Pareto critical sets in the smooth case. Let $\{ g_1, \dots, g_k \}$ be a set of selection functions of $g$. Then formally, these considerations lead to the following decomposition of $P_c$:
    \begin{align} \label{eq:Pc_for_fixed_I}
        P_c &= \{ x \in \R^n : 0 \in \conv(\{ \nabla f(x) \} \cup \partial g(x) ) \} \nonumber \\
        &= \{ x \in \R^n : 0 \in \conv(\{ \nabla f(x) \} \cup \{ \nabla g_i(x) : i \in I^e(x) \} ) \} \nonumber \\
        &= \bigcup_{I \subseteq \{1,\dots,k\}} P_c^I \cap \Omega^I,
    \end{align}
    where
    \begin{equation} \label{eq:def_PcI_OmegaI}
        \begin{aligned}
            P_c^I &:= \{ x \in \R^n : 0 \in \conv(\{ \nabla f(x) \} \cup \{ \nabla g_i(x) : i \in I \} ) \}, \\
            \Omega^I &:= \{ x \in \R^n : I^e(x) = I \}.
        \end{aligned}
    \end{equation}
    In words, $P_c^I$ is the Pareto critical set of the (smooth) MOP with objective vector $(f,g_{i_1}, \dots g_{i_{|I|}})^\top$ (for $I = \{i_1, ..., i_{|I|} \}$) and $\Omega^I$ is the set of points in $\R^n$ in which precisely the selection functions with an index in $I$ are essentially active. Thus, \eqref{eq:Pc_for_fixed_I} expresses $P_c$ as the union of Pareto critical sets of smooth MOPs that are intersected with the sets of points with constant essentially active sets. A visualization of this decomposition is shown in the following example.
    
    \begin{example} \label{example:simple_kink}
        Consider problem \eqref{eq:MOP} for $f : \R^2 \rightarrow \R$, $x \mapsto (x_1 - 2)^2 + (x_2 - 1)^2$, and
        \begin{align*}
            &g_1 : \R^2 \rightarrow \R, \quad x \mapsto x_1 + x_2, \\
            &g_2 : \R^2 \rightarrow \R, \quad x \mapsto x_1 - x_2, \\
            &g_3 : \R^2 \rightarrow \R, \quad x \mapsto -x_1 + x_2, \\
            &g_4 : \R^2 \rightarrow \R, \quad x \mapsto -x_1 - x_2, \\
            &g : \R^2 \rightarrow \R, \quad x \mapsto \max(\{g_1(x),g_2(x),g_3(x),g_4(x)\}) = \| x \|_1.
        \end{align*}
        It is possible to show that the Pareto critical (and in this case Pareto optimal) set is given by 
        \begin{align*}
            P_c &= \{ (0,0)^\top \} \cup ((0,1] \times \{ 0 \}) \cup \{ x \in \R^2: x_1 \in (1,2], x_2 = x_1 - 1 \} \\
            &= (P_c^{\{ 1,2,3,4 \}} \cap \Omega^{\{ 1,2,3,4 \}}) \cup (P_c^{\{ 1,2 \}} \cap \Omega^{\{ 1,2 \}}) \cup (P_c^{\{ 1 \}} \cap \Omega^{\{ 1 \}}) .
        \end{align*}
        Figure \ref{fig:simple_kink} shows the decomposition of $P_c$ into the sets $P_c^I \cap \Omega^I$ as in \eqref{eq:Pc_for_fixed_I}.
    	\begin{figure}
    		\centering
    		\includegraphics[width=0.55\textwidth]{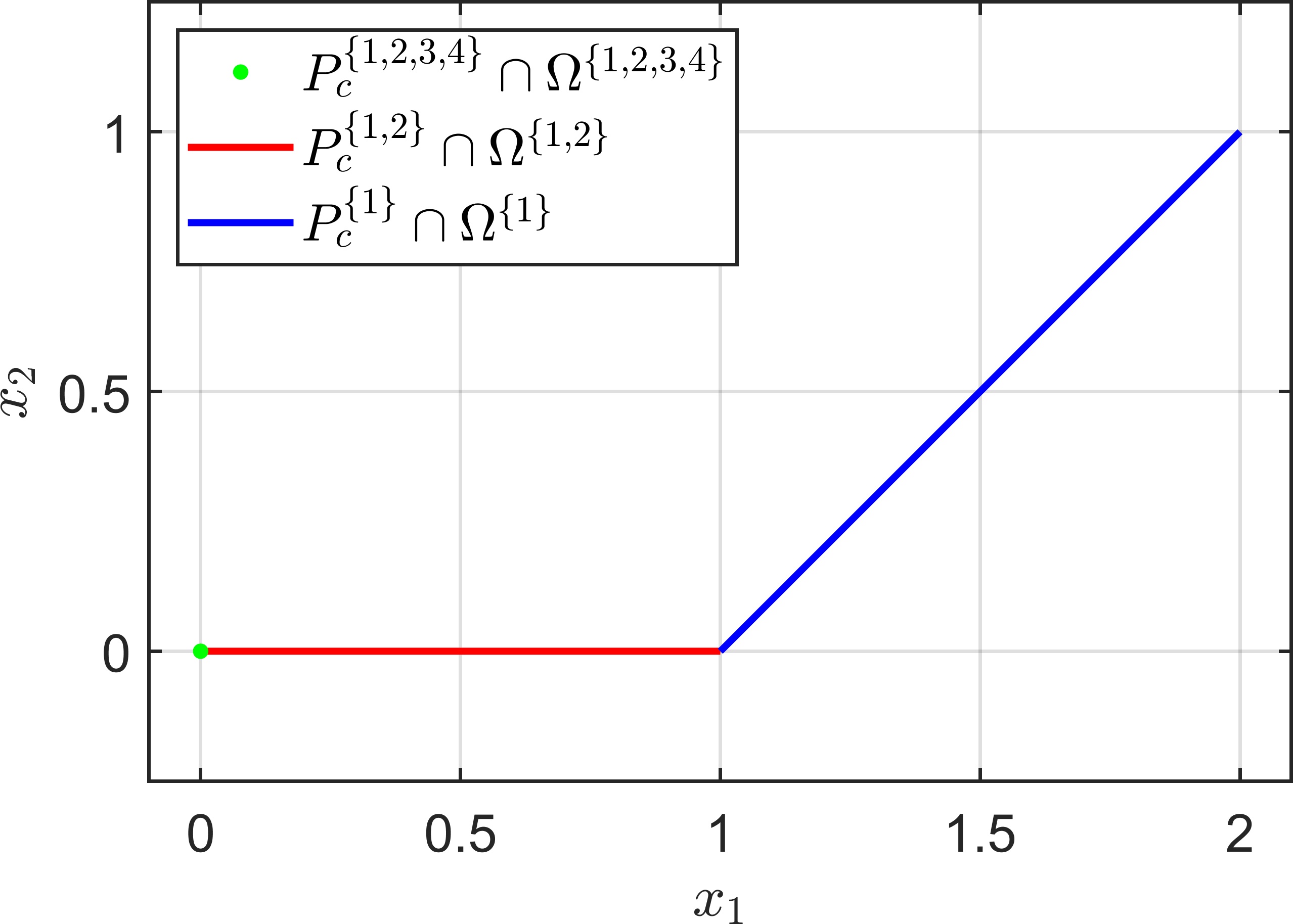}
    		\caption{Decomposition of $P_c$ into the sets $P_c^I \cap \Omega^I$ as in \eqref{eq:Pc_for_fixed_I}.}
    		\label{fig:simple_kink}
    	\end{figure}
    \end{example}
    
    We will analyze the piecewise smooth structure of $P_c$ via \eqref{eq:Pc_for_fixed_I} by first analyzing $\Omega^I$, then the intersection $P_c^I \cap \Omega^I$ and finally the union over all $P_c^I \cap \Omega^I$. Furthermore, as we expect $P_c$ to possess kinks, we will only consider its local structure around a given point. In other words, for $x^0 \in P_c$, we will only consider the structure of $P_c \cap U$ for open neighborhoods $U \subseteq \R^n$ of $x^0$.
    
    The strategy for our analysis in this section is to derive assumptions for $x^0$ which are sufficient for $P_c$ to have a smooth structure locally around $x^0$. These assumptions represent different sources and types of nonsmoothness of $P_c$ and will allow for a classification of nonsmooth points.
    
    \subsection{The structure of \texorpdfstring{$\Omega^I$}{OmegaI}}
        
        By definition, the set $\Omega^I$ only depends on $g$. For $I = \{i\} \subseteq \{1,...,k\}$, $\Omega^{\{ i \}}$ is the set of points where only the selection function $g_i$ is essentially active. From Lemma \ref{lem:local_ess_active} it follows that $\Omega^{\{ i \}}$ is an open subset of $\R^n$ in this case. For $I \subseteq \{1,...,k\}$ with $|I| > 1$, $\Omega^I$ is the set of points where precisely the selection functions corresponding to the elements of $I$ are essentially active. Typically (but not necessarily), these are points where $g$ is nonsmooth, which by Rademacher's Theorem (\cite{EG2015}, Theorem 3.2) form a null set. In the following, we will analyze its structure.
    
        Since we are only interested in the structure of $\Omega^I$ in a local sense, we also only have to consider restrictions $g|_U$ of $g$ to open neighborhoods of a point $x^0 \in \R^n$. 
        In terms of the open neighborhood $U$ of $x^0$ and the set of selection functions of $g|_U$, we introduce the following assumption:
        
        \begin{assumption} \label{assum:A1}
            For $x^0 \in \R^n$ there is an open neighborhood $U \subseteq \R^n$ of $x^0$ and a set of selection functions $\{ g_1, \dots, g_k \}$ of $g|_U$ such that            
            \begin{enumerate}
                \item[(i)] $I(x^0) = \{1,\dots,k\}$,
                \item[(ii)] $I^e(x) = I(x) \quad \forall x \in U$,
                \item[(iii)] $\affdim(\aff(\{\nabla g_i(x) : i \in \{1,\dots,k \} \}))\\ \hspace*{2cm} = \affdim(\aff(\{ \nabla g_i(x^0) : i \in \{1,\dots,k \} \})) \ \forall x \in U$.
            \end{enumerate}
        \end{assumption}
        
        Assumption \ref{assum:A1} can be interpreted as follows: \ref{assum:A1}(i) ensures that all selection functions we consider are actually relevant for the representation of $g$ in $U$. The condition \ref{assum:A1}(ii) ensures that it does not matter if we consider the active or the essentially active set in $U$, which allows for an easier representation of $\Omega^I$. Finally, \ref{assum:A1}(iii) makes sure that the representation of $\partial g(x^0)$ via the gradients of our selection functions is ``stable'' on $U$ with respect to its affine dimension.
        
        In the following, we will discuss the restrictiveness of Assumption \ref{assum:A1}. By \eqref{eq:g_local_sel_functions}, \ref{assum:A1}(i) can always be satisfied by choosing $U$ sufficiently small. For \ref{assum:A1}(ii) and (iii), we consider the following example.
        
        % Alternativ sowas wie max(x_1,x_1 - x_1*abs(x_2)) für a) oder max(x_1, x_1 x_2^2) für b)
        \begin{example} \label{example:structure_OmegaI}
            \begin{itemize}
                \item[a)] Let
                \begin{align*}
                    &g_1 : \R^2 \rightarrow \R, \quad x \mapsto x_2^2 - x_1, \\
                    &g_2 : \R^2 \rightarrow \R, \quad x \mapsto
                    \begin{cases}
                        x_1^2 - x_1, &x_1 \leq 0, \\
                        -x_1, &x_1 > 0,
                    \end{cases} \\
                    &g : \R^2 \rightarrow \R, \quad x \mapsto \max(\{ g_1(x), g_2(x) \}).
                \end{align*}
                Then $g$ is $PC^1$ with selection functions $g_1$ and $g_2$. The graph and the level sets of $g$ are shown in Figure \ref{fig:example_A1}. 
                \begin{figure}
    				\centering
    	            \subfloat[]{\includegraphics[width=0.475\textwidth]{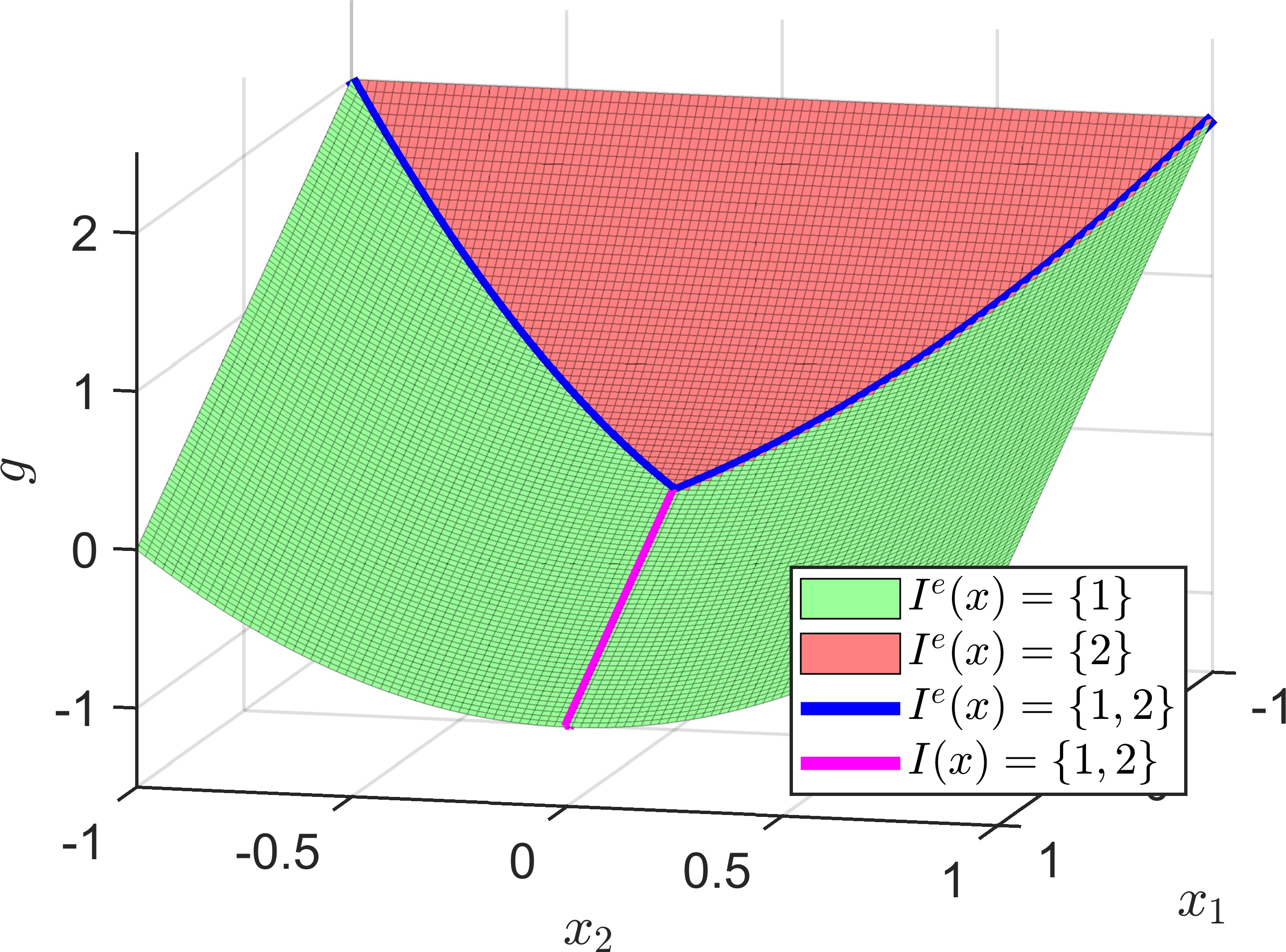}}
    	            \hfill
    	            \subfloat[]{\includegraphics[width=0.4\textwidth]{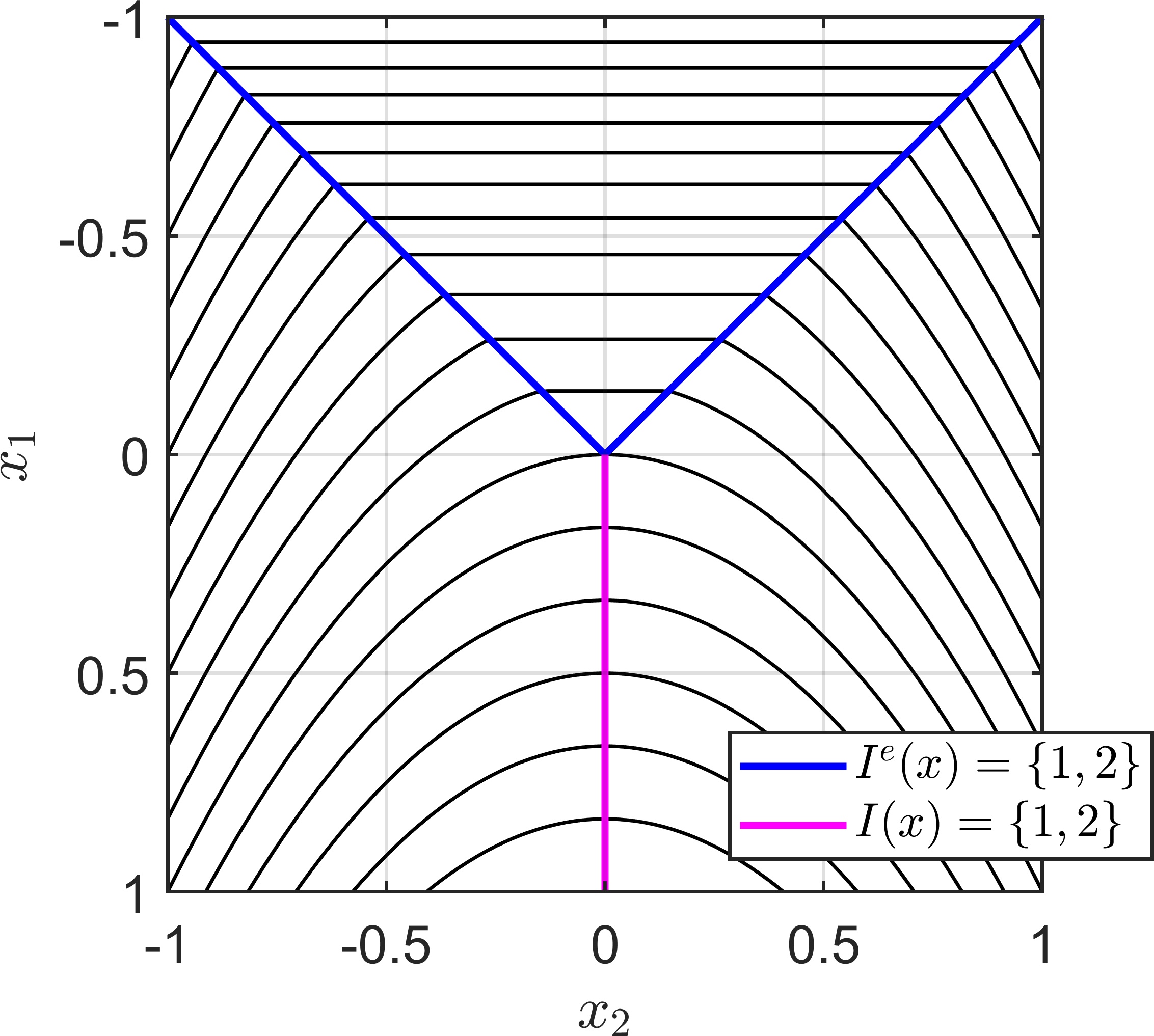}}
    				\caption{(a) The graph of the $PC^1$-function $g$ in Example \ref{example:structure_OmegaI} a). (b) The level sets of $g$.}
    				\label{fig:example_A1}
    			\end{figure}
                For the activity of $g_2$ we have
                \begin{align*}
                    2 \in I(x) \ \Leftrightarrow \ g(x) = g_2(x) \ \Leftrightarrow \ 
                    \begin{cases}
                        x_2 \in [x_1,-x_1], &x_1 \leq 0, \\
                        x_2 = 0, &x_1 > 0,
                    \end{cases}
                \end{align*}
                and
                \begin{align*}
                    2 \in I^e(x) \ &\Leftrightarrow \ x \in \cl(\interior(\{ y \in \R^2 : g(y) = g_2(y) \} )) \\ &\Leftrightarrow \  x_1 \leq 0,\ x_2 \in [x_1,-x_1].
                \end{align*}
                Thus, for any open neighborhood $U \subseteq \R^2$ of $x^0 = (0,0)^\top$, there is some $x \in U$ with $I^e(x) \neq I(x)$. In other words, \ref{assum:A1}(ii) does not hold in $x^0$ for this set of selection functions. But note that in this case, this can easily be fixed by modifying the behavior of $g_2$ for $x_1 > 0$. For example, replacing $g_2$ by
                \begin{align*}
                    \tilde{g}_2 : \R^2 \rightarrow \R, \quad x \mapsto
                    \begin{cases}
                        x_1^2 - x_1, &x_1 \leq 0, \\
                        -x_1^2 - x_1, &x_1 > 0.
                    \end{cases}
                \end{align*}
                solves the issue.
                \item[b)] For the selection functions $g_1$ and $\tilde{g}_2$ of $g$ as in a), we have 
                \begin{align*}
                    \nabla g_1(x) = 
                    \begin{pmatrix}
                        -1 \\
                        2 x_2
                    \end{pmatrix}
                    \ \text{and} \
                    \nabla \tilde{g}_2(x) = 
                    \begin{cases}
                        (2 x_1 - 1,0)^\top, &x_1 \leq 0, \\
                        (-2 x_1 - 1,0)^\top, &x_1 > 0.
                    \end{cases}
                \end{align*}
                In particular, in $x^0 = (0,0)^\top$ we have $\nabla g_1(x^0) = \nabla \tilde{g}_2(x^0) = (-1,0)^\top$, so 
                \begin{align*}
                     \affdim(\aff(\{ \nabla g_1(x^0), \nabla \tilde{g}_2(x^0) \})) = 0.
                \end{align*}
                But it is easy to see that
                \begin{align*}
                    \affdim(\aff(\{ \nabla g_1(x), \nabla \tilde{g}_2(x) \})) = 1 \quad \forall x \in \R^2 \setminus \{ 0\}.
                \end{align*}
                In particular, \ref{assum:A1}(iii) does not hold in $x^0$ (for this set of selection functions).
            \end{itemize}
        \end{example}
        
        By Lemma \ref{lem:local_ess_active}, for a given $x^0 \in \R^n$, we can always choose the open neighborhood $U$ of $x^0$ such that all selection functions of the local restriction $g|_U$ of $g$ are essentially active in $x^0$. In particular, we can assume that $I^e(x^0) = I(x^0)$. While this does not imply that (ii) holds in Assumption \ref{assum:A1}, the previous example shows how \ref{assum:A1}(ii) may be satisfied through modifications of the selection functions in areas where they are active, but not essentially active. Although we will not prove that this is always possible, it motivates us to believe that \ref{assum:A1}(ii) is not a strong assumption in practice.
        
        In contrast to \ref{assum:A1}(ii), modifying the selection functions will have less impact on \ref{assum:A1}(iii). The reason for this is the fact that if \ref{assum:A1}(i) and \ref{assum:A1}(ii) hold, then the right-hand side of \ref{assum:A1}(iii) is the dimension of the affine hull of the subdifferential of $g$ in $x^0$ (cf.\ Lemma \ref{lem:PC_subdiff}). In particular, the right-hand side does not depend on the choice of selection functions. In light of this, \ref{assum:A1}(iii) implies that the dimension of the affine hull of the subdifferential of $g$ is constant in all $x \in U$ with $I^e(x) = I^e(x^0)$, i.e., in all  $x \in \Omega^{I^e(x^0)}$ (cf.\ \eqref{eq:def_PcI_OmegaI}). Thus, \ref{assum:A1}(iii) is more related to the function $g$ and less related to the choice of selection functions. In Example \ref{example:structure_OmegaI} a), we see that the set $\Omega^{\{1,2\}}$ (in blue) has a kink in $x^0 = (0,0)^\top$. The following lemma suggests that this is caused by \ref{assum:A1}(iii) being violated. Thus, by assuming \ref{assum:A1}(iii), we limit ourselves to local restrictions $g|_U$ for which $\Omega^{I^e(x^0)}$ has a smooth structure.
        \begin{lemma} \label{lem:structure_OmegaI}
            Let $x^0 \in \R^n$. Let $U \subseteq \R^n$ be an open neighborhood of $x^0$ and let
            $\{ g_1, \dots, g_k \}$ be a set of selection functions of $g|_U$ as in Assumption \ref{assum:A1}. Let $d = \affdim(\aff(\partial g(x^0)))$ and let $\{ i_1, \dots, i_{d+1} \} \subseteq \{ 1, \dots, k \}$ such that $\{ \nabla g_i(x^0) : i \in \{ i_1, \dots, i_{d+1} \} \}$ is an affine basis of $\aff(\{ \nabla g_i(x^0) : i \in \{ 1, \dots, k \} \})$. Then there is an open neighborhood $U' \subseteq U$ of $x^0$ such that
            \begin{align*}
                g_i(x) - g_1(x) = 0 \ \forall i \in \{2,\dots,k\} \quad \Leftrightarrow \quad g_i(x) - g_{i_1}(x) = 0 \ \forall i \in \{i_2,\dots,i_{d+1}\}
            \end{align*}
            for all $x \in U'$ and $\Omega^{\{1,\dots,k\}} \cap U'$ is an embedded $(n-d)$-dimensional submanifold of $U'$. In particular,
            \begin{align*}
                \Omega^{\{1,\dots,k\}} \cap U' = \{ x \in U' : g_i(x) - g_{i_1}(x) = 0 \ \forall i \in \{i_2,\dots,i_{d+1}\} \}.
            \end{align*}
        \end{lemma}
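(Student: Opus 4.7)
The strategy is to identify $\Omega^{\{1,\dots,k\}} \cap U$ as a zero set of $C^1$ functions, then reduce the redundant system of $k-1$ equations to a non-redundant one of $d$ equations via the chosen affine basis, and finally apply the submersion theorem. First note that by \ref{assum:A1}(ii) we have $\Omega^{\{1,\dots,k\}} \cap U = \{ x \in U : I(x) = \{1,\dots,k\}\} = \{ x \in U : g_1(x) = \dots = g_k(x)\}$, which gives the easy inclusion $\Omega^{\{1,\dots,k\}} \cap U \subseteq \{ x \in U : g_{i_l}(x) = g_{i_1}(x) \ \forall l \in \{2,\dots,d+1\}\}$ and takes care of the ``$\Rightarrow$'' in the displayed equivalence.

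Define $H : U \to \R^d$ by $H(x) := (g_{i_2}(x) - g_{i_1}(x), \dots, g_{i_{d+1}}(x) - g_{i_1}(x))$. Since $\{\nabla g_{i_l}(x^0) : l \in \{1,\dots,d+1\}\}$ is an affine basis, the vectors $\{\nabla g_{i_l}(x^0) - \nabla g_{i_1}(x^0) : l \in \{2,\dots,d+1\}\}$ are linearly independent, so $DH(x^0)$ has rank $d$. By continuity of the gradients, this rank is maintained on an open neighborhood $U' \subseteq U$ of $x^0$, so by the submersion theorem $H^{-1}(0) \cap U'$ is an embedded $(n-d)$-submanifold of $U'$; after shrinking $U'$ we may additionally assume that $H^{-1}(0) \cap U'$ is diffeomorphic to an open ball, hence path-connected. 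Moreover, \ref{assum:A1}(iii) combined with continuity implies that on $U'$ the set $\{\nabla g_{i_l}(x) : l \in \{1,\dots,d+1\}\}$ remains an affine basis of the $d$-dimensional affine space $\aff(\{\nabla g_i(x) : i \in \{1,\dots,k\}\})$.

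It remains to prove $H^{-1}(0) \cap U' \subseteq \Omega^{\{1,\dots,k\}}$, which yields the ``$\Leftarrow$'' direction and the final identity. By the affine basis property on $U'$ and Lemma \ref{lem:affine_coeff_unique}, for every $x \in U'$ and every $j \in \{1,\dots,k\}$ there exist unique coefficients $\lambda_1^j(x), \dots, \lambda_{d+1}^j(x) \in \R$ with $\sum_l \lambda_l^j(x) = 1$ and $\nabla g_j(x) = \sum_{l=1}^{d+1} \lambda_l^j(x) \nabla g_{i_l}(x)$, which can be rewritten as
\[
\nabla g_j(x) - \nabla g_{i_1}(x) = \sum_{l=2}^{d+1} \lambda_l^j(x) \bigl( \nabla g_{i_l}(x) - \nabla g_{i_1}(x) \bigr).
\]
Given $x \in H^{-1}(0) \cap U'$, choose a $C^1$ path $\gamma : [0,1] \to H^{-1}(0) \cap U'$ with $\gamma(0) = x^0$ and $\gamma(1) = x$. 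Then $\gamma'(t) \in \ker DH(\gamma(t))$, i.e., $\gamma'(t)$ is orthogonal to every $\nabla g_{i_l}(\gamma(t)) - \nabla g_{i_1}(\gamma(t))$ for $l \in \{2,\dots,d+1\}$, and hence by the displayed identity also to $\nabla g_j(\gamma(t)) - \nabla g_{i_1}(\gamma(t))$ for every $j \in \{1,\dots,k\}$. Consequently $\tfrac{d}{dt}(g_j - g_{i_1})(\gamma(t)) = 0$; since $(g_j - g_{i_1})(x^0) = 0$ by \ref{assum:A1}(i), integration gives $g_j(x) = g_{i_1}(x)$ for every $j$, so $x \in \Omega^{\{1,\dots,k\}}$.

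The main obstacle is the last step: because $g$ is only $PC^1$, the coefficients $\lambda_l^j(x)$ are merely continuous in $x$, so one cannot simply define a $C^1$ function $\phi_j := g_j - \sum_l \lambda_l^j g_{i_l}$ and argue that it vanishes. The path-integration argument circumvents this by using the affine combination pointwise along $\gamma$, requiring only continuity of the $\lambda_l^j$.
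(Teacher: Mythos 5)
Your proof is correct, but it takes a genuinely different route from the paper's. The paper applies the constant-rank level set theorem (Theorem 5.12 in \cite{L2012}) twice: once to the full map $\varphi = (g_2-g_1,\dots,g_k-g_1)$, whose Jacobian has constant rank $d$ on $U'$ by \ref{assum:A1}(iii), giving that $L=\varphi^{-1}(0)=\Omega^{\{1,\dots,k\}}\cap U'$ is an $(n-d)$-submanifold, and once to the reduced map $\varphi'$ (your $H$), giving $L'=\varphi'^{-1}(0)$; it then concludes from $L\subseteq L'$ and equality of dimensions that $L$ is open in $L'$, so shrinking $U'$ yields $L=L'\cap U'$. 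You instead invoke only the regular-value/submersion theorem for the reduced map and prove the nontrivial inclusion $H^{-1}(0)\cap U'\subseteq\Omega^{\{1,\dots,k\}}$ by hand: since \ref{assum:A1}(iii) forces every $\nabla g_j(x)-\nabla g_{i_1}(x)$ into the span of the chosen basis differences on all of $U'$, you get $\ker DH(x)\subseteq\ker D\varphi(x)$ pointwise, and integrating $g_j-g_{i_1}$ along $C^1$ paths in the (ball-shaped, hence path-connected) level set through $x^0$ shows the remaining $k-1-d$ equations hold automatically; the anchor value $(g_j-g_{i_1})(x^0)=0$ comes from \ref{assum:A1}(i), exactly where the paper uses it too. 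Your version is more elementary (no constant-rank theorem) and more explanatory, as it exhibits concretely why the extra equations are redundant, at the cost of the extra connectedness/path bookkeeping; the paper's version is shorter and gets the equality $L=L'\cap U'$ from an abstract openness argument. One small remark: your closing caveat about the coefficients $\lambda_l^j(x)$ being only continuous is moot --- your argument uses the affine representation purely pointwise along $\gamma$, so no regularity of the coefficients is needed at all.
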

        \begin{proof}
            The direction "$\Rightarrow$" is obvious, so consider the converse. By \ref{assum:A1}(iii) and since the gradients $\nabla g_i$, $i \in \{i_1,\dots,i_{d+1}\}$, are continuous, there is an open neighborhood $U' \subseteq U$ of $x^0$ such that $\{ \nabla g_i(x) : i \in \{i_1,\dots,i_{d+1}\} \}$ is an affine basis of $\{ \nabla g_i(x) : i \in \{1,\dots,k\} \}$ for all $x \in U'$. Let
            \begin{align*}
                \varphi : U' \rightarrow \R^{k-1}, \quad x \mapsto 
                \begin{pmatrix}
                    g_2(x) - g_1(x) \\
                    \vdots \\
                    g_k(x) - g_1(x)
                \end{pmatrix}.
            \end{align*}
            By \ref{assum:A1}(iii) the Jacobian $D\varphi(x)$ has constant rank $d$ for all $x \in U'$. By \ref{assum:A1}(i) we have $\varphi(x^0) = 0$, so the level set $L := \varphi^{-1}(0) = \Omega^{\{1,\dots,k\}} \cap U'$ is nonempty. Thus, by Theorem 5.12 in \cite{L2012}, $L$ is an embedded $(n-d)$-dimensional submanifold of $U'$. Additionally, let
            \begin{align*}
                \varphi' : U' \rightarrow \R^{d}, \quad x \mapsto 
                \begin{pmatrix}
                    g_{i_2}(x) - g_{i_1}(x) \\
                    \vdots \\
                    g_{i_{d+1}}(x) - g_{i_1}(x)
                \end{pmatrix}.
            \end{align*}
            By construction, $D\varphi'(x)$ has constant rank $d$ for all $x \in U'$. With the same argument as above, it follows that $L' := \varphi'^{-1}(0)$ is an embedded $(n-d)$-dimensional submanifold of $U'$ as well. Since $L \subseteq L'$, $L$ is also an embedded $(n-d)$-dimensional submanifold of $L'$ (cf.\ \cite{L2012}, Proposition 4.22). By Proposition 5.1 in \cite{L2012}, this implies that $L$ is an open subset of $L'$. As $L'$ is endowed with the subspace topology of $U' \subseteq \R^n$, this means that we can assume w.l.o.g.\ that $U'$ is an open neighborhood of $x^0$ with $U' \cap L' = L$, completing the proof.
        \end{proof}
    
        By the previous lemma, Assumption \ref{assum:A1} allows us to assume w.l.o.g.\ that for the restriction $g|_U$, the set of points with a constant active set $\Omega^{I^e(x^0)}$ is a smooth manifold around $x^0 \in U$ of dimension $n - \affdim(\aff(\partial g(x^0)))$. Furthermore, it shows that for the representation of $\Omega^{I^e(x^0)}$ as a level set, it is sufficient to only consider a subset of the set of selection functions whose gradients form an affine basis of $\partial g(x^0)$.
    
    \subsection{The structure of \texorpdfstring{$P_c^I \cap \Omega^I$}{PcIcapOmegaI}} \label{sec:structure_of_PcI_OmegaI}
    
        After analyzing the structure of $\Omega^I$, we will now turn towards the structure of the intersection $P_c^I \cap \Omega^I$ in \eqref{eq:Pc_for_fixed_I}.
        First of all, as for $\Omega^I$, we will show that not all selection functions of $g$ are required for the representation of $P_c^I \cap \Omega^I$. More precisely, a simple application of Carathéodory's theorem (Theorem \ref{thm:caratheodory}) to the definition of $P_c^I$ yields the following result.
        
        \begin{lemma} \label{lem:Pc_caratheodory}
            Let $x^0 \in P_c$ and let $\{g_1,\dots,g_k\}$ be a set of selection functions of $g$. If $x^0$ is not a critical point of $g$, then there is an index set $\{ i_1, \dots, i_r \} \subseteq \{ 1, \dots, k \}$ with $r = \affdim(\aff(\{ \nabla f(x^0) \} \cup \partial g(x^0)))$ such that
            \begin{enumerate}
                \item[a)] $0 \in \conv(\{ \nabla f(x^0) \} \cup \{ \nabla g_i(x^0) : i \in \{i_1,\dots,i_r \}\})$,
                \item[b)] $\{ \nabla f(x^0) \} \cup \{ \nabla g_i(x^0) : i \in \{i_1,\dots,i_r \}\}$ is affinely independent.
            \end{enumerate}
        \end{lemma}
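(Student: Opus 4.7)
The plan is to apply Theorem~\ref{thm:caratheodory} (Carath\'eodory) to the finite set $A := \{\nabla f(x^0)\} \cup \{\nabla g_i(x^0) : i \in I^e(x^0)\} \subseteq \R^n$. Since $x^0 \in P_c$ and $\partial g(x^0) = \conv(\{\nabla g_i(x^0) : i \in I^e(x^0)\})$ by Lemma~\ref{lem:PC_subdiff}, we have $0 \in \conv(A)$; moreover, because taking the convex hull does not alter the affine hull, $r = \affdim(\aff(\{\nabla f(x^0)\} \cup \partial g(x^0))) = \affdim(\aff(A))$.

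The first step is to represent $0$ as a convex combination of elements of $A$ using as few summands as possible, all with strictly positive coefficients. Such a minimal representation automatically uses an affinely independent subset of $A$: a nontrivial affine dependence among the chosen points could otherwise be added, with a suitable scaling, to the convex combination to drive one coefficient to zero while preserving the identity, contradicting minimality. By Theorem~\ref{thm:caratheodory}, the number of summands in any such representation is at most $r+1$.

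Next, I would invoke the hypothesis that $x^0$ is not a critical point of $g$: if $\nabla f(x^0)$ were absent from the minimal representation, then $0$ would be a convex combination of $\{\nabla g_i(x^0) : i \in I^e(x^0)\}$, giving $0 \in \partial g(x^0)$ in contradiction to the assumption. Hence the minimal representation has the form $0 = \alpha_0 \nabla f(x^0) + \sum_{j=1}^{s} \alpha_j \nabla g_{i_j}(x^0)$ with $\alpha_0,\alpha_1,\ldots,\alpha_s > 0$, $\sum_{j=0}^{s} \alpha_j = 1$, distinct $i_1, \ldots, i_s \in I^e(x^0) \subseteq \{1,\dots,k\}$, and $\{\nabla f(x^0), \nabla g_{i_1}(x^0), \ldots, \nabla g_{i_s}(x^0)\}$ affinely independent of size $s+1 \le r+1$.

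To conclude, I would extend this affinely independent set to an affine basis of $\aff(A)$, which by definition has exactly $r+1$ elements. Since $\nabla f(x^0)$ is already contained in the set, the additional elements can be drawn from $\{\nabla g_i(x^0) : i \in I^e(x^0)\}$, producing further indices $i_{s+1},\ldots,i_r \in I^e(x^0) \subseteq \{1,\dots,k\}$. Appending these to the convex combination with coefficient $0$ yields assertion~(a), while the affine independence of the extended basis is exactly assertion~(b). The only delicate point is justifying that the extension avoids re-using $\nabla f(x^0)$; this holds because $A$ generates $\aff(A)$, so any affinely independent subset of $A$ already containing $\nabla f(x^0)$ can be completed to a basis by remaining elements of $A \setminus \{\nabla f(x^0)\}$, regardless of whether $\nabla f(x^0) \in \aff(\{\nabla g_i(x^0) : i \in I^e(x^0)\})$ or not.
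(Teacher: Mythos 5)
Your proposal is correct and follows essentially the same route as the paper: apply Carath\'eodory's theorem to $\{\nabla f(x^0)\}\cup\{\nabla g_i(x^0): i\in I^e(x^0)\}$ and use the non-criticality of $g$ to force $\nabla f(x^0)$ into the affinely independent subset. The paper states this in two sentences, whereas you additionally spell out the details it leaves implicit (minimality of the representation implying affine independence, and the extension to an affine basis of size $r+1$ by elements other than $\nabla f(x^0)$), all of which are handled correctly.
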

        \begin{proof}
            By Theorem \ref{thm:caratheodory}, there is an affinely independent subset of
            \begin{align*}
                \{ \nabla f(x^0) \} \cup \{ \nabla g_i(x^0) : i \in \{1,\dots,k\} \}
            \end{align*}
            of size $r+1$ with zero in its convex hull. Since $x^0$ is not a critical point of $g$, $\nabla f(x^0)$ must be contained in that subset.
        \end{proof}
        
        With Lemma \ref{lem:structure_OmegaI} and Lemma \ref{lem:Pc_caratheodory}, we have ways to simplify $\Omega^I$ and $P_c^I$, respectively, by only considering certain selection functions of $g$. But note that we can not necessarily choose the same selection functions for both results: Although the set $\{ \nabla g_i(x^0) : i \in \{i_1,\dots,i_r\}\}$ in Lemma \ref{lem:Pc_caratheodory} is affinely independent, the index set $\{i_1,\dots,i_r\}$ can not necessarily be used in Lemma \ref{lem:structure_OmegaI} since we might have $r < d+1$, i.e.,
        \begin{equation} \label{eq:derivation_A2}
            \begin{aligned}
                &\affdim(\aff(\{ \nabla f(x^0) \} \cup \partial g(x^0))) < \affdim(\aff(\partial g(x^0))) + 1 \\
                \Leftrightarrow \ &\aff(\{ \nabla f(x^0) \} \cup \partial g(x^0)) = \aff(\partial g(x^0)) \\
                \Leftrightarrow \ &\nabla f(x^0) \in \aff(\partial g(x^0)).
            \end{aligned}      
        \end{equation}
        In particular, since $x^0$ is Pareto critical, this would imply that $0 \in \aff(\partial g(x^0))$ (even though $x^0$ is not critical for $g$, i.e., $0 \notin \conv(\partial g(x^0))$). The following lemma shows that this scenario is related to the uniqueness of the KKT multiplier corresponding to $f$ in $x^0$.
        
        \begin{lemma} \label{lem:alpha1_unique}
            Let $x^0 \in P_c$ such that $x^0$ is not a critical point of $g$.
            \begin{itemize}
                \item[a)] If the KKT multiplier $\alpha_1$ of $f$ in $x^0$ (cf.\ \eqref{eq:KKT_convex_comb}) is not unique, then $\nabla f(x^0) \in \aff(\partial g(x^0))$.
                \item[b)] If $\nabla f(x^0) \in \aff(\partial g(x^0))$ and $0$ is contained in the relative interior (cf.\ Definition \ref{def:relative_interior}) of $\conv(\{ \nabla f(x^0) \} \cup \partial g(x^0))$, then the KKT multiplier $\alpha_1$ of $f$ in $x^0$ is not unique.
            \end{itemize}
        \end{lemma}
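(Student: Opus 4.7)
The plan is to treat the two implications separately, using the characterization of Pareto criticality from \eqref{eq:KKT_convex_comb} together with the polytope structure of $\partial g(x^0)$ furnished by Lemma \ref{lem:PC_subdiff}. A preliminary observation that I would record first and use throughout: since $x^0$ is not critical for $g$, the relation $\alpha_1 \nabla f(x^0) + \alpha_2 \xi = 0$ with $\xi \in \partial g(x^0)$, $\alpha_1, \alpha_2 \geq 0$, $\alpha_1 + \alpha_2 = 1$ forces $\alpha_1 > 0$; otherwise $\alpha_2 = 1$ and $\xi = 0 \in \partial g(x^0)$, contradicting noncriticality of $g$. Moreover, if $\alpha_1$ is non-unique then $\nabla f(x^0) \neq 0$, since $\nabla f(x^0) = 0$ would force $\alpha_2 \xi = 0$ and hence (using $\xi \neq 0$) $\alpha_2 = 0$, $\alpha_1 = 1$ in every representation. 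Consequently, in the setting of part~(a) we may assume $\alpha_1, \alpha_2 > 0$ as well as $\alpha_1', \alpha_2' > 0$.

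For part~(a), take two valid tuples $(\alpha_1, \alpha_2, \xi)$ and $(\alpha_1', \alpha_2', \xi')$ with $\alpha_1 \neq \alpha_1'$. Solving the criticality equation yields $\xi = -(\alpha_1/\alpha_2)\nabla f(x^0)$ and $\xi' = -(\alpha_1'/\alpha_2')\nabla f(x^0)$. A one-line calculation using $\alpha_2 = 1 - \alpha_1$, $\alpha_2' = 1 - \alpha_1'$ shows that $\alpha_1/\alpha_2 = \alpha_1'/\alpha_2'$ would imply $\alpha_1 = \alpha_1'$; hence the two scalars differ and, because $\nabla f(x^0) \neq 0$, $\xi \neq \xi'$. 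Both $\xi$ and $\xi'$ are nonzero scalar multiples of $\nabla f(x^0)$, so they lie on the line $L$ through the origin and $\nabla f(x^0)$; being distinct, they determine this line, so $\aff(\{\xi, \xi'\}) = L$. This gives $\nabla f(x^0) \in L \subseteq \aff(\partial g(x^0))$.

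For part~(b), I would use Lemma \ref{lem:PC_subdiff} to write $\partial g(x^0) = \conv\{v_1, \dots, v_m\}$ with $v_i := \nabla g_i(x^0)$, so that $\conv(\{\nabla f(x^0)\} \cup \partial g(x^0))$ is the convex hull of $\{\nabla f(x^0), v_1, \dots, v_m\}$. The relative-interior hypothesis and Lemma \ref{lem:relint_polytope} then furnish strictly positive coefficients $\beta_0, \beta_1, \dots, \beta_m > 0$ with $\beta_0 + \sum_i \beta_i = 1$ and $\beta_0 \nabla f(x^0) + \sum_i \beta_i v_i = 0$; collecting the $v_i$-terms into a single $\xi \in \partial g(x^0)$ shows $\alpha_1 = \beta_0$ is a KKT multiplier. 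The affine-hull assumption yields scalars $\mu_1, \dots, \mu_m \in \R$ with $\sum_i \mu_i = 1$ and $\nabla f(x^0) = \sum_i \mu_i v_i$. The key step is a one-parameter perturbation: for $t \in \R$, set $\beta_0(t) := \beta_0 + t$ and $\beta_i(t) := \beta_i - t\mu_i$. Direct substitution gives $\beta_0(t) + \sum_i \beta_i(t) = 1$ and $\beta_0(t)\nabla f(x^0) + \sum_i \beta_i(t) v_i = 0$, while strict positivity of the original coefficients ensures $\beta_0(t), \beta_i(t) > 0$ for $|t|$ sufficiently small. This produces an interval of valid KKT multipliers $\alpha_1(t) = \beta_0 + t$, establishing non-uniqueness.

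The main obstacle is bookkeeping rather than a deep idea: in~(a), the degenerate subcases $\nabla f(x^0) = 0$ and $\xi = \xi'$ must be eliminated up front, which is exactly what the preliminary observation and the ratio calculation accomplish; in~(b), the crucial role of the relative-interior hypothesis (as opposed to mere membership in the convex hull) is that it yields a \emph{strictly} positive starting representation, giving the perturbation $\beta_j(t)$ enough slack to remain nonnegative for $t$ in an open interval.
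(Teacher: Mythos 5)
Your proof is correct and follows essentially the same route as the paper's: part~(b) is the same perturbation argument (a strictly positive representation from the relative-interior hypothesis, shifted along the affine relation $\nabla f(x^0) = \sum_i \mu_i v_i$), and part~(a) differs only cosmetically, in that you solve for the aggregated subgradients $\xi, \xi'$ as scalar multiples of $\nabla f(x^0)$ and pass through the line they span, whereas the paper divides the difference of the two vanishing combinations by $\alpha_1^1 - \alpha_1^2$ directly (which also avoids the separate treatment of $\nabla f(x^0)=0$). Your preliminary observations correctly close the degenerate subcases, so no gap remains.
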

        \begin{proof}
            See \ref{SM:proof_lem_affine_coeff_unique} in the supplementary material.
        \end{proof}
        
        \begin{remark} \label{rem:kink_front}
            In \cite{H2001}, Section 4.3, it was shown that in the smooth case and under certain regularity assumptions on $f$ and $g$, the coefficient vector of the vanishing convex combination in the KKT condition in a point $x \in P_c$, i.e., the vector $(\alpha_1, \alpha_2)^\top$ in \eqref{eq:KKT_convex_comb}, is orthogonal to the tangent space of the image of the Pareto critical set at $(f(x),g(x))^\top$. Thus, roughly speaking, non-uniqueness of $(\alpha_1, \alpha_2)^\top$ suggests that this tangent space is ``degenarate'', i.e., that the Pareto front possesses a kink at $(f(x),g(x))^\top$.
        \end{remark}
        
        The following example shows what behavior may occur if the KKT multiplier of $f$ is not unique.
        \begin{example} \label{example:zero_in_aff_g}
            Consider problem \eqref{eq:MOP} for $f : \R^2 \rightarrow \R$, $x \mapsto x_1^2 + x_2^2$, and
            \begin{align*}
                &g_1 : \R^2 \rightarrow \R, \quad x \mapsto x_1^2 + (x_2 - 1)^2, \\
                &g_2 : \R^2 \rightarrow \R, \quad x \mapsto x_1^2 + (x_2 - 1)^2 - \left( x_2 - \frac{1}{2} \right), \\
                &g : \R^2 \rightarrow \R, \quad x \mapsto \max(\{g_1(x),g_2(x)\}).
            \end{align*}
            Then $g$ is $PC^1$ with selection functions $g_1$ and $g_2$. It is easy to see that 
            %$\Omega^{\{1,2\}} = \R \times \{ \frac{1}{2} \}$
            \begin{align*}
                \Omega^{\{1,2\}} = \{ x \in \R^n : I^e(x) = \{1,2\} \} = \R \times \left\{ \frac{1}{2} \right\},
            \end{align*}
            as depicted in Figure \ref{fig:zero_in_aff_g}(a).
    		\begin{figure}[ht] 
    			\centering 
    			\subfloat[]{\includegraphics[width=0.45\textwidth]{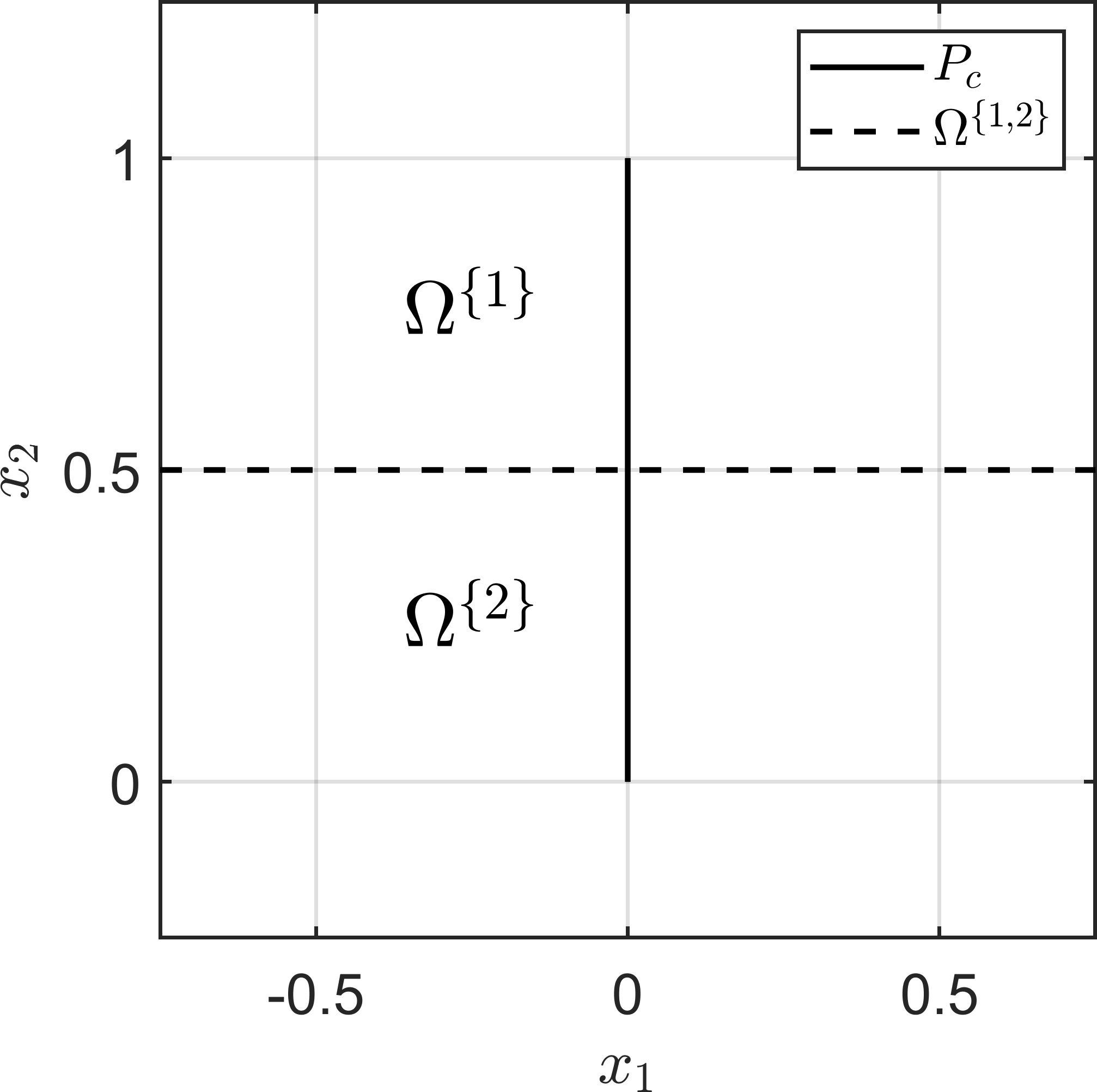}}
    			\hfill
    	        \subfloat[]{\includegraphics[width=0.45\textwidth]{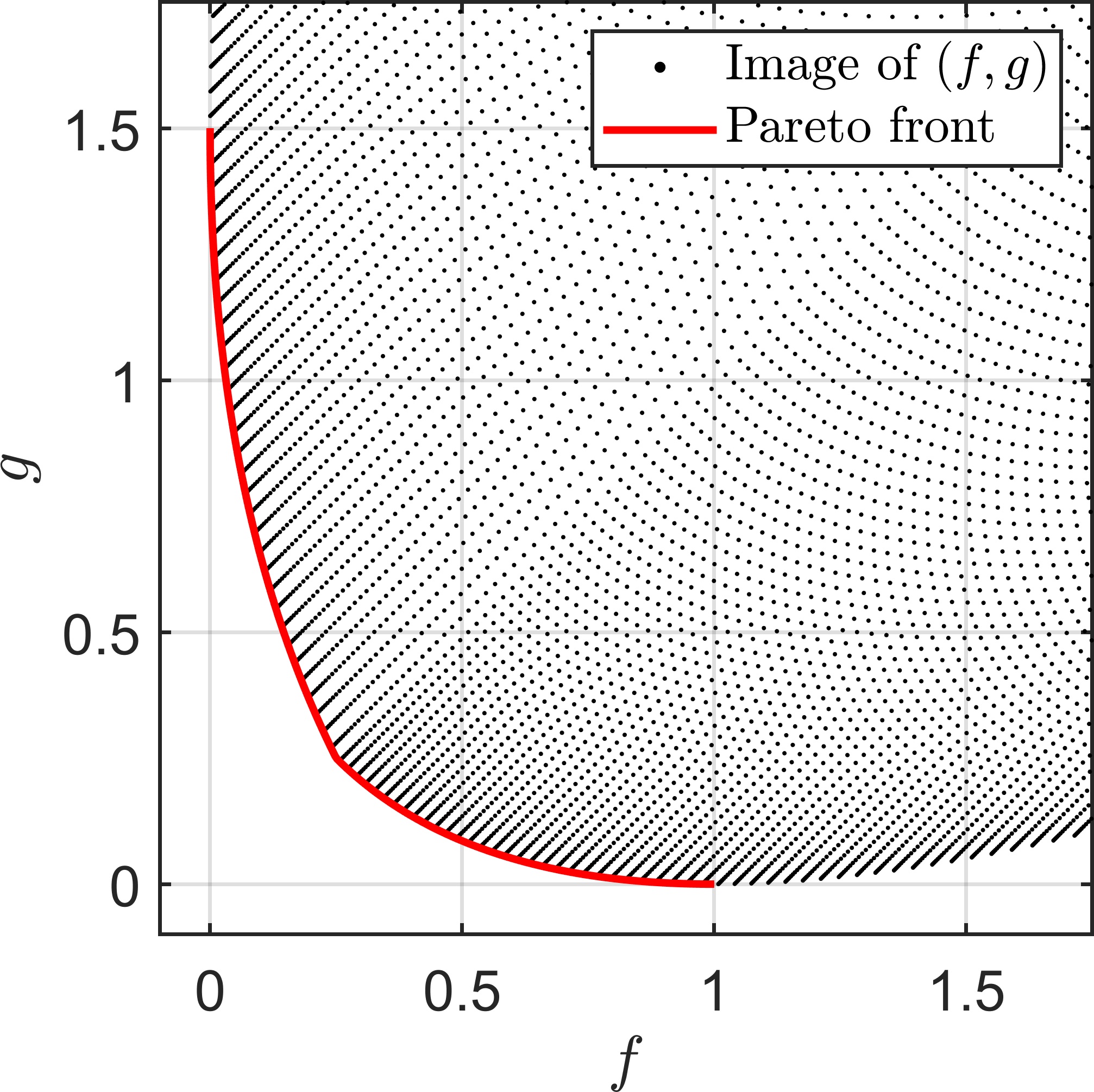}}
    			\caption{(a) Pareto critical set $P_c$ and $\Omega^I$, $I \subseteq \{1,2\}$, in Example \ref{example:zero_in_aff_g}. (b) Pointwise discretization of the image $\{ (f(x),g(x))^\top : x \in \R^2 \}$ of the objective vector $(f,g)$ and the image of the Pareto critical set under $(f,g)$.}
    			\label{fig:zero_in_aff_g}
    		\end{figure}
    		The Pareto critical (and in this case Pareto optimal) set is given by $P_c = \{ 0 \} \times [0,1]$. In particular, $x^0 = (0,\frac{1}{2})^\top$ is the only Pareto critical point where more than one selection function is active, i.e., $P_c^{\{1,2\}} \cap \Omega^{{\{1,2\}}} = \{ x^0 \}$. By computing the gradients in $x^0$, we obtain
    		\begin{align*}
    		    \nabla f(x^0) = (0,1)^\top, \ \nabla g_1(x^0) = (0,-1)^\top, \ \nabla g_2(x^0) = (0,-2)^\top.
    		\end{align*}
        	We see that
    		\begin{align*}
    		    \frac{1}{2} \nabla f(x^0) + \frac{1}{2} \nabla g_1(x^0) = 0 \quad \text{and} \quad \frac{2}{3} \nabla f(x^0) + \frac{1}{3} \nabla g_2(x^0) = 0,
    		\end{align*}
    		so the KKT multiplier of $f$ is not unique. By Lemma \ref{lem:alpha1_unique} this implies $\nabla f(x^0) \in \aff(\{ \partial g(x^0) \})$. More explicitly, for this example, it is easy to check that
    		\begin{align*}
    		    \nabla f(x^0) = 3 \nabla g_1(x^0) - 2 \nabla g_2(x^0).
    		\end{align*}
    		Figure \ref{fig:zero_in_aff_g}(b) shows an approximation of the image of $(f,g)$ and the image of the Pareto critical set. As discussed in Remark \ref{rem:kink_front}, we see that the image of $P_c$ has a kink at $(f(x^0),g(x^0))^\top = (\frac{1}{4},\frac{1}{4})^\top$.
        \end{example}
        
        As the previous example suggests, a scenario where the KKT multiplier of $f$ is not unique may occur if the Pareto critical set goes transversally through the set of nonsmooth points instead of moving tangentially along it. In other words, it may occur if arbitrarily close to $x^0 \in P_c$, there are Pareto critical points with essentially active sets $I_1$ and $I_2$ such that $I_1 \neq I_2$ and $I_1 \neq I^e(x^0) \neq I_2$. Due to continuity of the gradients, the KKT multipliers for both sets $I_1$ and $I_2$ have accumulation points that are KKT multipliers of $x^0$. Since $I_1 \neq I_2$, these accumulation points may not coincide, such that the KKT multipliers in $x^0$ are not unique. In terms of the structure of $P_c^I \cap \Omega^I$, we see that it is a $0$-dimensional set in Example \ref{example:zero_in_aff_g} (for $I = \{1,2\}$) as it is just a single point. 
             
        Although Pareto critical points $x^0$ with $\nabla f(x^0) \in \aff(\partial g(x^0))$ may not necessarily cause nonsmoothness of $P_c$, we will still exclude them from our consideration of the local structure of $P_c$ around $x^0$ to avoid the irregularities discussed above. So formally, we introduce the following assumption:
        \begin{assumption} \label{assum:A2}
            For $x^0 \in P_c$ we have
            \begin{align*}
                \nabla f(x^0) \notin \aff(\partial g(x^0)).
            \end{align*}
        \end{assumption}
    
        Roughly speaking, since $\affdim(\aff(\partial g(x^0))) < n$ in most cases, we expect that the set of points that violate Assumption \ref{assum:A2} is small compared to $P_c$ (or even empty). By \eqref{eq:derivation_A2}, Assumption \ref{assum:A2} implies that there is an index set as in  Lemma \ref{lem:Pc_caratheodory} that satisfies the requirements of Lemma \ref{lem:structure_OmegaI}. In particular, $P_c^I \cap \Omega^I$ can then be expressed using only a subset of the selection functions of $g$.
        
        The discussion of $P_c^I \cap \Omega^I$ so far was mainly focused on the removal of redundant information in the subdifferential of $g$ to simplify our analysis.
        We will now turn towards its actual geometrical structure. To this end, we again consider Example \ref{example:simple_kink}.
        
        \begin{example} \label{example:simple_kink_2}
            Let $f$ and $g$ be as in Example \ref{example:simple_kink}. (The corresponding Pareto critical set is shown in Figure \ref{fig:simple_kink}.) Let $x^0 = (1,0)^\top$ and $U \subseteq \R^2$ be the open ball with radius one around $x^0$. Then a set of selection functions of $g|_U$ is given by $\{g_1, g_2\}$ and we have $P_c^{\{ 1,2 \}} \cap \Omega^{\{ 1,2 \}} = (0,1] \times \{ 0 \}$. In particular, $x^0$ is a boundary point of $P_c^{\{ 1,2 \}} \cap \Omega^{\{ 1,2 \}}$, such that $P_c^{\{ 1,2 \}} \cap \Omega^{\{ 1,2 \}}$ is not smooth around $x^0$ (in the sense of smooth manifolds). The gradients of $f$, $g_1$ and $g_2$ are shown in Figure \ref{fig:simple_kink_2}. 
            \begin{figure}
    		    \centering
    		    \includegraphics[width=0.45\textwidth]{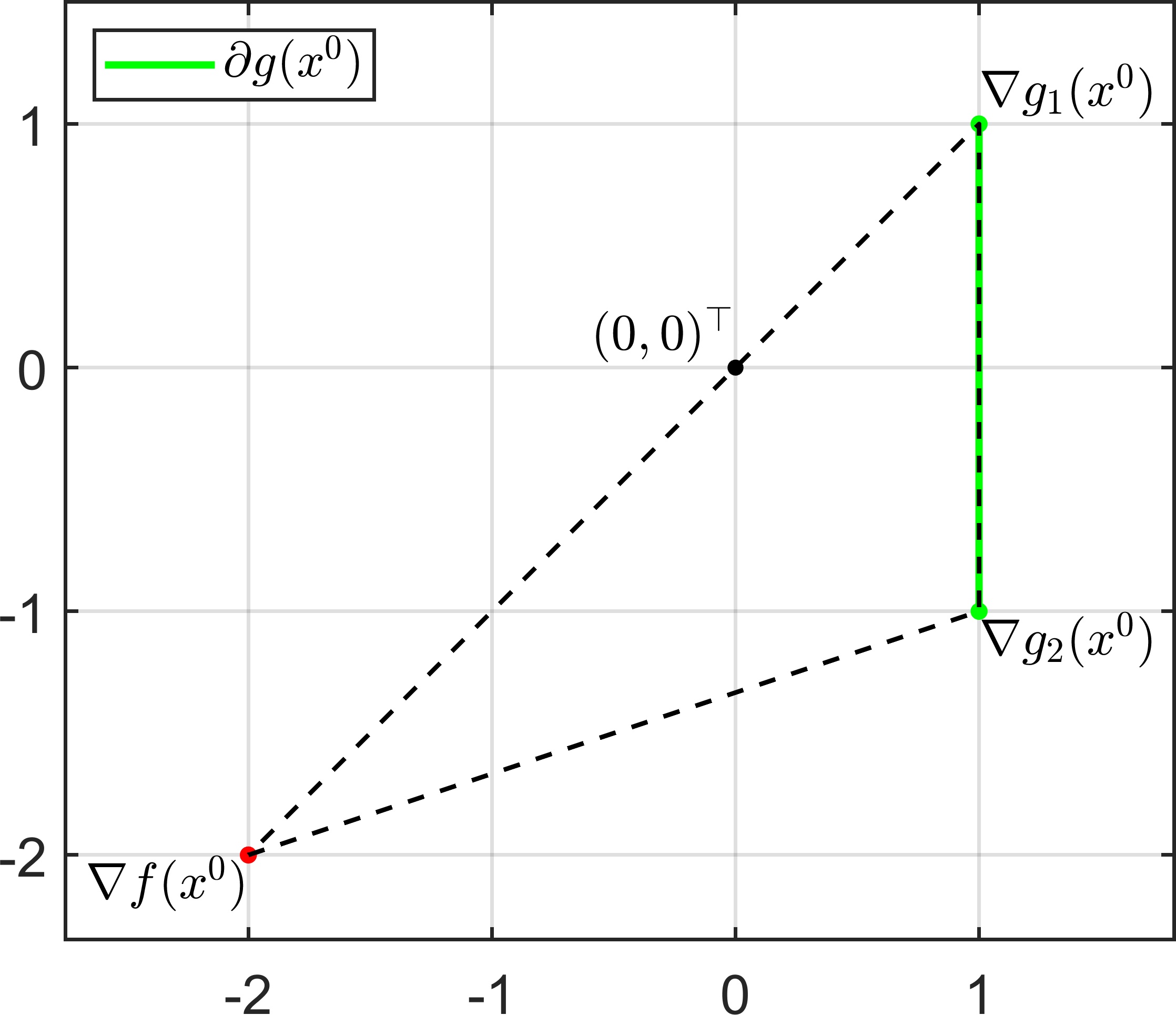}
    		    \caption{The gradients of $f$, $g_1$ and $g_2$ in $x^0 = (1,0)^\top$ in Example \ref{example:simple_kink_2}. The dashed line shows the (relative) boundary of the convex hull $\conv(\{ \nabla f(x^0) \} \cup \partial g(x^0))$.}
    		    \label{fig:simple_kink_2}
    	    \end{figure}
            We see that there is a unique convex combination
            \begin{align} \label{eq:example_simple_kink_coeffs}
                \frac{1}{3} \nabla f(x^0) + \frac{2}{3} \nabla g_1(x^0) + 0 \nabla g_2(x^0) = 0
            \end{align}
            where the coefficient of $\nabla g_2(x^0)$ is zero.
        \end{example}
    
        Note that in the previous example, there is still a vanishing affine combination of the gradients of $f$, $g_1$ and $g_2$ for $x = (x_1,0)^\top$, $x_1 > 1$. But it is not a convex combination, as the coefficient corresponding to $\nabla g_2(x)$ is negative. Due to the  continuity of the gradients, this can only happen if one of the coefficients in $x^0$ is already zero (as in \eqref{eq:example_simple_kink_coeffs}). To exclude the type of nonsmoothness caused by this, we introduce the following assumption.
        \begin{assumption} \label{assum:A3}
            For $x^0 \in P_c$ and a set of selection functions $\{ g_1, \dots, g_k \}$ of $g$, there is an index set $\{ i_1, \dots, i_r \} \subseteq \{1,\dots,k\}$ as in Lemma \ref{lem:Pc_caratheodory} and positive coefficients $\alpha^0 > 0$, $\beta^0 \in (\R^{>0})^r$ with $\alpha^0 + \sum_{j = 1}^r \beta^0_j = 1$ and $\alpha^0 \nabla f(x^0) + \sum_{j = 1}^r \beta^0_j \nabla g_{i_j}(x^0) = 0$.
        \end{assumption}
    
        The following lemma yields a necessary condition for Assumption \ref{assum:A3} to hold, which is related to the relative interior (cf.\ Definition \ref{def:relative_interior}) of $\conv(\{ \nabla f(x^0) \} \cup \partial g(x^0))$. In particular, it is independent of the choice of selection functions.
    
        \begin{lemma} \label{lem:zero_in_relative_int}
            Let $x^0 \in P_c$. If there is a set of selection functions such that Assumption \ref{assum:A3} holds, then 
            \begin{align*}
                0 \in \ri(\conv(\{ \nabla f(x^0) \} \cup \partial g(x^0))).
            \end{align*}
        \end{lemma}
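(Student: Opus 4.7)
The plan is to exploit the fact that Assumption \ref{assum:A3} together with Lemma \ref{lem:Pc_caratheodory} gives us a convex combination of $0$ whose coefficients are all strictly positive and whose supporting vectors form an affine basis of $\aff(\{\nabla f(x^0)\}\cup\partial g(x^0))$. Once this is established, Lemma \ref{lem:relint_polytope} identifies $0$ as a relative interior point of the sub-polytope spanned by that affine basis, and a short topological argument transfers this to the full convex hull.

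First, I would set up the notation. Let $\{g_1,\dots,g_k\}$ and $\{i_1,\dots,i_r\}$ be as in Assumption \ref{assum:A3}, and denote $S := \{\nabla f(x^0),\nabla g_{i_1}(x^0),\dots,\nabla g_{i_r}(x^0)\}$ together with $A := \conv(S)$ and $B := \conv(\{\nabla f(x^0)\}\cup\partial g(x^0))$. By Lemma \ref{lem:Pc_caratheodory}, the set $S$ is affinely independent, has cardinality $r+1$, and $r = \affdim(\aff(\{\nabla f(x^0)\}\cup\partial g(x^0))) = \affdim(\aff(B))$. Since $S\subseteq\{\nabla f(x^0)\}\cup\partial g(x^0)\subseteq\aff(B)$, an affinely independent subset of size $\affdim(\aff(B))+1$ must be an affine basis of $\aff(B)$, which yields $\aff(S) = \aff(B)$ and therefore $\aff(A) = \aff(B)$.

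Next, Assumption \ref{assum:A3} provides strictly positive coefficients $\alpha^0,\beta^0_1,\dots,\beta^0_r$ summing to $1$ such that
\begin{equation*}
0 = \alpha^0\,\nabla f(x^0) + \sum_{j=1}^r \beta^0_j\,\nabla g_{i_j}(x^0).
\end{equation*}
Applying Lemma \ref{lem:relint_polytope} to $A = \conv(S)$ immediately gives $0\in\ri(A)$.

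Finally, I would transfer this to $B$ using Definition \ref{def:relative_interior}. Since $0\in\ri(A)$, there is an open set $U\subseteq\R^n$ with $0\in U$ and $U\cap\aff(A)\subseteq A$. Because $A\subseteq B$ and $\aff(A)=\aff(B)$, this gives $U\cap\aff(B)\subseteq B$, so $0\in\ri(B)$, as claimed. There is no real obstacle here; the only subtlety worth stating carefully is that affine independence of $r+1$ elements in an $r$-dimensional affine hull forces them to be an affine basis, which is exactly what allows the sub-polytope $A$ to ``probe'' the relative interior of the larger polytope $B$.
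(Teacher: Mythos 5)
Your proof is correct, but it takes a genuinely different route from the paper's. The paper works entirely with coefficients over the \emph{full} vertex set $\{ \nabla f(x^0) \} \cup \{ \nabla g_1(x^0), \dots, \nabla g_k(x^0) \}$: it writes each remaining gradient $\nabla g_l(x^0)$, $l \notin \{i_1,\dots,i_r\}$, in the affine basis from Assumption \ref{assum:A3}, assembles from these a vanishing affine combination of all vertices whose coefficients sum to zero, and adds a small positive multiple of it to the strictly positive combination from \ref{assum:A3} so that \emph{every} vertex receives a strictly positive coefficient; Lemma \ref{lem:relint_polytope} is then applied directly to the full polytope. You instead apply Lemma \ref{lem:relint_polytope} only to the sub-polytope $A = \conv(S)$ spanned by the affine basis, and transfer the conclusion to $B$ via the topological fact that $\ri(A) \subseteq \ri(B)$ whenever $A \subseteq B$ and $\aff(A) = \aff(B)$ --- a step you justify correctly straight from Definition \ref{def:relative_interior}. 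Your version is shorter and avoids the coefficient bookkeeping; the paper's version is more constructive in that it exhibits explicit positive weights on every vertex of $B$, which is closer in spirit to how Assumption \ref{assum:A3} and Lemma \ref{lem:PcIOmegaI_touch} are used elsewhere. Both arguments rest on the same key premise, namely that the $r+1$ affinely independent vectors of $S$ lie in $\aff(\{ \nabla f(x^0) \} \cup \partial g(x^0))$ and hence form an affine basis of it; this holds because the Carath\'eodory selection in Lemma \ref{lem:Pc_caratheodory} draws the indices from the essentially active set, so it is worth stating that containment explicitly rather than leaving it implicit in the phrase ``$S \subseteq \{\nabla f(x^0)\} \cup \partial g(x^0)$''.
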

        \begin{proof}
            See \ref{SM:proof_lem_zero_in_relative_int} in the supplementary material.
        \end{proof}
        
        After introducing the Assumptions \ref{assum:A1}, \ref{assum:A2} and \ref{assum:A3}, we are now able to show the first structural result about $P_c^I \cap \Omega^I$. The following lemma shows that $P_c^I \cap \Omega^I$ is the projection of a level set from a higher-dimensional space onto the variable space $\R^n$.
        \begin{lemma} \label{lem:intersec_level_set}
            Let $x^0 \in P_c$. Let $U \subseteq \R^n$ be an open neighborhood of $x^0$ and let $\{ g_1, \dots, g_k \}$ be a set of selection functions of $g|_U$ satisfying Assumptions \ref{assum:A1} and \ref{assum:A3}. Assume that Assumption \ref{assum:A2} holds. Then there is an index set $\{ i_1, \dots, i_r \} \subseteq \{1,\dots,k\}$ and an open neighborhood $U' \subseteq U$ of $x^0$ such that
            \begin{align} \label{eq:PcI_OmegaI_as_projection}
                P_c^{\{1,\dots,k\}} \cap \Omega^{\{1,\dots,k\}} \cap U' = {\pr}_x(h^{-1}(0)) \cap U',
            \end{align}
            where ${\pr}_x : \R^n \times \R \times \R^r \rightarrow \R^n$ is the projection onto the first $n$ components and
            \begin{align*}
                h: \R^n \! \times \! \R^{>0} \! \times \! (\R^{>0})^r \rightarrow \R^n \! \times \! \R \! \times \! \R^{r-1},
                (x,\alpha,\beta) \mapsto 
                \begin{pmatrix}
                    \alpha \nabla f(x) + \sum_{j = 1}^r \beta_j \nabla g_{i_j}(x) \\
                    \alpha + \sum_{j = 1}^r \beta_j - 1 \\
                    (g_{i_j}(x) - g_{i_1}(x))_{j \in \{2,\dots,r\}}
                \end{pmatrix}.
            \end{align*}
        \end{lemma}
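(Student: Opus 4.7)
The plan is to pick the index set using Assumption \ref{assum:A3}, invoke Lemma \ref{lem:structure_OmegaI} for a local equational description of $\Omega^{\{1,\dots,k\}}$, and then prove the two inclusions, with the nontrivial one resting on a uniqueness and continuity argument for the coefficients of the vanishing affine combination of gradients.

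By Assumption \ref{assum:A3} there exist an index set $\{i_1,\dots,i_r\} \subseteq \{1,\dots,k\}$ as in Lemma \ref{lem:Pc_caratheodory} and strictly positive $\alpha^0, \beta^0$ realizing the vanishing convex combination at $x^0$. Assumption \ref{assum:A2} together with \eqref{eq:derivation_A2} forces $r = d+1$ with $d := \affdim(\aff(\partial g(x^0)))$, so the affinely independent set $\{\nabla g_{i_j}(x^0)\}_{j=1}^r$ (Lemma \ref{lem:Pc_caratheodory}(b)) is an affine basis of $\aff(\partial g(x^0))$. Lemma \ref{lem:structure_OmegaI} therefore applies with this index set and produces an open neighborhood $U' \subseteq U$ of $x^0$ on which $\Omega^{\{1,\dots,k\}} \cap U' = \{x \in U' : g_{i_j}(x) - g_{i_1}(x) = 0 \ \forall j \in \{2,\dots,r\}\}$.

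The inclusion ``$\supseteq$'' is direct: if $(x,\alpha,\beta) \in h^{-1}(0)$ with $x \in U'$, the first two components of $h$ give a vanishing convex combination certifying $x \in P_c^{\{1,\dots,k\}}$, and the third component together with the description of $\Omega^{\{1,\dots,k\}} \cap U'$ above yields $x \in \Omega^{\{1,\dots,k\}}$. For the converse, shrink $U'$ so that on it (i) $\{\nabla f(x)\} \cup \{\nabla g_{i_j}(x)\}_{j=1}^r$ remains affinely independent (openness of affine independence plus continuity of gradients), (ii) $\nabla f(x) \notin \aff(\{\nabla g_i(x)\}_{i=1}^k)$ (Assumption \ref{assum:A2} plus continuity), and (iii) $\{\nabla g_{i_j}(x)\}_{j=1}^r$ stays an affine basis of $\aff(\{\nabla g_i(x)\}_{i=1}^k)$ (Assumption \ref{assum:A1}(iii) plus continuity). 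Together (ii) and (iii) imply $\aff(\{\nabla f(x)\} \cup \{\nabla g_i(x)\}_{i=1}^k) = \aff(\{\nabla f(x)\} \cup \{\nabla g_{i_j}(x)\}_{j=1}^r)$ throughout $U'$.

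Now fix $x \in P_c^{\{1,\dots,k\}} \cap \Omega^{\{1,\dots,k\}} \cap U'$. Since $x \in P_c^{\{1,\dots,k\}}$, the point $0$ lies in the affine hull on the left, hence in the one on the right. By Lemma \ref{lem:affine_coeff_unique} and affine independence, the corresponding coefficients $(\alpha(x),\beta(x))$ in the affine combination $\alpha \nabla f(x) + \sum_j \beta_j \nabla g_{i_j}(x) = 0$, $\alpha + \sum_j \beta_j = 1$, are unique whenever they exist; by selecting $r+1$ rows of the coefficient matrix that are linearly independent at $x^0$ and applying Cramer's rule, $(\alpha(\cdot),\beta(\cdot))$ is continuous on a neighborhood of $x^0$. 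Since $(\alpha(x^0),\beta(x^0)) = (\alpha^0,\beta^0) > 0$, a final shrinking of $U'$ guarantees $(\alpha(x),\beta(x)) > 0$ throughout, and combined with $g_{i_j}(x) = g_{i_1}(x)$ we obtain $(x,\alpha(x),\beta(x)) \in h^{-1}(0)$, so $x \in \pr_x(h^{-1}(0))$. The main obstacle is this last continuity step: the defining linear system for $(\alpha,\beta)$ has $n+1$ equations in $r+1$ unknowns and is generically over-determined, but the rank condition at $x^0$ inherited from affine independence makes a well-defined and continuous local selection of the solution possible.
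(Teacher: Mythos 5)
Your proposal is correct and follows essentially the same route as the paper's proof: choosing the index set via Assumption \ref{assum:A3}, using Assumption \ref{assum:A2} to make it compatible with Lemma \ref{lem:structure_OmegaI}, and shrinking $U'$ so that affine independence, the affine-basis property, and positivity of the (unique, continuously varying) coefficients persist. Your explicit Cramer's-rule justification of the continuity of $(\alpha(\cdot),\beta(\cdot))$ merely spells out what the paper compresses into ``we can assume w.l.o.g.\ that $U'$ is small enough.''
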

        \begin{proof}
            Let $\{ i_1, \dots, i_r \} \subseteq \{1,\dots,k\}$ be an index set as in \ref{assum:A3}. Since the gradients $\nabla f$ and $\nabla g_{i_j}$, $j \in \{1,\dots,r\}$, are continuous and $\{ \nabla f(x^0) \} \cup \{ \nabla g_{i_j}(x^0) : j \in \{1,\dots,r\} \}$ is affinely independent, there is an open neighborhood $U' \subseteq U$ of $x^0$ such that $\{ \nabla f(x) \} \cup \{ \nabla g_{i_j}(x) : j \in \{1,\dots,r\} \}$ is affinely independent for all $x \in U'$. In particular,
            \begin{equation} \label{eq:lem_intersec_level_set_1}
                \begin{aligned}
                    r &\leq \affdim(\aff(\{ \nabla f(x) \} \cup \{ \nabla g_i(x) : i \in \{1,\dots,k\})) \\
                    &\leq \affdim(\aff(\{ \nabla g_i(x) : i \in \{1,\dots,k\})) + 1 \quad \forall x \in U'.
                \end{aligned}
            \end{equation}
            By \ref{assum:A1}, \ref{assum:A2} and \ref{assum:A3}, we have 
            \begin{equation} \label{eq:lem_intersec_level_set_2}
                \begin{aligned}
                    r &\stackrel{\ref{assum:A3}}{=} \affdim(\aff(\{ \nabla f(x^0) \} \cup \partial g(x^0)))
                    \stackrel{\ref{assum:A2}}{=} \affdim(\aff(\partial g(x^0))) + 1 \\
                    &\stackrel{\ref{assum:A1}(i),(ii)}{=} \affdim(\aff(\{ \nabla g_i(x^0) : i \in \{1,\dots,k\})) + 1 \\
                    &\stackrel{\ref{assum:A1}(iii)}{=} \affdim(\aff(\{ \nabla g_i(x) : i \in \{1,\dots,k\})) + 1 \quad \forall x \in U'.
                \end{aligned}
            \end{equation}
            Combining \eqref{eq:lem_intersec_level_set_1} and \eqref{eq:lem_intersec_level_set_2}, we obtain
            \begin{align*}
                \affdim(\aff(\{ \nabla f(x) \} \cup \{ \nabla g_i(x) : i \in \{1,\dots,k\})) = r \quad \forall x \in U',
            \end{align*}
            so $\{ \nabla f(x) \} \cup \{ \nabla g_{i_j}(x) : j \in \{1,\dots,r\} \}$ is an affine basis of $\{ \nabla f(x) \} \cup \{ \nabla g_i(x) : i \in \{1,\dots,k\} \}$ for all $x \in U'$. \\
            Let $x \in P_c^{\{1,\dots,k\}} \cap \Omega^{\{1,\dots,k\}} \cap U'$. By Lemma \ref{lem:affine_coeff_unique}, every element of $\aff(\{ \nabla f(x) \} \cup \{ \nabla g_i(x) : i \in \{1,\dots,k\} \})$ can be uniquely written as an affine combination of elements of $\{ \nabla f(x) \} \cup \{ \nabla g_{i_j}(x) : j \in \{1,\dots,r\} \}$. Let $\alpha^0$ and $\beta^0$ as in \ref{assum:A3}. Since $\alpha^0 > 0$, $\beta^0 \in (\R^{>0})^r$ and the gradients $\nabla f$, $\nabla g_{i_j}$, $j \in \{1,\dots,r\}$, are continuous, we can assume w.l.o.g.\ that $U'$ is small enough such that there are $\alpha > 0$, $\beta \in (\R^{>0})^r$ with $\alpha + \sum_{j = 1}^r \beta_j = 1$ and
            \begin{align*}
                \alpha \nabla f(x) + \sum_{j = 1}^r \beta_j \nabla g_{i_j}(x) = 0.
            \end{align*}
            Furthermore, $g_{i_j}(x) - g_{i_1}(x) = 0$ holds for all $j \in \{2,\dots,r\}$ since $x \in \Omega^{\{1,\dots,k\}}$. Thus, $h(x,\alpha,\beta) = 0$, i.e., $x \in {\pr}_x(h^{-1}(0)) \cap U'$. \\
            Now let $x \in {\pr}_x(h^{-1}(0)) \cap U'$. Then $x \in P_c^{\{1,\dots,k\}}$ trivially holds since $\{ i_1, \dots, i_r \} \subseteq \{1,\dots,k\}$. By \ref{assum:A1} and Lemma \ref{lem:structure_OmegaI}, we can assume w.l.o.g.\ that $U'$ is small enough such that $g_{i_j}(x) - g_{i_1}(x) = 0$ for all $j \in \{2,\dots,r\}$ implies $x \in \Omega^{\{1,\dots,k\}}$, completing the proof.
        \end{proof}
        
        Up to this point, we assumed $f$ to be continuously differentiable and $g$ to be $PC^1$. This means that the map $h$ in the previous lemma is at least continuous. If $h$ is actually continuously differentiable, then standard results from differential geometry can be used to analyze the structure of its level sets on the right-hand side of \eqref{eq:PcI_OmegaI_as_projection}. To this end, we will assume for the remainder of this section that $f$ is twice continuously differentiable and $g$ is $PC^2$. 
        
        \begin{theorem} \label{thm:h_level_set_manifold}
            In the setting of Lemma \ref{lem:intersec_level_set} it holds:
            \begin{itemize}
                \item[a)] If $Dh(x,\alpha,\beta)$ has full rank for all $(x,\alpha,\beta) \in h^{-1}(0)$, then $h^{-1}(0)$ is a $1$-dimensional submanifold of $\R^n \times \R^{>0} \times (\R^{>0})^r$.
                \item[b)] If $Dh(x,\alpha,\beta)$ has constant rank $m \in \N$ for all $(x,\alpha,\beta) \in \R^n \times \R^{>0} \times (\R^{>0})^r$, then $h^{-1}(0)$ is an $(n+r+1-m)$-dimensional submanifold of $\R^n \times \R^{>0} \times (\R^{>0})^r$. 
            \end{itemize}
            In both cases, the tangent space of $h^{-1}(0)$ is given by
            \begin{align} \label{eq:tangent_space}
                T_{(x,\alpha,\beta)} (h^{-1}(0)) = \ker(Dh(x,\alpha,\beta)).
            \end{align}
        \end{theorem}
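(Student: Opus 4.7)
The plan is to reduce both statements to standard submersion-type theorems from differential geometry applied to the map $h$. The first step is to check that $h$ is of class $C^1$ on its domain. Since $f$ is $C^2$, the map $x \mapsto \nabla f(x)$ is $C^1$, and since $g$ is $PC^2$, each selection function $g_{i_j}$ is $C^2$, so $x \mapsto \nabla g_{i_j}(x)$ is $C^1$ and $x \mapsto g_{i_j}(x) - g_{i_1}(x)$ is even $C^2$. The dependence on $(\alpha,\beta)$ is affine. Hence $h$ is a $C^1$ map from the open subset $\R^n \times \R^{>0} \times (\R^{>0})^r$ of $\R^{n+r+1}$ into $\R^n \times \R \times \R^{r-1} \cong \R^{n+r}$.

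For part (a), I would observe that the codomain has dimension $n+r$, so ``full rank'' of $Dh(x,\alpha,\beta)$ on $h^{-1}(0)$ means rank $n+r$ at every such point, i.e., $0$ is a regular value of $h$. The submersion level set theorem (Corollary 5.14 in \cite{L2012}) then gives that $h^{-1}(0)$ is a $C^1$-embedded submanifold of the domain of codimension $n+r$, hence of dimension $(n+r+1)-(n+r)=1$.

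For part (b), the hypothesis is exactly the hypothesis of the Constant Rank Theorem (Theorem 5.12 in \cite{L2012}): $h$ has constant rank $m$ on an open subset of Euclidean space containing $h^{-1}(0)$. That theorem yields that $h^{-1}(0)$ is a $C^1$-embedded submanifold of codimension $m$, and therefore of dimension $(n+r+1)-m$, as claimed.

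In both cases the tangent space description \eqref{eq:tangent_space} is the standard one: whenever a level set $h^{-1}(0)$ is cut out smoothly by a map of constant rank (in particular under the hypotheses of either (a) or (b)), its tangent space at any point coincides with the kernel of the differential of $h$ at that point, which follows by differentiating any smooth curve lying in $h^{-1}(0)$ through the point and comparing dimensions (see, e.g., Proposition 5.38 in \cite{L2012}). There is no genuine obstacle to be overcome here; the substance of the theorem lies in the work already done to arrive at the representation of $P_c^I \cap \Omega^I$ via $h$ in Lemma \ref{lem:intersec_level_set}, and the present statement is essentially a careful dimension count together with a direct invocation of the two standard theorems.
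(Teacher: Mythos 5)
Your proposal is correct and follows exactly the same route as the paper: part a) via the regular level set theorem (Corollary 5.14 in \cite{L2012}), part b) via the constant rank level set theorem (Theorem 5.12), and the tangent space formula via Proposition 5.38, with the $C^1$-regularity of $h$ supplied by the standing assumptions that $f$ is twice continuously differentiable and $g$ is $PC^2$. The additional dimension counts you spell out are exactly what the paper leaves implicit.
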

        \begin{proof}
            Part a) follows from Corollary 5.14 and part b) follows from Theorem 5.12 in \cite{L2012}. The formula for the tangent space follows from Proposition 5.38 in \cite{L2012}.
        \end{proof}
        
        \begin{remark}
            Equation \eqref{eq:tangent_space} in the previous theorem can be used to compute tangent vectors of the regularization path in practice by computing elements of  ${\pr}_x(\ker(Dh(x,\alpha,\beta)))$. Thus, it is an essential result for the construction of path-following methods.
        \end{remark}
        
        The previous theorem is the main result in this section. It shows that the structure of $h^{-1}(0)$ (and thus the structure of $P_c^I \cap \Omega^I$ due to \eqref{eq:PcI_OmegaI_as_projection}) is related to the rank of the Jacobian $Dh$, given by
        \begin{align*}
            \begin{pmatrix}
                \alpha \nabla^2 f(x) + \sum_{j = 1}^r \beta_j \nabla^2 g_{i_j}(x) & \nabla f(x) & \nabla g_{i_1}(x) & \hdots & \nabla g_{i_r}(x) \\
                0 & 1 & 1 & \hdots & 1 \\
                (\nabla g_{i_2}(x) - \nabla g_{i_1}(x))^\top & 0 & 0 & \hdots & 0 \\
                \vdots & \vdots & \vdots &  & \vdots \\
                (\nabla g_{i_r}(x) - \nabla g_{i_1}(x))^\top & 0 & 0 & \hdots & 0
            \end{pmatrix}
            \in \R^{(n+r) \times (n+r+1)}
        \end{align*}
        for $(x,\alpha,\beta) \in \R^n \times \R^{>0} \times (\R^{>0})^r$. Note that in Theorem \ref{thm:h_level_set_manifold} b), the assumption on the rank has to hold for all $(x,\alpha,\beta) \in \R^n \times \R^{>0}\times (\R^{>0})^{r}$ whereas in a), it only has to hold for all $(x,\alpha,\beta) \in h^{-1}(0)$. The following remark shows how the structure of $Dh$ can be used to analyze its rank.
    
        \begin{remark} \label{rem:rankDh}
            In the setting of Lemma \ref{lem:intersec_level_set}, let $(v^x,v^\alpha,v^\beta) \in \ker(Dh(x,\alpha,\beta)) \subseteq \R^n \times \R^{>0} \times (\R^{>0})^r$, i.e.,
            \begin{equation} \label{eq:kerDh_explicit}
                \begin{aligned}
                    &\left( \alpha \nabla^2 f(x) + \sum_{j = 1}^r \beta_j \nabla^2 g_{i_j}(x) \right) v^x + v^\alpha \nabla f(x) + \sum_{j = 1}^r v^\beta_j \nabla g_{i_j}(x) = 0, \\
                    &v^\alpha + \sum_{j = 1}^r v^\beta_j = 0, \\
                    &(\nabla g_{i_j}(x) - \nabla g_{i_1}(x))^\top v^x = 0 \quad \forall j \in \{2,\dots,r\}.
                \end{aligned}
            \end{equation}
            Since $\{ \nabla f(x), \nabla g_{i_1}(x), \dots, \nabla g_{i_r}(x) \}$ is affinely independent by construction (cf.\ proof of Lemma~\ref{lem:intersec_level_set}), the set
            \begin{align*}
                W := \left\{ v^\alpha \nabla f(x) + \sum_{j = 1}^r v^\beta_j \nabla g_{i_j}(x) : v^\alpha \in \R, v^\beta \in \R^r, v^\alpha + \sum_{j = 1}^r v^\beta_j = 0 \right\}
            \end{align*}
            is an $r$-dimensional linear subspace of $\R^n$. Similar to Lemma \ref{lem:affine_coeff_unique}, it is possible to show that for each element of $W$, the corresponding coefficients $v^\alpha$ and $v^\beta$ are unique. If $\alpha \nabla^2 f(x) + \sum_{j = 1}^r \beta_j \nabla^2 g_{i_j}(x)$ is regular, then the first two lines of \eqref{eq:kerDh_explicit} are equivalent to
            \begin{align*}
                v^x \in -\left(\alpha \nabla^2 f(x) + \sum_{j = 1}^r \beta_j \nabla^2 g_{i_j}(x)\right)^{-1} W =: V_1,
            \end{align*}
            where $V_1$ is an $r$-dimensional linear subspace of $\R^n$. In particular, $v^\alpha$ and $v^\beta$ are uniquely determined by $v^x$. Furthermore, if we denote by $V^\bot$ the orthogonal complement of a subspace $V$, then the last line of \eqref{eq:kerDh_explicit} is equivalent to 
            \begin{align*}
                v^x \in \spn(\{ \nabla g_{i_j}(x) - \nabla g_{i_1}(x) : j \in \{2,\dots,r\} \})^\bot =: V_2,
            \end{align*}
            where $V_2$ is an $(n - (r - 1))$-dimensional subspace of $\R^n$ since $\{ \nabla g_{i_1}(x), \dots, \nabla g_{i_r}(x) \}$ is affinely independent. Thus, the dimension of $\ker(Dh(x,\alpha,\beta))$ is given by the dimension of the intersection $V_1 \cap V_2$. 
            If we assume that $V_1$ and $V_2$ are generic subspaces, then we can apply a basic result from linear algebra to see that
            \begin{align*}
                \dim(\ker(Dh(x,\alpha,\beta))) &= \dim(V_1 \cap V_2) = \dim(V_1) + \dim(V_2) - \dim(V_1 + V_2) \\
                &= r + (n - (r - 1)) - n = 1,
            \end{align*}
            i.e., the rank of $Dh(x,\alpha,\beta)$ is full and Theorem \ref{thm:h_level_set_manifold} a) can be applied.
        \end{remark}
        
        The previous remark suggests that $h^{-1}(0)$ is typically a $1$-dimensional manifold such that we expect $P_c^I \cap \Omega^I$ to be ``$1$-dimensional'' as well by \eqref{eq:PcI_OmegaI_as_projection}. Nonetheless, we will see later that there are applications where $h^{-1}(0)$ is a higher-dimensional manifold. Furthermore, there are cases where $h^{-1}(0)$ is not a manifold at all. (Note that this is not necessarily caused by the nonsmoothness of $g$ and can also happen for smooth objective functions (cf.\ Example 1 in \cite{GPD2019}).) Thus, for $P_c^I \cap \Omega^I$ to have a smooth structure around a (corresponding) $x^0 \in P_c$, we have to make the following assumption:
        
        \begin{assumption} \label{assum:A4}
            In the setting of Lemma \ref{lem:intersec_level_set}, Theorem \ref{thm:h_level_set_manifold} can be applied, i.e.,
            \begin{enumerate}
                \item[(a)] $\rk(Dh(x,\alpha,\beta)) = n + r \quad \forall (x,\alpha,\beta) \in h^{-1}(0)$ or
                \item[(b)] $\rk(Dh(x,\alpha,\beta)) \text{ is constant } \quad \forall (x,\alpha,\beta) \in \R^n \times \R^{>0} \times (\R^{>0})^r.$
            \end{enumerate}
        \end{assumption}
        
        We conclude the discussion of the structure of $P_c^I \cap \Omega^I$ by considering the special case where $f$ is quadratic and $g$ is piecewise (affinely) linear. Remark \ref{SM:rem_quadratic_piecewise_linear} in the supplementary material shows that in this case, $P_c^I \cap \Omega^I$ is (locally) an affinely linear set around points that satisfy the assumptions of Lemma \ref{lem:intersec_level_set}. This coincides with the results in \cite{RZ2007}.

    \subsection{The structure of \texorpdfstring{$P_c$}{Pc}}
    
        After analyzing the structure of $P_c^I \cap \Omega^I$, we are now in the position to analyze the structure of the Pareto critical set $P_c$ of \eqref{eq:MOP}. By \eqref{eq:Pc_for_fixed_I}, $P_c$ can be written as the union of $P_c^I \cap \Omega^I$ for all possible combinations $I$ of selection functions. Since we already discussed the structure of the individual $P_c^I \cap \Omega^I$, the only additional nonsmooth points in $P_c$ may arise by taking their union. More precisely, nonsmooth points may arise where the different $P_c^I \cap \Omega^I$ touch, i.e., where the set of (essentially) active selection functions changes. The following lemma yields a necessary condition for identifying such points.
        
        \begin{lemma} \label{lem:PcIOmegaI_touch}
            Let $x^0 \in P_c$ and let $\{g_1, \dots, g_k\}$ be a set of selection functions of $g$ with $I^e(x^0) = \{ i_1, \dots, i_l \}$, $l \in \N$. If for all open neighborhoods $U \subseteq \R^n$ of $x^0$, there is some $x \in P_c \cap U$ with $I^e(x) \neq I^e(x^0)$, then there are $\alpha \geq 0$ and $\beta \in (\R^{\geq 0})^{l}$ such that $\alpha + \sum_{j = 1}^{l} \beta_j = 1$, 
            \begin{align*}
                \alpha \nabla f(x^0) + \sum_{j = 1}^{l} \beta_j \nabla g_{i_j}(x^0) = 0
            \end{align*}
            and $\beta_j = 0$ for some $j \in \{1,\dots,l\}$.
        \end{lemma}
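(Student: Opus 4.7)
The plan is to extract a sequence in $P_c$ converging to $x^0$ whose essentially active sets stabilize at some fixed $I' \neq I^e(x^0)$, pass to the limit in the Pareto critical KKT condition written over $I'$, and then pad with zeros on $I^e(x^0) \setminus I'$. The crucial observation will be that $I' \subsetneq I^e(x^0)$, which both guarantees that the padding is legal and automatically produces the vanishing $\beta_j$ demanded by the conclusion.

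First I would apply the hypothesis to the shrinking balls $B(x^0, 1/m)$ to obtain $x^m \in P_c \cap B(x^0, 1/m)$ with $I^e(x^m) \neq I^e(x^0)$. Since the power set $2^{\{1,\dots,k\}}$ is finite, a subsequence can be extracted along which $I^e(x^m) = I'$ is constant with $I' \neq I^e(x^0)$. The key claim is then $I' \subseteq I^e(x^0)$: for every $i \in I'$, the definition yields $x^m \in \cl(\interior(\{ y : g(y) = g_i(y) \}))$, which is a closed set independent of $m$, so its limit $x^0$ lies in it as well, giving $i \in I^e(x^0)$. Together with $I' \neq I^e(x^0)$ this forces $I' \subsetneq I^e(x^0)$, so $I^e(x^0) \setminus I'$ is nonempty.

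For the KKT step, I would combine \eqref{eq:KKT_convex_comb} with Lemma \ref{lem:PC_subdiff} at each $x^m \in P_c$ to produce nonnegative scalars $\alpha^m$ and $(\beta^m_i)_{i \in I'}$ summing to one with $\alpha^m \nabla f(x^m) + \sum_{i \in I'} \beta^m_i \nabla g_i(x^m) = 0$. These coefficients live in a compact simplex, so a further subsequence converges to some $(\alpha, \beta)$ in the same simplex, and continuity of $\nabla f$ and of each $\nabla g_i$ lets me pass to the limit, yielding $\alpha \nabla f(x^0) + \sum_{i \in I'} \beta_i \nabla g_i(x^0) = 0$. Extending $\beta$ to an $l$-vector by setting $\beta_i = 0$ for every $i \in I^e(x^0) \setminus I'$ delivers the representation claimed in the lemma together with at least one vanishing coefficient. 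The hard part is really only the inclusion $I' \subseteq I^e(x^0)$, which hinges on essential activity being encoded as membership in a closed set and would not follow from ordinary activity alone; everything after that is a standard compactness-and-continuity argument.
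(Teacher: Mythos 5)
Your proposal is correct and follows essentially the same route as the paper's proof: extract a sequence with constant essentially active set $I' \subsetneq I^e(x^0)$, write the KKT condition over $I'$ via Lemma \ref{lem:PC_subdiff}, pass to the limit by compactness of the simplex and continuity of the gradients, and pad with zeros. The only difference is cosmetic: you spell out the closedness argument behind $I' \subseteq I^e(x^0)$, which the paper dismisses with ``due to the definition of the essentially active set.''
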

        \begin{proof}
            See \ref{SM:proof_lem_PcIOmegaI_touch} in the supplementary material.
        \end{proof}
        
        A visualization of the previous lemma can be seen in Example \ref{example:simple_kink}: In $x^0 = (1,0)^\top$, the sets $P_c^{\{1,2\}} \cap \Omega^{\{1,2\}}$ and $P_c^{\{1\}} \cap \Omega^{\{1\}}$ touch and there is a convex combination with a zero component (cf.\ \eqref{eq:example_simple_kink_coeffs}). In this case, this causes a kink in $P_c$.
        
        Note that in general, the existence of a coefficient vector with a zero component as in Lemma \ref{lem:PcIOmegaI_touch} is not a useful criterion to find points in $P_c$ where the active set changes. For example, by Lemma \ref{lem:Pc_caratheodory}, if the number of essentially active selection functions in $x^0$ is larger than $\affdim(\aff(\{ \nabla f(x^0) \} \cup \partial g(x^0)))$, then there is always a coefficient vector with a zero component. A stricter condition would be that \emph{every} coefficient vector has a zero component, i.e., that zero is located on the relative boundary of $\conv(\{ \nabla f(x^0) \} \cup \partial g(x^0))$ (cf.\ Definition~\ref{def:relative_interior}). By Lemma \ref{lem:zero_in_relative_int}, this would imply that Assumption \ref{assum:A3} cannot hold, such that $P_c^I \cap \Omega^I$ may be nonsmooth around $x^0$. Although the theory suggests (and we will later explicitly see this in Example \ref{ex:Exact_Penalty_Ex2}) that this must not necessarily be the case in points where the active set changes, we believe it may be a useful criterion in practice. 
        
        Nonetheless, from a theoretical point of view, the only reliable assumption we can make to exclude points where the essentially active set changes is the following: 
        
        \begin{assumption} \label{assum:A5}
            For $x^0 \in P_c$ and a set of selection functions $\{ g_1, \dots, g_k \}$ of $g$, there is an open neighborhood $U \subseteq \R^n$ of $x^0$ such that
            \begin{align*}
                I^e(x) = I^e(x^0) \quad \forall x \in P_c \cap U.
            \end{align*}
        \end{assumption}

        \begin{table} 
        \caption{An overview of the five assumptions required to have a smooth structure of $P_c$ around $x^0 \in P_c$.}
        \begin{center}
        \begin{tabular}{|c |l|}
            \hline
            \multicolumn{2}{|l|}{}\\[-1.0em]
            \multicolumn{2}{|l|}{Let $x^0 \in P_c$.\hfill}\\
            \multicolumn{2}{|l|}{}\\[-1.0em]
            \hline
             & \\[-1.0em]
            \ref{assum:A1} & There is an open nbd. $U \ni x^0$ and a set of sel. fct. $\{ g_1, \dots, g_k \}$ of $g|_U$ with\\  
            & ~~\textit{(i)}  $\quad I(x^0) = \{1,\dots,k\}$,\\
            &  ~\textit{(ii)} $\quad I^e(x) = I(x) \quad \forall x \in U$,\\
            & \textit{(iii)} $\quad\affdim(\aff(\{\nabla g_i(x) : i \in \{1,\dots,k \} \}))$  \\
            & $\hspace{32pt} =\affdim(\aff(\{ \nabla g_i(x^0) : i \in \{1,\dots,k \} \})) \ \forall x \in U$.\\
             & \\[-1.0em]
            \hline
             & \\[-1.0em]
            \ref{assum:A2}  & It holds $\nabla f(x^0) \notin \aff(\partial g(x^0))$.\\
             & \\[-1.0em]
            \hline
             & \\[-1.0em]
            \ref{assum:A3} & Let $\{ g_1, \dots, g_k \}$ be a set of selection functions of $g$. \\
            & It exists $\{ i_1, \dots, i_r \} \subseteq \{1,\dots,k\}$ and $\alpha^0 \in \R$, $\beta^0 \in \R^r$\\ 
            & with $\alpha^0 + \sum_{j = 1}^r \beta^0_j = 1$ such that \\
            &$\left.\begin{array}{l} 
            \hspace{3pt} \textit{(i)} \quad  r = \affdim(\aff(\{ \nabla f(x^0) \} \cup \partial g(x^0))), \\
            \textit{(ii)} \quad \{ \nabla f(x^0) \} \cup \{ \nabla g_i(x^0) : i \in \{i_1,\dots,i_r \}\} \text{ aff. ind.},
            \end{array} \right\}$ (cf.\ Lemma~\ref{lem:Pc_caratheodory})\\
            & \hspace{-0.15pt} \textit{(iii)} \hspace{3pt} $\alpha^0 \nabla f(x^0) + \sum_{j = 1}^r \beta^0_j \nabla g_{i_j}(x^0) = 0$, \\
            & \hspace{1.4pt} \textit{(iv)} \hspace{3pt} $\alpha^0 > 0$, $(\beta^0)_j >0$.\\
             & \\[-1.0em]
            \hline
             & \\[-1.0em]
            \ref{assum:A4} & Assume that \ref{assum:A1}, \ref{assum:A2} and \ref{assum:A3} hold and
            let $h$ be defined as in Lemma~\ref{lem:intersec_level_set}.\\
             & \textit{(a)} $\quad \rk(Dh(x,\alpha,\beta)) = n + r \quad \forall (x,\alpha,\beta) \in h^{-1}(0)$ \textbf{or}\\
             & \textit{(b)} $\quad \rk(Dh(x,\alpha,\beta)) \text{ is constant } \quad \forall (x,\alpha,\beta) \in \R^n \times \R^{>0} \times (\R^{>0})^r$.\\
              & \\[-1.0em]
            \hline
             & \\[-1.0em]
            \ref{assum:A5} &  Let $\{ g_1, \dots, g_k \}$ be a set of selection functions of $g$.\\ 
            & There is an open neighborhood $U \ni x^0$ with $I^e(x) = I^e(x^0) \quad \forall x \in P_c \cap U$.\\
            \hline
        \end{tabular} \label{tab:assum_overview}
        \end{center}
        \end{table}
        
        From our considerations up to this point it follows that if $x^0 \in P_c$ is a point in which Assumptions \ref{assum:A1} to \ref{assum:A5} hold (for the same set of selection functions), then $P_c$ is the projection of a smooth manifold around $x^0$ as in Theorem \ref{thm:h_level_set_manifold}. An overview of all five assumptions is shown in Table \ref{tab:assum_overview}. Unfortunately, in contrast to Assumptions \ref{assum:A1}, \ref{assum:A2}, \ref{assum:A3} and \ref{assum:A4}, \ref{assum:A5} is only an a posteriori condition, i.e., we already have to know $P_c$ around $x^0$ to be able to check if Assumption \ref{assum:A5} holds.

        \begin{remark}
            If Assumption \ref{assum:A5} is violated in $x^0 \in P_c$, then there are Pareto critical points arbitrarily close to $x^0$ with a different (essentially) active set $I' \neq I^e(x^0)$. In practice, it may be of interest to find $I'$. For example, in path-following methods, $I'$ could be used to compute the direction in which $P_c$ continues once the nonsmoothness in $x^0$ was detected. To this end, let $\{ g_1, \dots, g_k \}$ be the set of selection functions which are all essentially active at $x^0$. While it is not possible to determine $I'$ solely from the set $\conv(\{ \nabla f(x^0) \} \cup \partial g(x^0)) = \conv(\{ \nabla f(x^0) \} \cup \{ \nabla g_1(x^0), \dots, \nabla g_k(x^0) \})$, we can at least determine all potential candidates for $I'$ by finding all subsets $\{ i_1, \dots, i_m \} \subseteq \{ 1, \dots, k \}$ with
            \begin{align*}
                0 \in \conv(\{ \nabla f(x^0) \} \cup \conv(\{ \nabla g_{i_1}(x^0), \dots, \nabla g_{i_m}(x^0) \})).
            \end{align*}
        \end{remark}

\section{Examples} \label{sec:examples}
    
    In this section, we will show how our results from Section \ref{sec:structure_of_regularization_path} can be used to analyze the structure of regularization paths in two common applications. These are \emph{support vector machines} (SVMs) in data classification \cite{HTF2009} and the \emph{exact penalty method} in constrained optimization \cite{NW2006,PG1989}.
    
    \subsection{Support vector machine} \label{sec:SVM}
        
        Given a \emph{data set} $\{ (x^i, y^i) : x^i \in \R^l, y^i \in \{-1,1\}, i \in \{1,\dots,N\} \}$, the goal of the \emph{support vector machine} (SVM) is to find $w \in \R^l$ and $b \in \R$ such that
        \begin{align*}
            \sign(w^\top x^i + b) = y^i \quad \forall i \in \{1,\dots,N\}.
        \end{align*}
        In other words, the goal is to find a hyperplane $\{ x \in \R^l : w^\top x + b = 0 \}$ such that all $x^i$ with $y^i = 1$ lie on one side and all $x^i$ with $y^i = -1$ lie on the other side of the hyperplane. 
        Since such a hyperplane may not be unique, an additional goal is to find the one where the minimal distance of the $x^i$ to the hyperplane, also known as the \emph{margin}, is as large as possible. 
        One way of solving this problem is the penalization approach
        \begin{align} \label{eq:SVM}
            \min_{(w,b) \in \R^l \times \R} f(w,b) + \lambda g(w,b)
        \end{align}
        for $\lambda \geq 0$ and
        \begin{align*}
            &f : \R^l \times \R \rightarrow \R, \quad (w,b) \mapsto \frac{1}{2} \| w \|_2^2, \\
            &g : \R^l \times \R \rightarrow \R, \quad (w,b) \mapsto \sum_{i = 1}^N \max\{0, 1 - y^i (w^\top x^i + b) \}.
        \end{align*}
        
        Roughly speaking, minimizing $g$ ensures that the hyperplane separates the data, while minimizing $f$ maximizes the margin. In theory, the most favorable hyperplane would be the one with $g(w,b) = 0$ (if existent) and $f(w,b)$ as small as possible. But in practice, when working with large and noisy data sets, an imperfect separation where only few points violate the separation may be more desirable. The balance between the margin and the quality of the separation can be controlled via the parameter $\lambda$ in \eqref{eq:SVM}, yielding a regularization path $R_{\text{SVM}}$ as in \eqref{eq:regularization_path} (for $n = l+1$).
        
        \begin{remark}
            In the literature, the roles of $f$ and $g$ in problem \eqref{eq:SVM} are typically reversed. The resulting problem is equivalent to our formulation with the regularization parameter $\frac{1}{\lambda}$ (except for critical points of $f$ and $g$) (cf.\ Section 12.3.2 in \cite{HTF2009}). Nonetheless, when the regularization path of the SVM is considered, $\lambda$ in \eqref{eq:SVM} is more commonly used for its parametrization. 
        \end{remark}

        The structure of the regularization path of the SVM was already considered in earlier works. In \cite{HRT2004}, it was shown that $R_{\text{SVM}}$ is $1$-dimensional and piecewise linear up to certain degenerate points, and a path-following method was proposed that exploits this structure. It was conjectured (without proof) that the existence of these degenerate points is related to certain properties of the data points $(x^i,y^i)$, like having duplicates of the same point or having multiple points with the same margin. In \cite{OSY2010}, these degeneracies were analyzed further and a modified path-following method was proposed, specifically taking degenerate data sets into account. Other methods for degenerate data sets were proposed in \cite{DCM2013,SAG2016,WZC2019}. In the following, we will analyze how these degeneracies relate to the nonsmooth points we characterized in our results.

        Obviously, $f$ is twice continuously differentiable and $g$ is $PC^2$ with selection functions
        \begin{align*}
            \left\{ (w,b) \mapsto \sum_{i \in I} 1 - y^i (w^\top x^i + b) : I \subseteq \{1,\dots,N\} \right\}.
        \end{align*}
        Furthermore, both $f$ and $g$ are convex, so $R_{\text{SVM}}$ coincides with the critical regularization path (cf.\ \eqref{eq:critical_regularization_path}). Thus, we can apply our results from Section \ref{sec:structure_of_regularization_path} to analyze the structure of $R_{\text{SVM}}$. Since $f$ is quadratic and all selection functions are linear, Remark \ref{SM:rem_quadratic_piecewise_linear} shows that the regularization path is piecewise linear up to points violating the Assumptions \ref{assum:A1} to \ref{assum:A5}. Due to the properties of $g$, the Assumption \ref{assum:A1} always holds for the SVM, as shown in Remark \ref{SM:rem_SVM_A1} in the supplementary material.
        
        In the following, we will consider the remaining Assumptions \ref{assum:A2} to \ref{assum:A5} in the context of the SVM and relate them to the degeneracies reported in \cite{HRT2004}. We will do this by considering Example 1 from \cite{OSY2010}, which was specifically constructed to have a degenerate regularization path.
        
        \begin{example} \label{example:SVM_OSY2010_1}
            Consider the data set
            \begin{multline*}
                \left\{ ((0.7,0.3)^\top,1), ((0.5,0.5)^\top,1), ((2,2)^\top,-1),\right.\\ \left.((1,3)^\top,-1), ((0.75,0.75)^\top,1), ((1.75,1.75)^\top,-1) \right\}.
            \end{multline*}
            The regularization path for this data set can be computed analytically and is shown in Figure \ref{fig:reg_path_SVM_OSY2010_1}(a).
            \begin{figure}[ht] 
			    \centering 
			  	\subfloat[]{\includegraphics[width=0.49\textwidth]{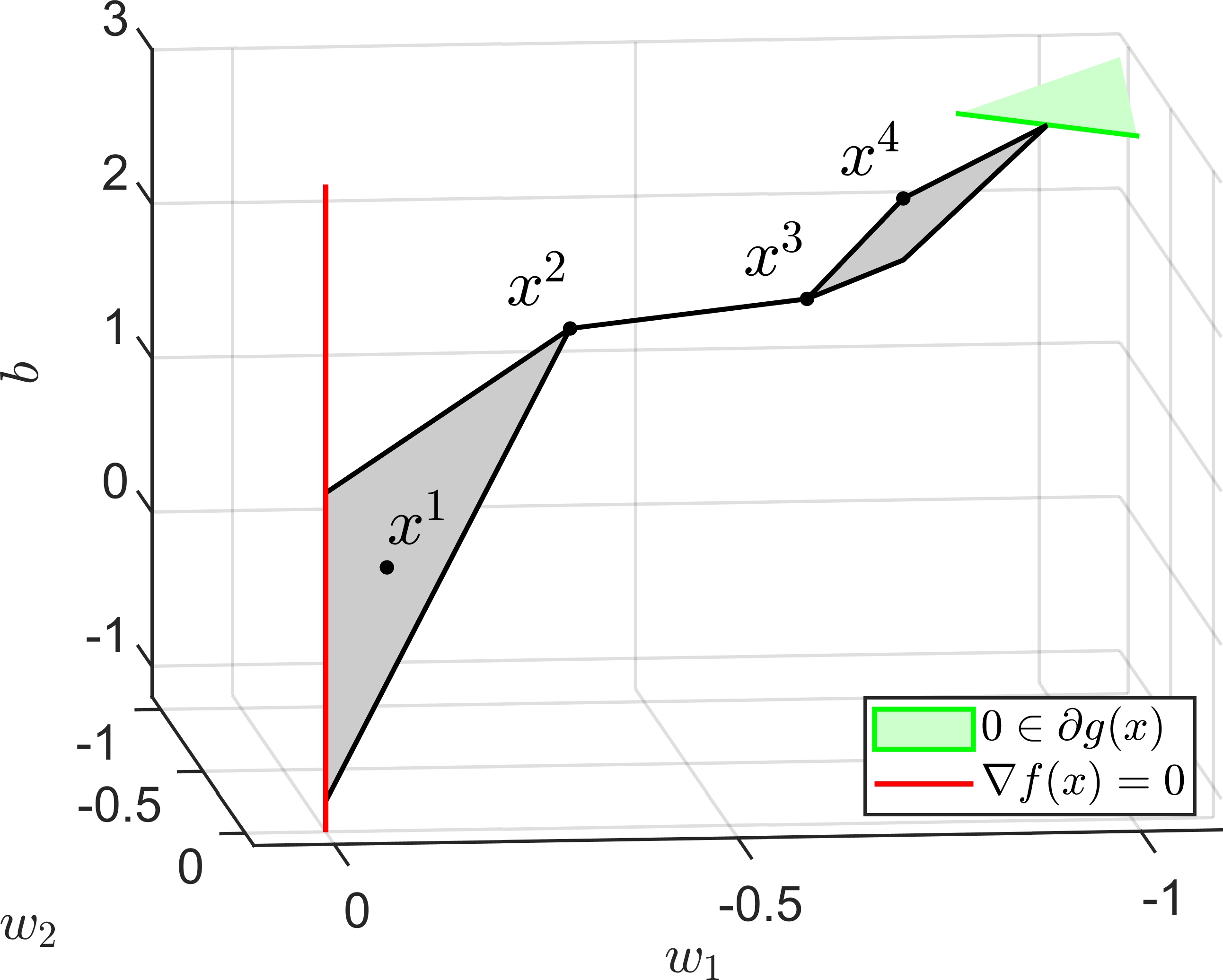}}
			  	\hfill
                \subfloat[]{\includegraphics[width=0.42\textwidth]{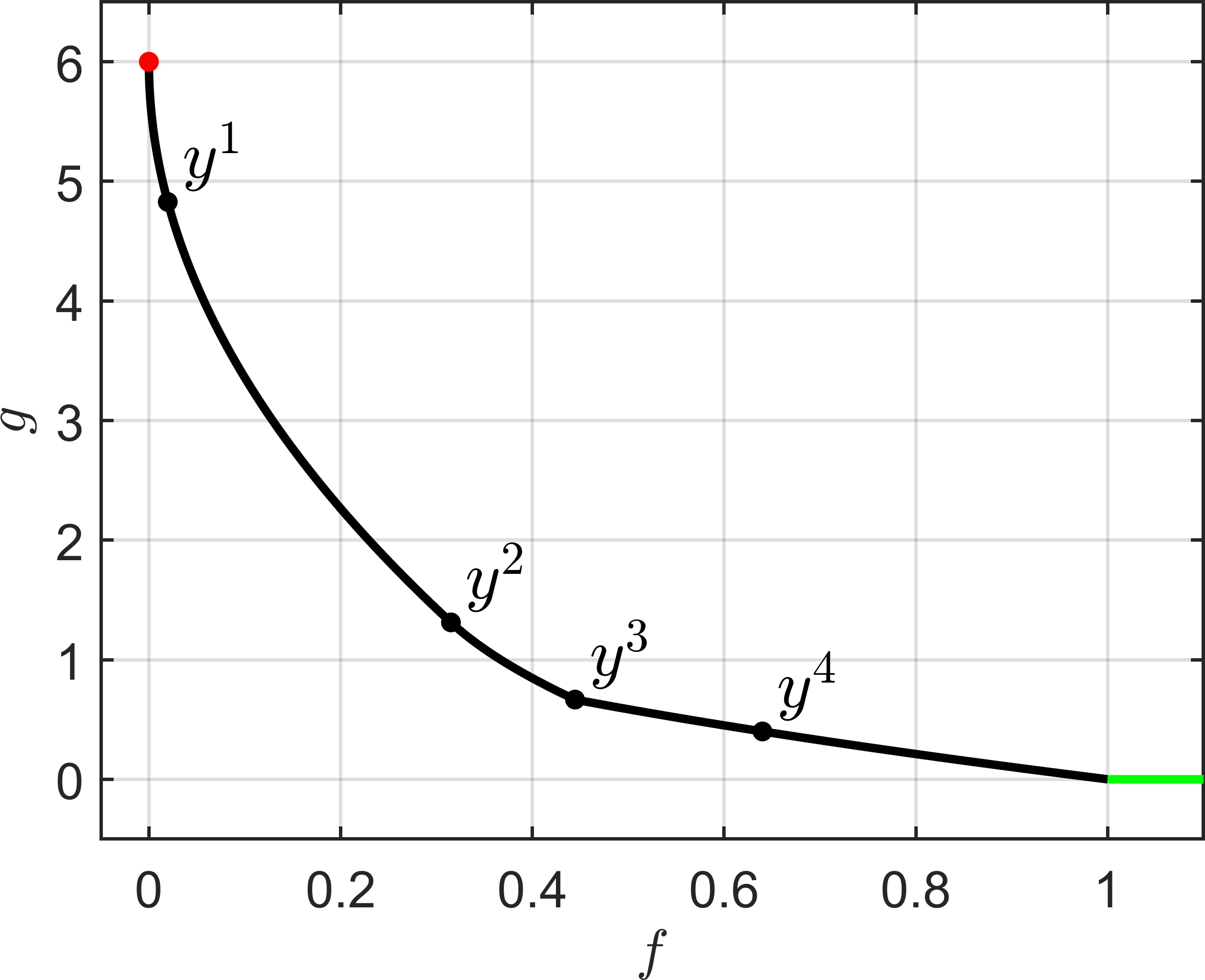}}
			    \caption{(a) Regularization path of the SVM in Example \ref{example:SVM_OSY2010_1} and the points $x^1 = \frac{1}{372} (-35,-65,137)^\top$, $x^2 = \frac{1}{93} (-35, -65, 137)^\top$, $x^3 = \frac{1}{3} (-2,-2,5)^\top$ and $x^4 = \frac{1}{5} (-4,-4,11)^\top$. (b) Image of the regularization path with $y^i = (f(x^i),g(x^i))^\top$, $i \in \{1,\dots,4\}$, and the same coloring as in (a).}
			    \label{fig:reg_path_SVM_OSY2010_1}
		    \end{figure}
		    In the following, we will analyze the points $x^1$, $x^2$, $x^3$ and $x^4$ highlighted in Figure \ref{fig:reg_path_SVM_OSY2010_1}(a) with respect to the Assumptions \ref{assum:A2} to \ref{assum:A5}.
		    
		    The point $x^1$ lies in one of the $2$-dimensional parts of the regularization path and it is possible to show that $g$ is smooth around $x^1$. It is easy to verify that Assumptions \ref{assum:A2}, \ref{assum:A3} and \ref{assum:A5} are satisfied. With regard to Assumption \ref{assum:A4}, it holds $r = \affdim(\aff(\{ \nabla f(x^1) \} \cup \partial g(x^1))) = 1$ (cf.\ Lemma~\ref{lem:Pc_caratheodory}) and
		    \begin{align*}
		        Dh(x,\alpha,\beta) = 
		        \begin{pmatrix}
		            2 \alpha     & 0          & 0 & -\frac{35}{372} & \frac{14}{5} \\[1pt]
		            0            & 2 \alpha   & 0 & -\frac{65}{372} & \frac{26}{5} \\
		            0            & 0          & 0 & 0               & 0 \\
		            0            & 0          & 0 & 1               & 1
		        \end{pmatrix}
		    \end{align*}
		    with $\rk(Dh(x,\alpha,\beta)) = 3$ for all $(x,\alpha,\beta) \in \R^n \times \R^{>0} \times \R^{>0}$. Thus, \ref{assum:A4}(b) holds which by Theorem \ref{thm:h_level_set_manifold} implies that the regularization path is the projection of an $n + r + 1 - m = 3 + 1 + 1 - 3 = 2$ dimensional manifold around $x^1$, as expected.
		    
		    The point $x^2$ lies in a kink in the regularization path. The subdifferential of $g$ in $x^2$ can be computed analytically and is shown in Figure \ref{fig:reg_path_SVM_OSY2010_2}(a).
		     \begin{figure}[ht] 
				\centering
				\subfloat[]{\includegraphics[width=0.33\textwidth]{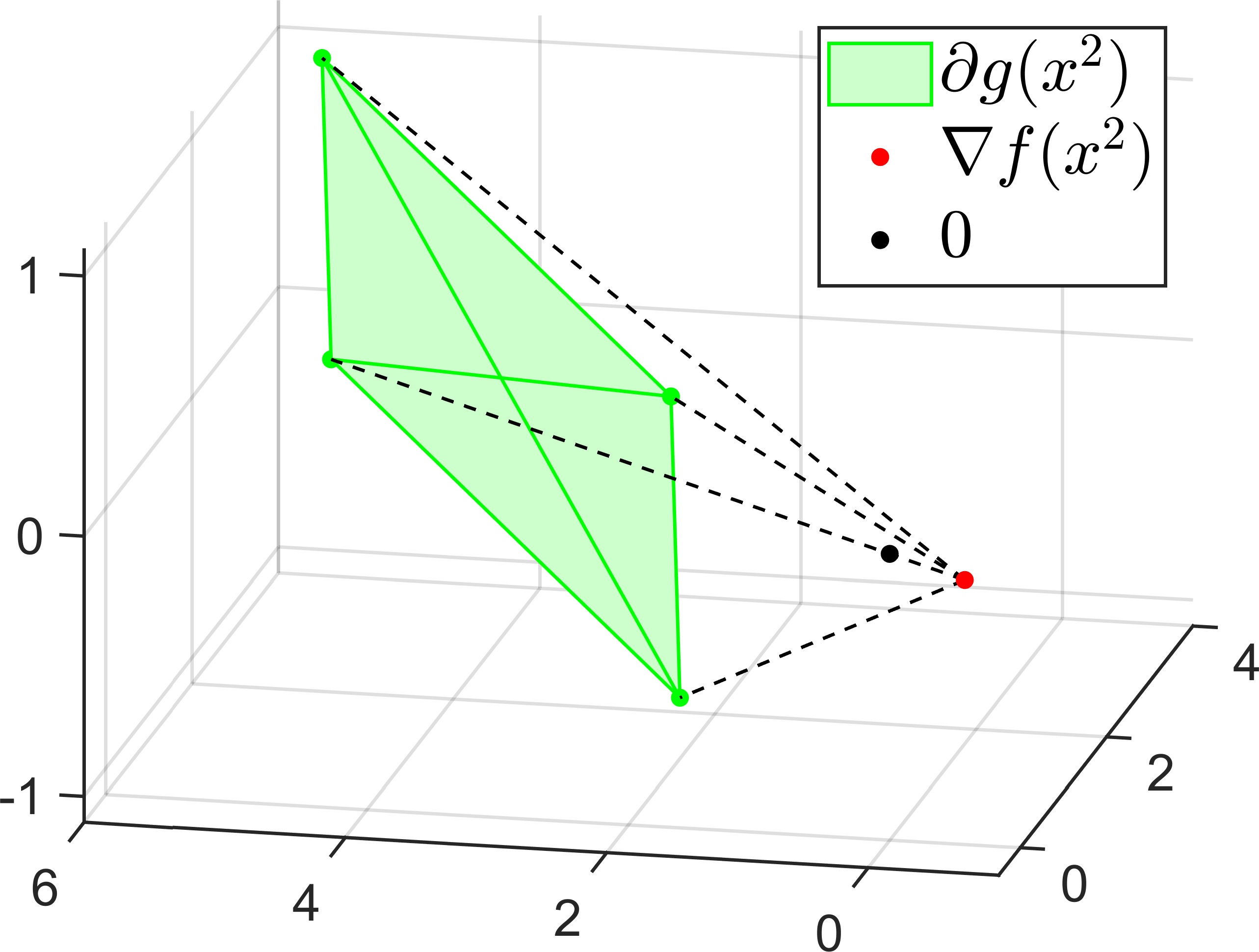}}
             	\subfloat[]{\includegraphics[width=0.33\textwidth]{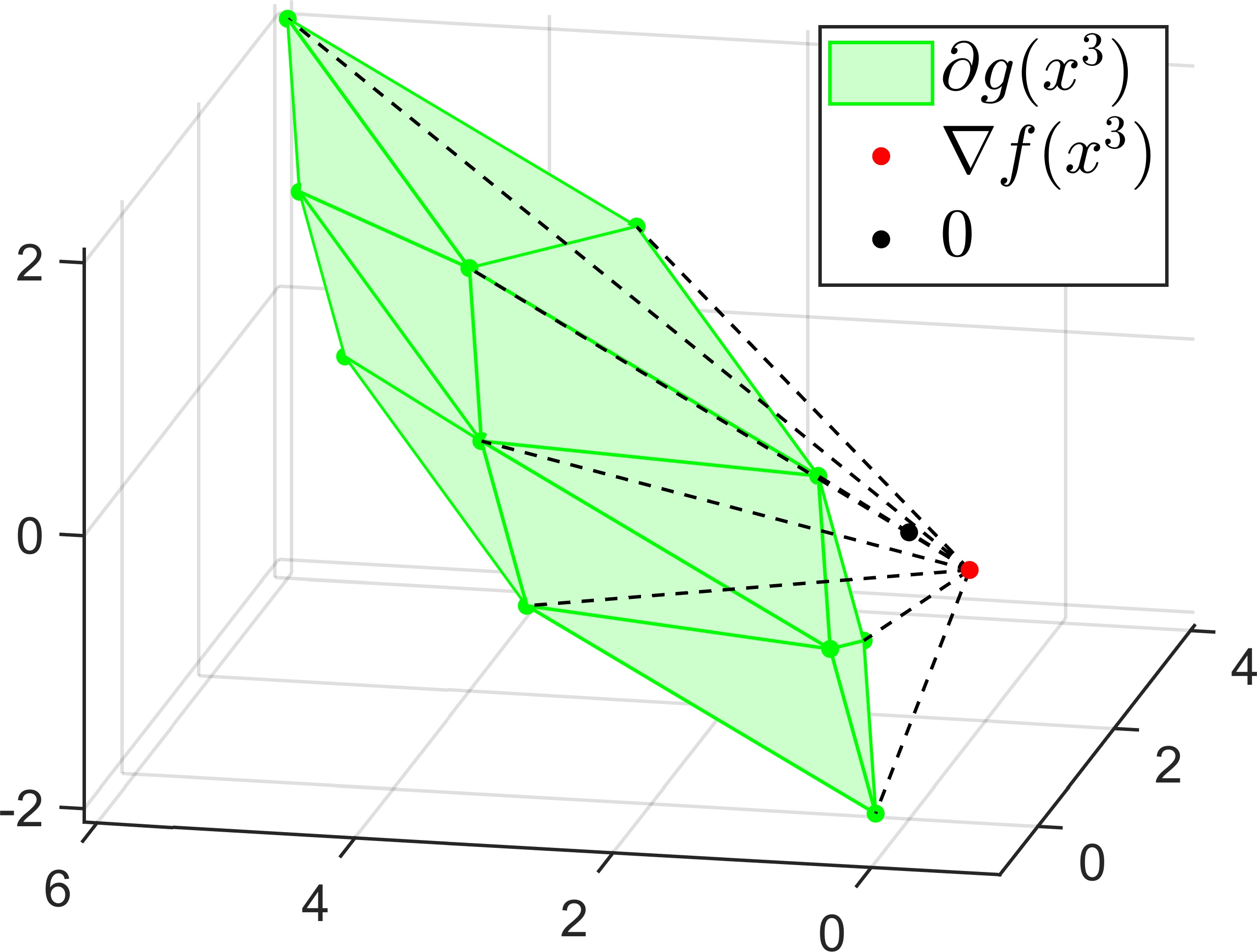}}
             	\subfloat[]{\includegraphics[width=0.33\textwidth]{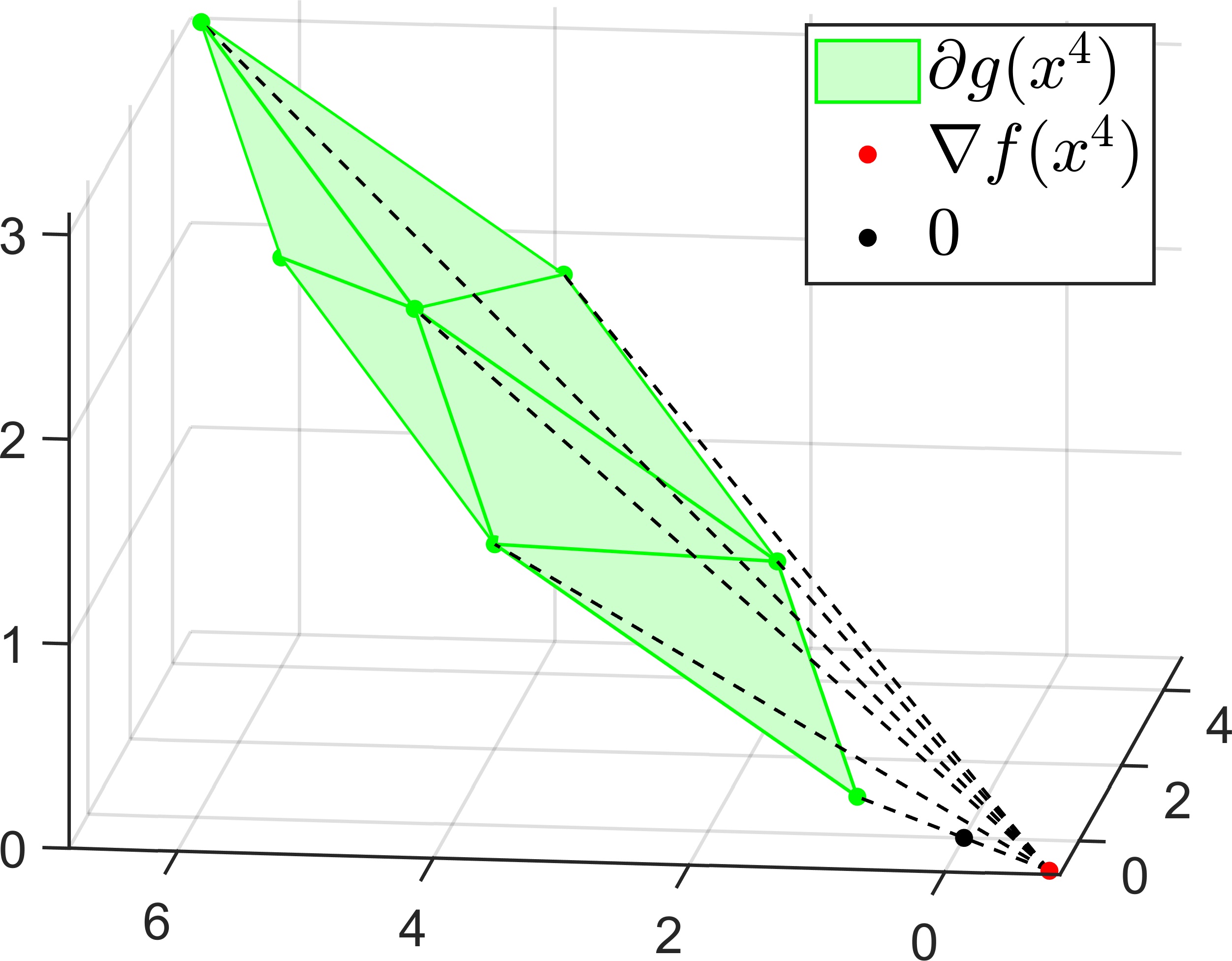}}
     			\caption{Gradient of $f$, subdifferential of $g$ and the (relative) boundary of the convex hull (dashed)  in $x^2$, $x^3$ and $x^4$ in Example \ref{example:SVM_OSY2010_1}.}
     			\label{fig:reg_path_SVM_OSY2010_2}
		    \end{figure}
		    In this case, we have $\affdim(\aff(\partial g(x^2))) = 2$ and $\nabla f(x^2) \notin \aff(\partial g(x^2))$, so Assumption \ref{assum:A2} holds. We see that zero lies on the relative boundary of $\conv(\{ \nabla f(x^2) \cup \partial g(x^2)\})$ such that Assumption \ref{assum:A3} must be violated (by Lemma \ref{lem:zero_in_relative_int}). Furthermore, it is possible to show that the active set changes in $x^2$, so Assumption \ref{assum:A5} is violated as well.
		   
		    The point $x^3$ lies in another kink of the regularization path. The corresponding subdifferential of $g$ is shown in Figure \ref{fig:reg_path_SVM_OSY2010_2}(b). As for $x^2$, Assumptions \ref{assum:A3} and \ref{assum:A5} are violated in $x^3$. But in contrast to $x^2$ we have $\affdim(\aff(\partial g(x^2))) = 3$, so $\nabla f(x^2) \in \aff(\partial g(x^2)) = \R^3$ trivially holds and Assumption \ref{assum:A2} is violated. As discussed in Remark \ref{rem:kink_front}, this results in a kink in the Pareto front in the image of  $x^3$ under the objective vector $(f,g)$, as can be seen in Figure \ref{fig:reg_path_SVM_OSY2010_1}(b).
		    
		    Finally, $x^4$ marks a corner of one of the $2$-dimensional parts of the regularization path and the corresponding subdifferential is shown in \ref{fig:reg_path_SVM_OSY2010_2}(c). As for $x^3$, Assumptions \ref{assum:A2}, \ref{assum:A3} and \ref{assum:A5} are violated in $x^4$. But unlike $x^3$, when we consider the image of $x^4$ in Figure \ref{fig:reg_path_SVM_OSY2010_1}(b), we see that the there is no kink in $y^4$. This suggests that the KKT multiplier of $f$ is unique even though Assumption \ref{assum:A2} is violated. Note that this is not a contradiction to Lemma \ref{lem:alpha1_unique} b), as $0$ lies on the relative boundary of $\conv(\{ \nabla f(x^4) \} \cup \partial g(x^4))$.
        \end{example}

    \subsection{Exact penalty method} \label{sec:exact_penalty_method}
        
        Consider the constrained optimization problem
        \begin{equation}\label{eq:constrained_OP}
            \begin{aligned}
                \min_{x \in \R^n} f(x) & \\
                s.t. \quad c^1_i(x) &\le 0, \quad i \in \{ 1,\dots,p \}, \\
                c^2_j(x) &= 0, \quad j \in \{ 1,\dots,q \},
            \end{aligned}
        \end{equation}
        where $f:\R^n \rightarrow \R$, $c^1_i:\R^n\rightarrow\R$, $i \in \{ 1,\dots,p \}$, and $c^2_j:\R^n \rightarrow \R$, $j \in \{ 1,\dots,q \}$, are continuously differentiable.
        In order to solve \eqref{eq:constrained_OP} the so-called \emph{exact penalty method} can be used, where the idea is to solve the (nonsmooth) problem
        \begin{equation}\label{eq:exact_penalty_function}
            \min_{x \in \R^n} f(x) + \lambda g(x)
        \end{equation}
        with a penalty weight $\lambda \geq 0$ and
        \begin{equation*}
            g : \R^n \rightarrow \R, \quad x \mapsto \left(\sum_{i=1}^p \max(c^1_i(x),0) + \sum_{j=1}^q |c^2_j(x)|\right).
        \end{equation*}
        It is easy to see that $g$ is $PC^1$ and that a set of selection functions is given by
        \begin{equation} \label{eq:exact_penalty_selection_functions}
            \left\{g_{\theta,\sigma}:\R^n\rightarrow \R, \ x \mapsto \sum_{i=1}^p \theta_i c_i^1(x) + \sum_{j =1}^{q} \sigma_i c_j^2(x) : \theta \in \{0,1\}^p, \sigma \in \{-1,1\}^q \right\}.
        \end{equation}
        
        The method is based on the theoretical result that there is some $\bar{\lambda}>0$ such that every strict local minimizer of \eqref{eq:constrained_OP} is a local minimizer of \eqref{eq:exact_penalty_function} for every $\lambda > \bar{\lambda}$, i.e., if $\lambda$ is large enough, then the constrained problem \eqref{eq:constrained_OP} can be solved via the unconstrained problem \eqref{eq:exact_penalty_function} (cf.\ \cite{NW2006}, Theorem 17.3). In practice, problem \eqref{eq:exact_penalty_function} will become ill-conditioned if $\lambda$ is large compared to $\bar{\lambda}$. Thus, it is instead solved for multiple, increasing values of $\lambda$ until a feasible solution is found.
        This results in a regularization path $R$ as in \eqref{eq:regularization_path}.
        Note that all feasible points of \eqref{eq:constrained_OP} are critical points of $g$ and the minimizer of \eqref{eq:constrained_OP} is typically the first intersection of the regularization path with the feasible set (when starting in the minimizer of $f$). In particular, the existence of $\bar{\lambda}$ as above implies that the minimizer of \eqref{eq:constrained_OP} is contained in $R$.
        
        In \cite{ZL2013}, $R$ is analyzed for the case where $f$ is quadratic (and strictly convex) and all $c^1_i$ and $c^2_j$ are affinely linear. In this case, $R$ coincides with the critical regularization path $R_c$ (cf.\ \eqref{eq:critical_regularization_path}). It is shown that $R$ is piecewise linear, which coincides with our results in Remark \ref{SM:rem_quadratic_piecewise_linear}. In \cite{ZL2015}, the more general case where $f$ and all $c_i^1$ are convex and all $c_j^2$ are affinely linear is considered. There, it still holds $R = R_c$ and it is shown that $R$ is piecewise smooth with kinks occurring where the constraints become satisfied or violated.
        
        Here, we want to use our theory to analyze the critical regularization path $R_c$ in the more general setting where $f$, $c_i^1$ and $c_j^2$ are merely continuously differentiable. By our results in Section \ref{sec:structure_of_regularization_path}, we know that $R_c$ is piecewise smooth up to points where the Assumptions \ref{assum:A1} to \ref{assum:A5} are violated. 
        %In the following remark, we show that
        In Remark \ref{SM:rem_exact_penalty_A1} in the supplementary material, it is shown that if all $x \in \R^n$ satisfy the \emph{linear independence constraint qualification} (LICQ), i.e., if 
        \begin{align} \label{eq:LICQ}
            \{ \nabla c_i^1(x) : c_i^1(x) = 0 \} \cup \{ \nabla c_j^2(x) : c_j^2(x) = 0 \}
        \end{align}
        is linearly independent for all $x \in \R^n$, then Assumption \ref{assum:A1} always holds and only Assumptions \ref{assum:A2} to \ref{assum:A5} may cause nonsmoothness in $R_c$.
        %For the remaining assumptions \ref{assum:A2} to \ref{assum:A5} 
        For these remaining assumptions we consider the following example, where the feasible set is given by continuously differentiable but nonconvex inequality constraints. It is inspired by problem (15) in \cite{ZL2015}.
        \begin{example} \label{ex:Exact_Penalty_Ex2}
            Consider the constrained optimization problem \eqref{eq:constrained_OP} with 
            \begin{equation*}
                \begin{aligned}
                    f(x) &= \frac{1}{2} x_1^2 + x_2^2 - x_1 x_2 + \frac{1}{2} x_1 - 2 x_2,\\
                    c^1_1(x) &= - \left( \left(x_1-\frac{1}{2} \right)^2 + x_2^2 - 1 \right),\\
                    c^1_2(x) &= \left( x_1+\frac{1}{2} \right)^2 + x_2^2 - 1,\\
                    c^1_3(x) &= - \left( x_1^2 + \left( x_2 - \frac{1}{2} \right)^2 - 1 \right).
                \end{aligned}
            \end{equation*}
            The corresponding critical regularization path $R_c$ of \eqref{eq:exact_penalty_function} can be computed analytically and is shown in black in Figure~\ref{fig:Exact_Penalty_Ex2}(a), consisting of two disconnected paths.
            \begin{figure}[ht] 
		    	\centering 
		    	\subfloat[]{\includegraphics[width=0.49\textwidth]{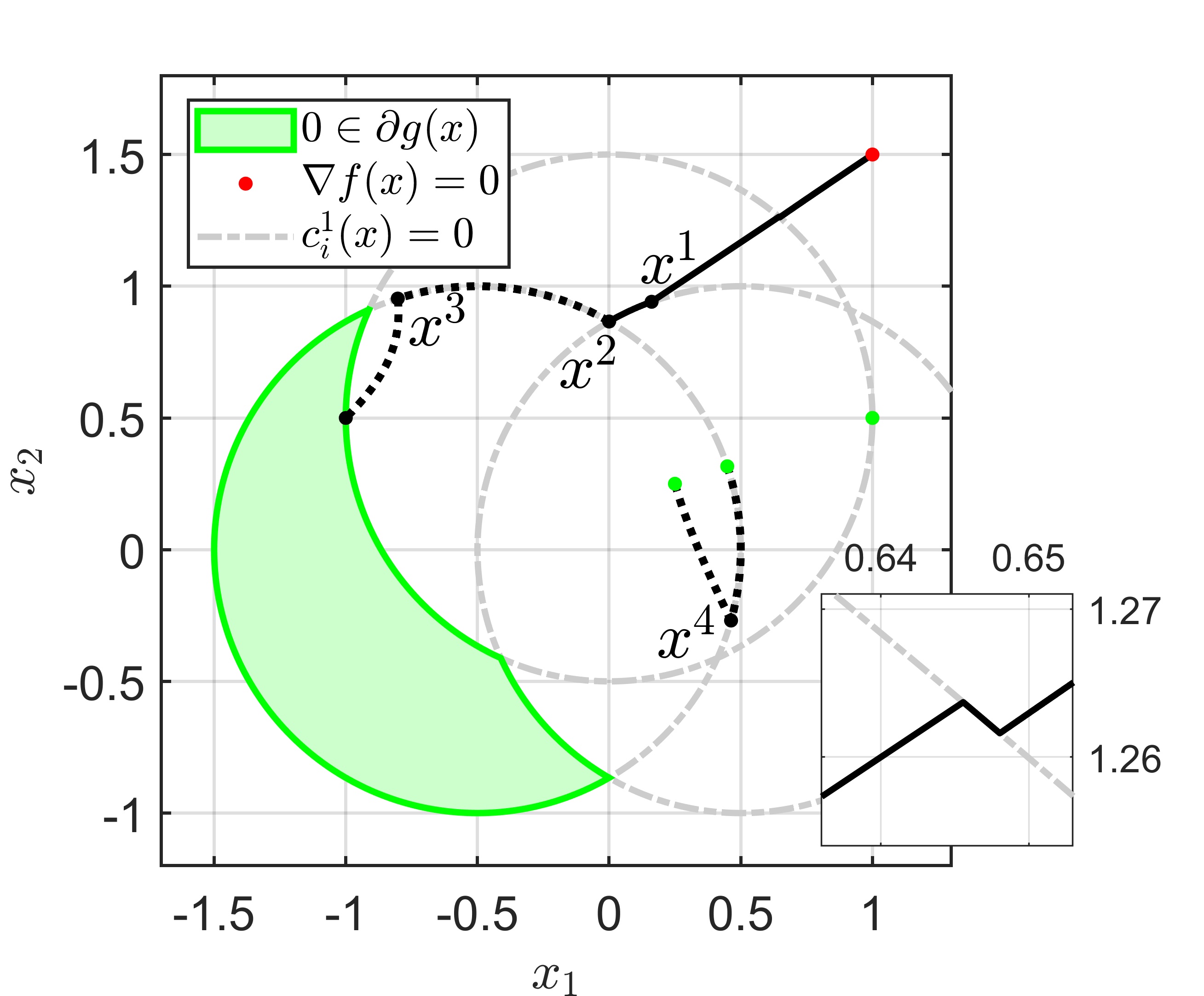}}
		    	\hfill
			    \subfloat[]{\includegraphics[width=0.49\textwidth]{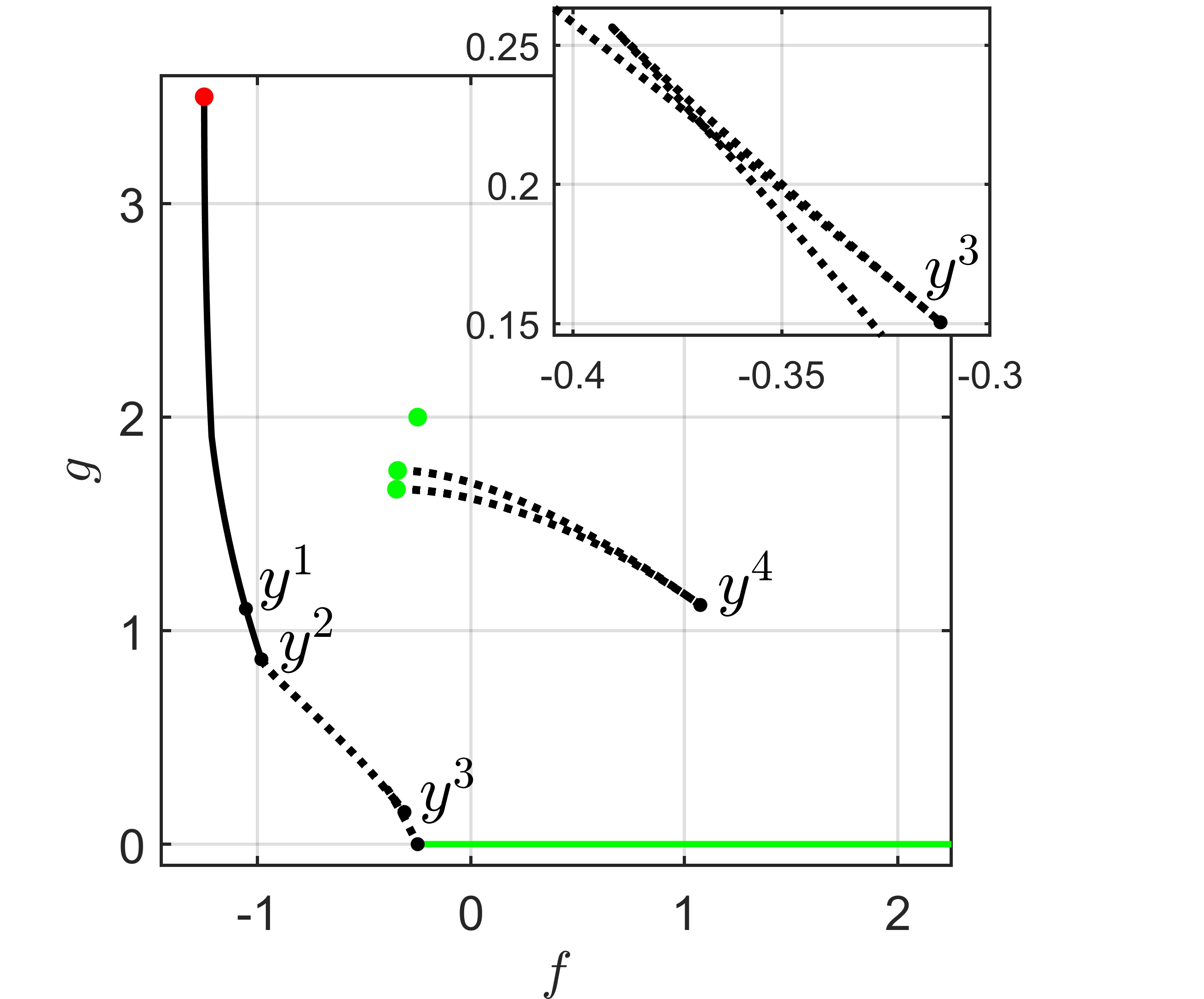}}
    		    \caption{(a) $R$ (black, solid) and $R_c$ (black) for the exact penalty method in Example~\ref{ex:Exact_Penalty_Ex2} and the points $x^1\approx(0.1614,0.9409)^\top$,
    		    $x^2=(0,\frac{\sqrt{3}}{2})^\top$,
    		    $x^3\approx(-0.8027, 0.9531)^\top$,
    		    $x^4\approx(0.4631, -0.2691)^\top$ with a zoom of the intersection of $c_3^1(x) = 0$ and $R_c$.
    		    (b) Image of $R_c$ with $y^i = (f(x^i),g(x^i))$, $i \in \{1,\dots,4\}$, and the same coloring as in (a). Furthermore, a zoom of the image around $y^3$.}
    		    \label{fig:Exact_Penalty_Ex2}
		    \end{figure}
            The feasible set of the constrained problem coincides with the critical set of $g$, excluding the three isolated critical points of $g$. Since $c_1^1$ and $c_3^1$ are nonconvex, $g$ is nonconvex as well, which is why $R_c$ does not coincide with the actual regularization path $R$ in this case. More precisely, $R$ is merely the union of the path from the minimal point of $f$ to $x^2$ and the intersection of $R_c$ with the feasible set (cf.\ Figure \ref{fig:Exact_Penalty_Ex2}).
            
            In the following we will analyze the kinks of $R_c$, which are located in $x^1$ to $x^4$ and between the minimal point of $f$ and $x^1$ (cf.\ Figure~\ref{fig:Exact_Penalty_Ex2}(a)). First of all, it is easy to see that kinks occur precisely where constraints become satisfied or violated along $R_c$. Due to the construction of the selection functions (cf.\ \eqref{eq:exact_penalty_selection_functions}), this causes Assumption \ref{assum:A5} to be violated in these points. 
            %(In addition to the points $x^1$ to $x^4$, there are two kinks (very close to each other) between the minimal point of $f$ and $x^1$, where $c^1_3$ vanishes and then becomes negative (shown in the zoom in Figure \ref{fig:Exact_Penalty_Ex2}(a)).)
            
            For $x^1$, the gradient of $f$ and the subdifferential of $g$ are shown in Figure \ref{fig:Exact_Penalty_Ex2_subdiff}(a). We see that Assumption \ref{assum:A2} holds and that Assumption \ref{assum:A3} is violated since zero lies on the relative boundary of $\conv(\{ \nabla f(x^1) \} \cup \partial g(x^1))$ (cf.\ Lemma \ref{lem:zero_in_relative_int}). The same behavior occurs in all other kinks except $x^2$. For $x^2$, $\nabla f(x^2)$ and $\partial g(x^2)$ are shown in Figure \ref{fig:Exact_Penalty_Ex2_subdiff}(b). In contrast to the other points, Assumption \ref{assum:A2} is clearly violated since $\dim(\aff(\partial g(x^2))) = 2 = n$. As discussed in Remark \ref{rem:kink_front}, this causes a kink in the image of $R_c$, which can be seen in Figure \ref{fig:Exact_Penalty_Ex2}(b). Moreover, zero lies in the relative interior of $\conv(\{ \nabla f(x^2) \} \cup \partial g(x^2))$ and it is easy to see that Assumption \ref{assum:A3} holds. 
            
            In addition to the features described so far, the image of $R_c$ possesses so-called \emph{turning points}. If we treat the image of $R_c$ as an actual (continuous) path, then these are points where the direction of the path abruptly turns around. For example, this can be observed in $y^3$ and $y^4$ in Figure \ref{fig:Exact_Penalty_Ex2}(b). These points were already discussed in \cite{BGP2021} and in Example 3.4 therein, it was highlighted that they are not necessarily caused by any nonsmoothess of the objectives. Since we are mainly interested in the structure of $R_c$ in this article, we will leave their analysis for future work.
            
            \begin{figure}[ht] 
		    	\centering 
		    	\subfloat[]{\includegraphics[width=0.49\textwidth]{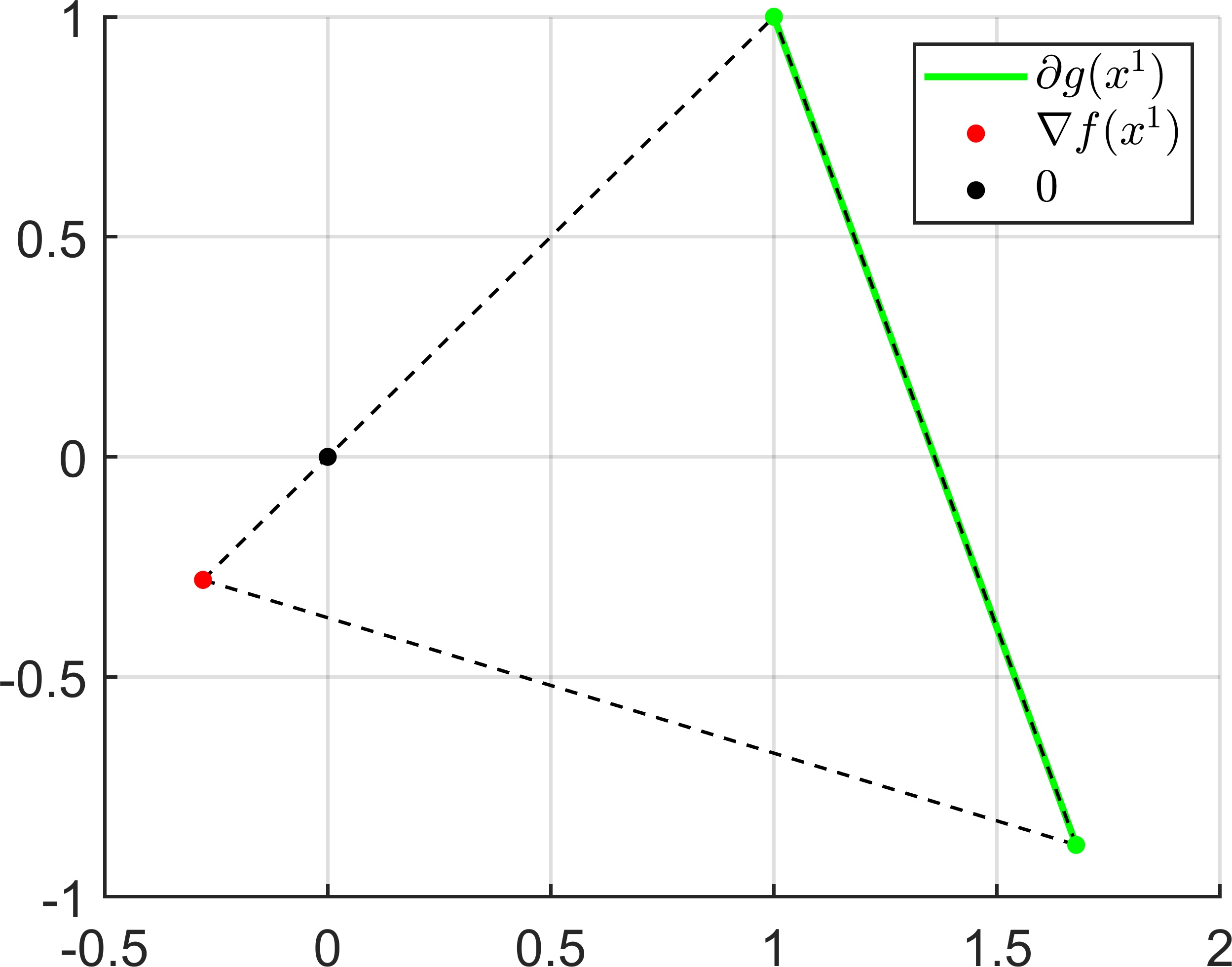}}
		    	\hfill
			    \subfloat[]{\includegraphics[width=0.49\textwidth]{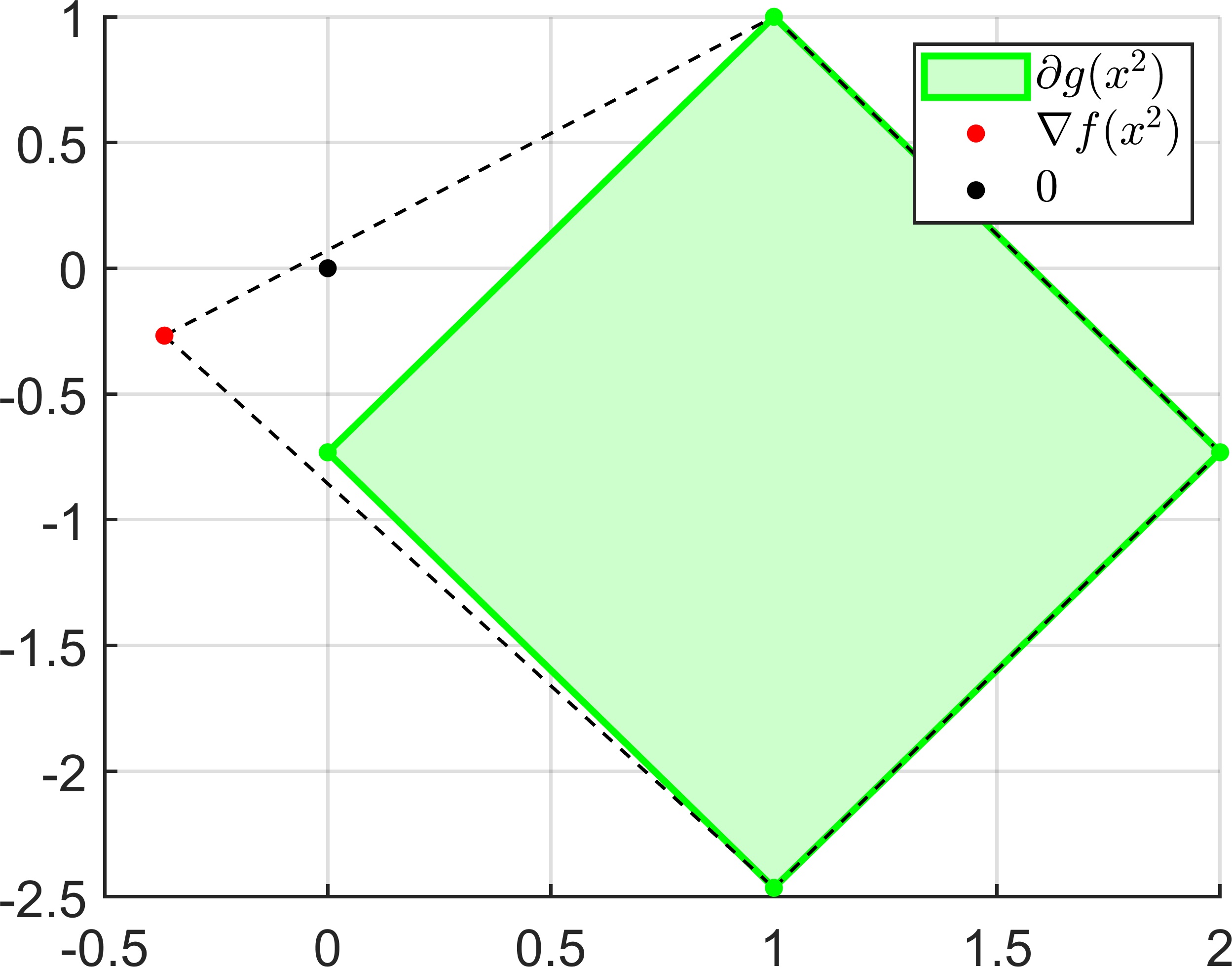}}
    		    \caption{Gradient of $f$, subdifferential of $g$ and the corresponding (relative) boundary of the convex hull (dashed) in $x^1$ and $x^2$ of Example~\ref{ex:Exact_Penalty_Ex2}.}
    		    \label{fig:Exact_Penalty_Ex2_subdiff}
		    \end{figure}
        \end{example}
        
        Note that all kinks in the previous examples were points where constraints become satisfied or violated, which suggests that the structural results from \cite{ZL2015} also hold in our more general nonconvex case, at least for the critical regularization path $R_c$. Furthermore, $R_c$ is still connecting the minimum of $f$ to the solution of the constrained problem \eqref{eq:constrained_OP} (which is the intersection of $R_c$ with the feasible set). Thus, it might be possible to apply a path-following method similar to the one in \cite{ZL2015} to nonconvex problems as well.

\section{Conclusion} \label{sec:conclusion}

    In this article, we have presented results about the structure of regularization paths for piecewise differentiable regularization terms. We did this by first showing that the critical regularization path is related to the Pareto critical set $P_c$ of the multiobjective optimization problem which contains the objective function $f$ and the regularization term $g$. Afterwards, we analyzed $P_c$ by reformulating it as a union of the intersection of certain sets, which allowed us to apply differential geometry to obtain structural results. During this derivation, we identified five assumptions (\ref{assum:A1} to \ref{assum:A5}) which (when combined) are sufficient for $P_c$ to have a smooth structure locally around a given $x^0 \in P_c$. In turn, nonsmooth features of $P_c$ (like ``kinks'') can be classified depending on which of these five assumptions is violated. We demonstrated this by analyzing the regularization paths for the support-vector machine and the exact penalty method.

    Based on our results in this article, there are multiple possible directions for future work:
    \begin{itemize}
        \item We believe that most of our theoretical results would still hold (with only minor adjustments) if we would assume $f$ to be merely piecewise differentiable as well. (In this case, the regularization function $f + \lambda g$ would still be piecewise differentiable.)
        \item Although the MOP \eqref{eq:MOP} we considered in this article has only two objectives, multiobjective optimization can handle any number of objectives. In particular, \eqref{eq:MOP} could be formulated for arbitrarily many regularization terms. We believe that results similar to ours (with a higher-dimensional regularization path) could be obtained for this case. This would allow regularization methods such as the \emph{elastic net} \cite{ZH2005} to be incorporated into our framework.
        \item While we were focused on regularization in this article, our results can also be used in the context of general multiobjective optimization to construct path-following methods for the solution of nonsmooth MOPs, extending \cite{BGP2021, ZL2015, RZ2007, H2001}.
        \item Although we provided the main ingredients for the construction of path-following methods, i.e., a way to compute the tangent space in smooth areas and a characterization of nonsmooth points, their development and actual implementation is still non-trivial. For example, other important ingredients are the computation of new points on $R$ after taking a step along the tangent direction (also known as a \emph{corrector}) and the computation of the correct tangent direction after a kink was found. Treating these problems in our general framework could greatly simplify the development of new path-following methods. 
    \end{itemize}

\clearpage
\begin{center}
	\bfseries\MakeUppercase{Supplementary Materials: On the structure of regularization paths for piecewise differentiable regularization terms}
\end{center}

\setcounter{section}{0}
\renewcommand{\thesection}{SM\arabic{section}}
\section{Proof of Lemma \ref{lem:alpha1_unique}} \label{SM:proof_lem_affine_coeff_unique}
    Let $\{g_1,\dots,g_k\}$ be a set of selection functions of $g$ and let $I^e(x^0) = \{ i_1, \dots, i_l \}$. \\
    a) By assumption, for $s \in \{1,2\}$, there have to be $\alpha^s_1 > 0$ and $\beta^s \in (\R^{\geq 0})^l$ such that $\alpha^s_1 + \sum_{j = 1}^l \beta^s_j = 1$,
    \begin{align} \label{eq:lem_alpha1_unique_1}
        \alpha^s_1 \nabla f(x^0) + \sum_{j = 1}^l \beta^s_j \nabla g_{i_j}(x^0) = 0
    \end{align}
    and $\alpha_1^1 \neq \alpha_1^2$. This implies
    \begin{align*}
        &\alpha^1_1 \nabla f (x^0) + \sum_{j = 1}^l \beta^1_j \nabla g_{i_j}(x^0) = \alpha^2_1 \nabla f(x^0) + \sum_{j = 1}^l \beta^2_j \nabla g_{i_j}(x^0)\\
        \Leftrightarrow \quad &\nabla f(x^0) = \frac{1}{\alpha_1^1 - \alpha_1^2} \sum_{j=1}^l (\beta^2_j-\beta^1_j)\nabla g_{i_j}(x^0) = \sum_{j=1}^l \frac{\beta^2_j-\beta^1_j}{\alpha_1^1 - \alpha_1^2} \nabla g_{i_j}(x^0)
    \end{align*}
    with
    \begin{align*}                
        \sum_{j=1}^l\frac{\beta^2_j-\beta^1_j}{\alpha_1^1 - \alpha_1^2} = \frac{1 - \alpha_1^2 - (1 - \alpha_1^1)}{\alpha_1^1 - \alpha_1^2} = 1,
    \end{align*}
    showing that $\nabla f(x^0) \in \aff(\partial g(x^0))$. \\
    b) Since $\nabla f(x^0) \in \aff(\partial g(x^0))$ there has to be some $\beta' \in \R^l$ with $\sum_{j = 1}^l \beta'_j = 1$ and
    \begin{align} \label{eq:lem_alpha1_unique_2}
        \nabla f(x^0) = \sum_{j = 1}^l \beta'_j \nabla g_{i_j}(x^0).
    \end{align}
    Furthermore, by Lemma \ref{lem:relint_polytope}, zero being contained in $\ri(\conv(\{ \nabla f(x^0) \} \cup \partial g(x^0)))$ is equivalent to the existence of $\alpha_1 > 0$ and $\beta \in (\R^{>0})^l$ with $\alpha_1 + \sum_{j = 1}^l \beta_j = 1$ and
    \begin{align} \label{eq:lem_alpha1_unique_3}
        \alpha_1 \nabla f(x^0) + \sum_{j = 1}^l \beta_j \nabla g_{i_j}(x^0) = 0.
    \end{align}
    Combination of \eqref{eq:lem_alpha1_unique_2} and \eqref{eq:lem_alpha1_unique_3} yields
    \begin{align*}
        (\alpha_1 - \lambda) \nabla f(x^0) + \sum_{j = 1}^l (\beta_j + \lambda \beta'_j) \nabla g_{i_j}(x^0) = 0 \quad \forall \lambda \in \R
    \end{align*}
    and
    \begin{align*}
        (\alpha_1 - \lambda) + \sum_{j = 1}^l (\beta_j + \lambda \beta'_j) 
        = \alpha_1 + \sum_{j = 1}^l \beta_j + \lambda \left( - 1 +  \sum_{j = 1}^l \beta'_j \right)
            = 1 \quad \forall \lambda \in \R.
    \end{align*}
    Since $\alpha_1 > 0$ and $\beta \in (\R^{>0})^l$, there has to be some $\lambda \neq 0$ such that $\alpha_1 - \lambda > 0$ and $\beta + \lambda \beta' \in (\R^{>0})^l$. In particular, $\alpha_1 - \lambda \neq \alpha_1$ is another KKT multiplier corresponding to $f$ in $x^0$, completing the proof.    

\section{Proof of Lemma \ref{lem:zero_in_relative_int}} \label{SM:proof_lem_zero_in_relative_int}
    Let $\{g_1, \dots, g_k\}$ be a set of selection functions that satisfies \ref{assum:A3}. Let $L := \{1,\dots,k\} \setminus \{i_1,\dots,i_r\}$. Since $\{ \nabla f(x^0) \} \cup \{ \nabla g_i(x^0) : i \in \{i_1, \dots, i_r \} \}$ is an affine basis of $\aff(\{ \nabla f(x^0) \} \cup \partial g(x^0))$, there are coefficients $\theta^l \in \R$ and $\nu^l \in \R^r$ for every $l \in L$ with $\theta^l + \sum_{j = 1}^r \nu^l_j = 1$ and 
    \begin{align*}
        \nabla g_l(x^0) = \theta^l \nabla f(x^0) + \sum_{j = 1}^r \nu^l_j \nabla g_{i_j}(x^0).
    \end{align*}
    Let $\bar{\theta} := -\sum_{l \in L} \theta^l$ and $\bar{\nu}_j := -\sum_{l \in L} \nu^l_j$ for $j \in \{1,\dots,r\}$. Then
    \begin{align*}
        0 &= \sum_{l \in L} \left( \nabla g_l(x^0) - \theta^l \nabla f(x^0) - \sum_{j = 1}^r \nu^l_j \nabla g_{i_j}(x^0) \right) \\
        &= \bar{\theta} \nabla f(x^0) + \sum_{j = 1}^r \bar{\nu}_j \nabla g_{i_j}(x^0) + \sum_{l \in L} \nabla g_l(x^0)
    \end{align*}
    and $\bar{\theta} + \sum_{j = 1}^r \bar{\nu}_j + \sum_{l \in L} 1 = 0$. Let $\alpha^0 > 0$ and $\beta^0 \in (\R^{>0})^r$ as in \ref{assum:A3}. Then
    \begin{align} \label{eq:lem_zero_in_relative_int_1}
        0 &= \alpha^0 \nabla f(x^0) + \sum_{j = 1}^r \beta^0_j \nabla g_{i_j}(x^0) \nonumber \\
        &= \alpha^0 \nabla f(x^0) + \sum_{j = 1}^r \beta^0_j \nabla g_{i_j}(x^0) + \lambda \left( \bar{\theta} \nabla f(x^0) + \sum_{j = 1}^r \bar{\nu}_j \nabla g_{i_j}(x^0) + \sum_{l \in L} \nabla g_l(x^0) \right) \nonumber \\
        &= (\alpha^0 + \lambda \bar{\theta}) \nabla f(x^0) + \sum_{j = 1}^r (\beta_j^0 + \lambda \bar{\nu}_j) \nabla g_{i_j}(x^0) + \sum_{l \in L} \lambda \nabla g_l(x^0)
    \end{align}
    for all $\lambda \in \R$. By construction, there is some $\lambda > 0$ such that \eqref{eq:lem_zero_in_relative_int_1} is a vanishing convex combination with strictly positive coefficients. Applying Lemma \ref{lem:relint_polytope} completes the proof.

\section{Remark regarding Section \ref{sec:structure_of_PcI_OmegaI}} \label{SM:rem_quadratic_piecewise_linear}
    Let
    \begin{align*}
        f : \R^n \rightarrow \R, \quad x \mapsto \frac{1}{2} x^\top A x + b^\top x + c
    \end{align*}
    for $A \in \R^{n \times n}$, $b \in \R^n$ and $c \in \R$. Furthermore, assume that there is a set of selection functions $\{ g_1, \dots, g_k \}$ of $g$ consisting of affinely linear functions, i.e.,
    \begin{align*}
        g_i : \R^n \rightarrow \R, \quad x \mapsto d_i^\top x + e_i
    \end{align*}
    for $d_i \in \R^n$, $e_i \in \R$, $i \in \{1,\dots,k\}$. Let $x^0 \in P_c$ and assume that Lemma \ref{lem:intersec_level_set} is applicable, yielding an index set $\{ i_1, \dots, i_r \} \subseteq \{1,\dots,k\}$, an open neighborhood $U' \subseteq \R^n$ of $x^0$ and coefficients $\alpha^0 \in \R^{>0}$ and $\beta^0 \in (\R^{>0})^r$ such that $h(x^0,\alpha^0,\beta^0) = 0$. In this case, the map $h$ reduces to
    \begin{align*}
        h(x,\alpha,\beta) = 
        \begin{pmatrix}
            \alpha (Ax + b) + \sum_{j = 1}^r \beta_j d_{i_j} \\
            \alpha + \sum_{j = 1}^r \beta_j - 1 \\
            ((d_{i_j}^\top - d_{i_1})^\top x + e_{i_j} - e_{i_1})_{j \in \{2,\dots,r\}}
        \end{pmatrix}.
    \end{align*}
    We will show that
    \begin{align} \label{eq:rem_quadratic_piecewise_linear_1}
        {\pr}_x(h^{-1}(0)) \cap U' = ( x^0 + {\pr}_x(\ker(Dh(x^0,\alpha^0,\beta^0))) ) \cap U',
    \end{align}
    which by Lemma \ref{lem:intersec_level_set} implies that $P_c^{I^e(x^0)} \cap \Omega^{I^e(x^0)} \cap U'$ is an affinely linear set with dimension $\dim({\pr}_x(\ker(Dh(x^0,\alpha^0,\beta^0))))$.
        
    To this end, let $(v^x,v^\alpha,v^\beta) \in \ker(Dh(x^0,\alpha^0,\beta^0))$. Since $\alpha^0 > 0$, there is some $\varepsilon > 0$ such that $\alpha^0 - t v^\alpha > 0$ for all $t \in [0,\varepsilon)$. Define
    \begin{align*}
        s : [0,\varepsilon) \rightarrow \R, \quad t \mapsto \frac{t \alpha^0}{\alpha^0 - t v^\alpha}.
    \end{align*}
    Since $\alpha^0 > 0$ and $\beta^0 \in (\R^{>0})^r$, there is some $\varepsilon' \in (0,\varepsilon)$ such that   
    \begin{align*}
        &\alpha^0 + s(t) v^\alpha > 0, \\
        &\beta_j^0 + s(t) v_j^\beta > 0 \quad \forall j \in \{1,\dots,r\}
    \end{align*}
    for all $t \in [0,\varepsilon')$. Furthermore, since $U'$ is an open neighborhood of $x^0$, there is some $\varepsilon'' > 0$ such that $x^0 + t v^x \in U'$ for all $t \in [0,\varepsilon'')$. Finally, a simple calculation shows that
    \begin{align*}
        h(x^0 + t v^x, \alpha^0 + s(t) v^\alpha, \beta^0 + s(t) v^\beta) = 0 \quad \forall t \in [0,\varepsilon'').
    \end{align*}
    Thus, ``$\supseteq$'' holds in \eqref{eq:rem_quadratic_piecewise_linear_1}. \\
    In turn, let $(x^1,\alpha^1,\beta^1) \in U' \times \R^{>0} \times (\R^{>0})^r$ with $h(x^1,\alpha^1,\beta^1) = 0$. Let $s := \frac{\alpha^0}{\alpha^1}$. It is easy to show that
    \begin{align*}
        (x^1 - x^0, s (\alpha^1 - \alpha^0), s (\beta^1 - \beta^0)) \in \ker(Dh(x^0, \alpha^0, \beta^0)),
    \end{align*}
    implying that ``$\subseteq$'' holds in \eqref{eq:rem_quadratic_piecewise_linear_1}. 

\section{Proof of Lemma \ref{lem:PcIOmegaI_touch}} \label{SM:proof_lem_PcIOmegaI_touch}
    By assumption there is a sequence $(x^s)_s \in P_c$ with $\lim_{s \rightarrow \infty} x^s = x^0$ and $I^e(x^s) \neq I^e(x^0)$ for all $s \in \N$. Assume w.l.o.g.\ that $I^e(x^s)$ is constant for all $s \in \N$. %By Lemma \ref{lem:local_ess_active} 
    Due to the definition of the essentially active set, we can assume w.l.o.g.\ that $I^e(x^s) = \{ i_1, \dots, i_m \} \subseteq I^e(x^0)$ for some $m < l$. Since $x^s \in P_c$ for all $s \in \N$, there are sequences $(\alpha^s)_s \in \R^{\geq 0}$, $(\beta^s)_s \in (\R^{\geq 0})^m$ with $\alpha^s + \sum_{j = 1}^m \beta^s_j = 1$ and
    \begin{align*}
        \alpha^s \nabla f(x^s) + \sum_{j = 1}^m \beta^s_j \nabla g_{i_j}(x^s) = 0
    \end{align*}
    for all $s \in \N$. Since $(\alpha^s)_s$ and $(\beta^s)_s$ are bounded, we can assume w.l.o.g.\ that there are $\alpha \in \R^{\geq 0}$ and $\bar{\beta} \in (\R^{\geq 0})^m$ with $\lim_{s \rightarrow \infty} \alpha^s = \alpha$ and $\lim_{s \rightarrow \infty} \beta^s = \bar{\beta}$. In particular, $\alpha + \sum_{j = 1}^l \bar{\beta}_j = 1$. By continuity of $\nabla f$ and $\nabla g_i$, $i \in \{1,\dots,k\}$, we have
    \begin{align*}
        \alpha \nabla f(x^0) + \sum_{j = 1}^m \bar{\beta}_j \nabla g_{i_j}(x^0) = 0.
    \end{align*}
    The proof follows by setting $\beta = (\bar{\beta}_{i_1},\dots,\bar{\beta}_{i_m},0,\dots,0)^\top \in (\R^{\geq 0})^l$.

\section{Remark regarding Section \ref{sec:SVM}} \label{SM:rem_SVM_A1}
    Let $g : \R^n \rightarrow \R$ be any piecewise linear and convex function. Let $x^0 \in \R^n$. By Lemma \ref{lem:local_ess_active}, there is an open neighborhood $U \subseteq \R^n$ of $x^0$ and a set of (affinely linear) selection functions $\{ g_1, \dots, g_k \}$ of $g|_U$ which are all essentially active in $x^0$. In particular, $I(x^0) = \{1,\dots,k\}$, so \ref{assum:A1}(i) holds. \\
    To see that \ref{assum:A1}(ii) holds, let $z \in U$ and $j \in I(z)$. Since all selection functions are essentially active in $x^0$, we have
    \begin{align*}
        x^0 \in \cl(\interior(\{ y \in U : g(y) = g_j(y)\})),
    \end{align*}
    so $V := \interior(\{ y \in U : g(y) = g_j(y)\}) \neq \emptyset$. Let $y \in V$. Since $g$ is convex and $g_j$ is affinely linear, we have
    \begin{equation}  \label{eq:SVM_A1_1}
        \begin{aligned}
            g((1-\lambda)y + \lambda z) &\leq (1-\lambda) g(y) + \lambda g(z) = (1-\lambda) g_j(y) + \lambda g_j(z) \\
            &= g_j((1-\lambda)y + \lambda z) \quad \forall \lambda \in [0,1].
        \end{aligned}
    \end{equation}
    Assume that we have inequality in \eqref{eq:SVM_A1_1}, i.e., assume that there is some $\bar{\lambda} \in [0,1]$ with $g(\bar{x}) < g_j(\bar{x})$ for $\bar{x} := (1-\bar{\lambda})y + \bar{\lambda} z$.
    Then
    \begin{align*}
        g((1-\lambda)y + \lambda \bar{x}) &\leq (1-\lambda) g(y) + \lambda g(\bar{x}) < (1-\lambda) g(y) + \lambda g_j(\bar{x}) \\
        &= g_j((1-\lambda)y + \lambda \bar{x}) \quad \forall \lambda \in (0,1].
    \end{align*}
    This is a contradiction to the openness of $V$,
    so we must have equality in \eqref{eq:SVM_A1_1}. This implies
    \begin{align*}
        j \in I((1-\lambda)y + \lambda z) \quad \forall \lambda \in [0,1].
    \end{align*}
    As this holds for arbitrary $y \in V$, we have
    \begin{align*}
        j \in I(x) \quad \forall x \in \conv(V \cup \{ z \}).
    \end{align*}
    Since $V$ is open in $\R^n$, it is possible to show that
    \begin{align*}
        z \in \cl(\interior(\conv(V \cup \{ z \}))) \subseteq \cl(\interior(\{ y \in U : g(y) = g_j(y)\})),
    \end{align*}
    showing that $j \in I^e(z)$. \\
    Since $\nabla g_i$ is constant for all $i \in \{1,\dots,k\}$, it is easy to see that \ref{assum:A1}(iii) holds as well.

\section{Remark regarding Section \ref{sec:exact_penalty_method}} \label{SM:rem_exact_penalty_A1}
    We begin by deriving an explicit expression for the active set. To this end, let $x \in \R^n$ and assume w.l.o.g.\ that there are $\bar{p} \in \{1,\dots,p\}$, $\bar{q} \in \{1,\dots,q\}$ such that 
    \begin{equation} \label{eq:rem_exact_penalty_A1_2}
        \begin{aligned}
            c^1_i(x) = 0, \quad \forall i \in \{1,\dots,\bar{p}\}, \qquad 
            &c^1_i(x) \neq 0, \quad \forall i \in \{\bar{p}+1,\dots,p\},\\
            c^2_j(x) = 0, \quad \forall j \in \{1,\dots,\bar{q}\}, \qquad
            &c^2_j(x) \neq 0, \quad \forall j \in \{\bar{q}+1,\dots,q\}.
        \end{aligned}
    \end{equation}
    For $i \in \{ \bar{p}+1, \dots, p \}$ and $j \in \{ \bar{q}+1, \dots, q \}$ define
    \begin{equation} \label{eq:rem_exact_penalty_A1_1}
        \begin{aligned}
            \hat{\theta}_i &:= 
            \begin{cases}
                1, & \text{if } c_i^1(x) > 0 \\
                0, & \text{if } c_i^1(x) < 0
            \end{cases}, \\
            \hat{\sigma}_j &:= \sign(c^2_j(x)),
        \end{aligned}
     \end{equation}
    and
    \begin{align*}  
        \bar{c} : \R^n \rightarrow \R, \quad x \mapsto \sum_{i=\bar{p}+1}^p \hat{\theta}_i c_i^1(x) + \sum_{j = \bar{q}+1}^{q} \hat{\sigma}_j c_j^2(x).
    \end{align*}
    Then by construction,
    \begin{align*}
        g(x) &= \sum_{i=1}^p \max \{ c^1_i(x),0 \} + \sum_{j=1}^q |c^2_j(x)| = \sum_{i=\bar{p}+1}^p \max \{ c^1_i(x),0 \} + \sum_{j=\bar{q}+1}^q |c^2_j(x)| \\
        &= \sum_{i=\bar{p}+1}^p \hat{\theta}_i c_i^1(x) + \sum_{j = \bar{q}+1}^{q} \hat{\sigma}_j c_j^2(x) = \bar{c}(x) \\
        &= \bar{c}(x) + \sum_{i=1}^{\bar{p}} \bar{\theta}_i c_i^1(x) + \sum_{j=1}^{\bar{q}} \bar{\sigma}_j c_j^2(x) \\
        &= g_{(\bar{\theta},\hat{\theta}),(\bar{\sigma},\hat{\sigma})}(x)
    \end{align*}
    for all $\bar{\theta} \in \{0,1\}^p$, $\bar{\sigma} \in \{-1,1\}^q$. Thus
    \begin{align*}
        \bar{I} := \left\{
        ((\bar{\theta},\hat{\theta})^\top,(\bar{\sigma},\hat{\sigma})^\top) :
        \bar{\theta} \in \{0,1\}^{\bar{p}}, \bar{\sigma} \in \{-1,1\}^{\bar{q}} \right\} \subseteq I(x).
    \end{align*}
    To show that ``$\supseteq$'' holds, let $(\tilde{\theta},\tilde{\sigma}) \in I(x)$. Then
    \begin{align*}
        \hat{\theta}_i - \tilde{\theta}_i = 
        \begin{cases}
            -1, & \text{if } \hat{\theta}_i \neq \tilde{\theta}_i, \ \hat{\theta}_i = 0 \\ 
            1, & \text{if } \hat{\theta}_i \neq \tilde{\theta}_i, \ \hat{\theta}_i = 1 \\
            0, & \text{otherwise}
        \end{cases},
        \quad
        \hat{\sigma}_j - \tilde{\sigma}_j = 
        \begin{cases}
            -2, & \text{if } \hat{\sigma}_j \neq \tilde{\sigma}_j, \ \hat{\sigma}_i = -1 \\ 
            2, & \text{if } \hat{\sigma}_j \neq \tilde{\sigma}_j, \ \hat{\sigma}_i = 1 \\
            0, & \text{otherwise}
        \end{cases}
    \end{align*}
    for all $i \in \{\bar{p}+1, \dots, p\}$, $j \in \{\bar{q}+1, \dots, q\}$. Combined with \eqref{eq:rem_exact_penalty_A1_1}, this implies
    \begin{align*}
        0 &= g(x) - g_{\tilde{\theta},\tilde{\sigma}}(x) = \sum_{i=1}^p \max \{ c^1_i(x),0 \} + \sum_{j=1}^q |c^2_j(x)| - \sum_{i=1}^p \tilde{\theta}_i c_i^1(x) - \sum_{j =1}^{q} \tilde{\sigma}_j c_j^2(x) \\ 
        &= \sum_{i=\bar{p}+1}^p (\hat{\theta}_i - \tilde{\theta}_i) c_i^1(x) + \sum_{j = \bar{q}+1}^{q} (\hat{\sigma}_j - \tilde{\sigma}_j) c_j^2(x) \\
        &= \sum_{\substack{i=\bar{p}+1\\ \hat{\theta}_i \neq \tilde{\theta}_i}}^p |c_i^1(x)| + 
        \sum_{\substack{j = \bar{q}+1\\ \hat{\sigma}_j \neq \tilde{\sigma}_j}}^{q} 2 |c_j^2(x)|,
    \end{align*}
    so both sums must be empty, i.e., $\hat{\theta}_i = \tilde{\theta}_i$ for all $i \in \{ \bar{p}+1, \dots, p \}$ and $\hat{\sigma}_j = \tilde{\sigma}_j$ for all $i \in \{ \bar{q}+1, \dots, q \}$. In particular $(\tilde{\theta},\tilde{\sigma}) \in \bar{I}$, so $\bar{I} = I(x)$ for all $x \in \R^n$.
    
    In the following, we will show that all active selection functions are essentially active. To this end, let $(\theta,\sigma) \in I(x) = \bar{I}$. Define
    \begin{align*}
        v^i &:= 
        \begin{cases}
            \nabla c_i^1(x), & \text{if } \theta_i = 0 \\
            -\nabla c_i^1(x), & \text{if } \theta_i = 1
        \end{cases} \quad \forall i \in \{1,\dots,\bar{p}\},  \\
        w^j &:= -\sigma_j \nabla c_j^2(x) \quad \forall j \in \{1,\dots,\bar{q}\}, \\
        C &:= \conv( \{ v^i : i \in \{1,\dots,\bar{p}\} \} \cup \{ w^j : j \in \{1,\dots,\bar{q}\} \} ).
    \end{align*}
    The LICQ (cf. \eqref{eq:LICQ}) implies that $0 \notin C$. With a basic result from convex analysis (cf. Lemma in \cite{CG1959}), it follows that there is some $d \in \R^n \setminus \{ 0 \}$ with
    \begin{align*}
        0 &> \langle v^i, d \rangle =
        \begin{cases}
            \langle \nabla c_i^1(x), d \rangle, & \text{if } \theta_i = 0 \\
            - \langle \nabla c_i^1(x), d \rangle, & \text{if } \theta_i = 1
        \end{cases} \quad \forall i \in \{1,\dots,\bar{p}\}, \\
        0 &> \langle w^j, d \rangle = -\langle \sigma_j \nabla c_j^2(x), d \rangle \quad \forall j \in \{1,\dots,\bar{q}\}.
    \end{align*}
    The continuity of the constraint functions implies that there is some $T > 0$ such that
    \begin{align*}
        \sign(c_i^1(x + t d)) &= 
        \begin{cases}
            -1, & \text{if } \theta_i = 0 \\
            1, & \text{if } \theta_i = 1
        \end{cases} \quad \forall i \in \{1,\dots,p\}, \\
        \sign(c_j^2(x + t d)) &= \sigma_j \quad \forall j \in \{1,\dots,q\},
    \end{align*}
    for all $t \in (0,T)$. 
    Note that in particular, for all points $x + t d$ with $t \in (0,T)$, there is a neighborhood of $x + t d$ on which $g$ is smooth with $g = g_{\theta,\sigma}$. This shows that $(\theta,\sigma) \in I^e(x)$. 
    
    Let $x^0 \in \R^n$. From our discussion up to this point it follows that \ref{assum:A1}(i) and (ii) hold for an appropriate open neighborhood $U$ of $x^0$. To show that \ref{assum:A1}(iii) holds, let $(\theta',\sigma')$ be any element of $\bar{I} = I(x^0)$ (with $\bar{p}$ and $\bar{q}$ as in \eqref{eq:rem_exact_penalty_A1_2}) and $z \in U$. Clearly,
    \begin{multline}
        \label{eq:rem_exact_penalty_A1_3}
            \spn( \{ \nabla g_{\theta,\sigma}(z) - \nabla g_{\theta',\sigma'}(z) : (\theta,\sigma) \in \bar{I} \} ) \\
            \subseteq \spn( \{ \nabla c_i^1(z) : i \in \{ 1, \dots, \bar{p} \} \} \cup \{ \nabla c_j^2(z) : j \in \{ 1, \dots, \bar{q} \} \} ).
    \end{multline}
    We will show that we actually have equality in \eqref{eq:rem_exact_penalty_A1_3}, which implies that \ref{assum:A1}(iii) holds by the LICQ (cf. \eqref{eq:LICQ}).
    To this end, let $i' \in \{1,\dots,\bar{p} \}$. Define
    \begin{align*}
        \tilde{\theta}_i := 
        \begin{cases}
            \theta'_i, & \text{if } i \neq i' \\
            1, & \text{if } i = i', \theta'_i = 0 \\
            0, & \text{if } i = i', \theta'_i = 1
        \end{cases}
        \quad \forall i \in \{1,\dots,\bar{p}\}.
    \end{align*}
    Then $(\tilde{\theta},\sigma') \in \bar{I}$, so $g_{\tilde{\theta},\sigma'}(z) - g_{\theta',\sigma'}(z) = \pm \nabla c_{i'}^1(z)$ and $\nabla c_{i'}^1(z)$ is contained in the left-hand side of \eqref{eq:rem_exact_penalty_A1_3}. Analogously, it is possible to show that $\nabla c_j^2(z)$ is contained in the left-hand side of \eqref{eq:rem_exact_penalty_A1_3} for all $j \in \{1,\dots,\bar{q}\}$, such that equality holds.

\bibliographystyle{siamplain}
\bibliography{references}

\begin{thebibliography}{10}

\bibitem{AB2006}
{\sc C.~Aliprantis and K.~Border}, {\em Infinite Dimensional Analysis: A
  Hitchhiker's Guide}, Springer-Verlag Berlin Heidelberg, 3rd~ed., June 2006,
  \url{https://doi.org/10.1007/3-540-29587-9}.

\bibitem{BKM2014}
{\sc A.~Bagirov, N.~Karmitsa, and M.~M. M\"{a}kel\"{a}}, {\em {Introduction to
  Nonsmooth Optimization}}, Springer International Publishing, 2014,
  \url{https://doi.org/10.1007/978-3-319-08114-4}.

\bibitem{BGP2021}
{\sc K.~Bieker, B.~Gebken, and S.~Peitz}, {\em On the treatment of optimization
  problems with l1 penalty terms via multiobjective continuation}, IEEE
  Transactions on Pattern Analysis and Machine Intelligence,  (2021),
  \url{https://doi.org/10.1109/TPAMI.2021.3114962}.

\bibitem{B2006}
{\sc C.~M. Bishop}, {\em Pattern Recognition and Machine Learning (Information
  Science and Statistics)}, Springer-Verlag, Berlin, Heidelberg, 2006.

\bibitem{B1983}
{\sc A.~Br{\o}ndsted}, {\em An Introduction to Convex Polytopes}, Springer New
  York, 1983, \url{https://doi.org/10.1007/978-1-4612-1148-8}.

\bibitem{C2004}
{\sc A.~Chambolle}, {\em An algorithm for total variation minimization and
  applications}, Journal of Mathematical Imaging and Vision, 20 (2004),
  p.~89–97, \url{https://doi.org/10.1023/b:jmiv.0000011325.36760.1e}.

\bibitem{CG1959}
{\sc W.~Cheney and A.~A. Goldstein}, {\em Proximity maps for convex sets},
  Proceedings of the American Mathematical Society, 10 (1959), pp.~448--448,
  \url{https://doi.org/10.1090/s0002-9939-1959-0105008-8}.

\bibitem{C1990}
{\sc F.~H. Clarke}, {\em Optimization and Nonsmooth Analysis}, Society for
  Industrial and Applied Mathematics, Jan. 1990,
  \url{https://doi.org/10.1137/1.9781611971309}.

\bibitem{DCM2013}
{\sc J.~Dai, C.~Chang, F.~Mai, D.~Zhao, and W.~Xu}, {\em On the {SVMpath}
  singularity}, {IEEE} Transactions on Neural Networks and Learning Systems, 24
  (2013), pp.~1736--1748, \url{https://doi.org/10.1109/tnnls.2013.2262180}.

\bibitem{DPA2002}
{\sc K.~Deb, A.~Pratap, S.~Agarwal, and T.~Meyarivan}, {\em A fast and elitist
  multiobjective genetic algorithm: {NSGA}-{II}}, {IEEE} Transactions on
  Evolutionary Computation, 6 (2002), pp.~182--197,
  \url{https://doi.org/10.1109/4235.996017}.

\bibitem{EHJ2004}
{\sc B.~Efron, T.~Hastie, I.~Johnstone, and R.~Tibshirani}, {\em Least angle
  regression}, The Annals of Statistics, 32 (2004),
  \url{https://doi.org/10.1214/009053604000000067}.

\bibitem{E2005}
{\sc M.~Ehrgott}, {\em {Multicriteria Optimization}}, Springer-Verlag, 2005,
  \url{https://doi.org/10.1007/3-540-27659-9}.

\bibitem{EG2015}
{\sc L.~C. Evans and R.~F. Gariepy}, {\em {Measure Theory and Fine Properties
  of Functions, Revised Edition}}, Chapman and Hall/{CRC}, Apr. 2015,
  \url{https://doi.org/10.1201/b18333}.

\bibitem{G2011}
{\sc J.~Gallier}, {\em {Geometric Methods and Applications}}, Springer New
  York, 2011, \url{https://doi.org/10.1007/978-1-4419-9961-0}.

\bibitem{GP2021}
{\sc B.~Gebken and S.~Peitz}, {\em {An Efficient Descent Method for Locally
  Lipschitz Multiobjective Optimization Problems}}, Journal of Optimization
  Theory and Applications, 80 (2021), pp.~3--29,
  \url{https://doi.org/10.1007/s10957-020-01803-w}.

\bibitem{GPD2019}
{\sc B.~Gebken, S.~Peitz, and M.~Dellnitz}, {\em On the hierarchical structure
  of pareto critical sets}, Journal of Global Optimization, 73 (2019),
  pp.~891--913, \url{https://doi.org/10.1007/s10898-019-00737-6}.

\bibitem{GBC2016}
{\sc I.~Goodfellow, Y.~Bengio, and A.~Courville}, {\em Deep Learning}, MIT
  Press, 2016.
\newblock \url{http://www.deeplearningbook.org}.

\bibitem{HRT2004}
{\sc T.~Hastie, S.~Rosset, R.~Tibshirani, and J.~Zhu}, {\em {The Entire
  Regularization Path for the Support Vector Machine}}, Journal of Machine
  Learning Research, 5 (2004), pp.~1391--1415.

\bibitem{HTF2009}
{\sc T.~Hastie, R.~Tibshirani, and J.~Friedman}, {\em The Elements of
  Statistical Learning}, Springer New York, 2009,
  \url{https://doi.org/10.1007/978-0-387-84858-7}.

\bibitem{H2001}
{\sc C.~Hillermeier}, {\em Nonlinear Multiobjective Optimization},
  Birkh\"{a}user Basel, 2001, \url{https://doi.org/10.1007/978-3-0348-8280-4}.

\bibitem{J2015}
{\sc D.~Jungnickel}, {\em Optimierungsmethoden}, Springer Berlin Heidelberg,
  2015, \url{https://doi.org/10.1007/978-3-642-54821-5}.

\bibitem{L2012}
{\sc J.~Lee}, {\em Introduction to smooth manifolds}, Springer, 2nd~ed., 2012,
  \url{https://doi.org/10.1007/978-1-4419-9982-5}.

\bibitem{L1989}
{\sc C.~Lemar{\'e}chal}, {\em {Chapter VII. Nondifferentiable Optimization}},
  in Handbooks in Operations Research and Management Science, Elsevier, 1989,
  pp.~529--572, \url{https://doi.org/10.1016/s0927-0507(89)01008-x}.

\bibitem{MY2012}
{\sc J.~Mairal and B.~Yu}, {\em Complexity analysis of the lasso regularization
  path}, in Proceedings of the 29th International Coference on International
  Conference on Machine Learning, ICML'12, Madison, WI, USA, 2012, Omnipress,
  p.~1835–1842.

\bibitem{MEK2014}
{\sc M.~M. M{\"{a}}kel{\"{a}}, V.~P. Eronen, and N.~Karmitsa}, {\em {On
  nonsmooth multiobjective optimality conditions with generalized
  convexities}}, Optimization in Science and Engineering: In Honor of the 60th
  Birthday of Panos M. Pardalos,  (2014), pp.~333--357,
  \url{https://doi.org/10.1007/978-1-4939-0808-0_17}.

\bibitem{MKW2014}
{\sc M.~M. M{\"a}kel{\"a}, N.~Karmitsa, and O.~Wilppu}, {\em Multiobjective
  proximal bundle method for nonsmooth optimization}, TUCS technical report No
  1120, Turku Centre for Computer Science, Turku,  (2014).

\bibitem{M1998}
{\sc K.~Miettinen}, {\em {Nonlinear Multiobjective Optimization}}, Springer
  {US}, 1998, \url{https://doi.org/10.1007/978-1-4615-5563-6}.

\bibitem{NW2006}
{\sc J.~Nocedal and S.~J. Wright}, {\em Numerical optimization}, Springer
  Series in Operations Research and Financial Engineering, Springer Science \&
  Business Media, second edition~ed., 2006.

\bibitem{OSY2010}
{\sc C.-J. Ong, S.~Shao, and J.~Yang}, {\em An improved algorithm for the
  solution of the regularization path of support vector machine}, {IEEE}
  Transactions on Neural Networks, 21 (2010), pp.~451--462,
  \url{https://doi.org/10.1109/tnn.2009.2039000}.

\bibitem{OPT2000}
{\sc M.~Osborne, B.~Presnell, and B.~A. Turlach}, {\em A new approach to
  variable selection in least squares problems}, {IMA} Journal of Numerical
  Analysis, 20 (2000), pp.~389--403,
  \url{https://doi.org/10.1093/imanum/20.3.389}.

\bibitem{PH2007}
{\sc M.~Y. Park and T.~Hastie}, {\em L1-regularization path algorithm for
  generalized linear models}, Journal of the Royal Statistical Society: Series
  B (Statistical Methodology), 69 (2007), pp.~659--677,
  \url{https://doi.org/10.1111/j.1467-9868.2007.00607.x}.

\bibitem{PG1989}
{\sc G.~D. Pillo and L.~Grippo}, {\em Exact penalty functions in constrained
  optimization}, {SIAM} Journal on Control and Optimization, 27 (1989),
  pp.~1333--1360, \url{https://doi.org/10.1137/0327068}.

\bibitem{R1970}
{\sc R.~T. Rockafellar}, {\em Convex Analysis}, Princeton University Press,
  1970, \url{https://doi.org/10.1515/9781400873173}.

\bibitem{RZ2007}
{\sc S.~Rosset and J.~Zhu}, {\em Piecewise linear regularized solution paths},
  The Annals of Statistics, 35 (2007),
  \url{https://doi.org/10.1214/009053606000001370}.

\bibitem{S2012}
{\sc S.~Scholtes}, {\em Introduction to Piecewise Differentiable Equations},
  Springer Briefs in Optimization, New York, NY : Springer New York, 2012,
  \url{https://doi.org/10.1007/978-1-4614-4340-7}.

\bibitem{SAG2016}
{\sc C.~G. Sentelle, G.~C. Anagnostopoulos, and M.~Georgiopoulos}, {\em A
  simple method for solving the {SVM} regularization path for semidefinite
  kernels}, {IEEE} Transactions on Neural Networks and Learning Systems, 27
  (2016), pp.~709--722, \url{https://doi.org/10.1109/tnnls.2015.2427333}.

\bibitem{T1996}
{\sc R.~Tibshirani}, {\em {Regression Shrinkage and Selection via the Lasso}},
  Journal of the Royal Statistical Society. Series B (Methodological), 58
  (1996), pp.~267--288.

\bibitem{U2001}
{\sc M.~Ulbrich}, {\em Nonsmooth Newton-like methods for variational
  inequalities and constrained optimization problems in function spaces},
  habilitation thesis, Fakultät für Mathematik, Technische Universität
  München, München, Germany, 2001.

\bibitem{WZC2019}
{\sc B.~Wang, L.~Zhou, Z.~Cao, and J.~Dai}, {\em Ridge-adding approach for
  {SVMpath} singularities}, {IEEE} Access, 7 (2019), pp.~47728--47736,
  \url{https://doi.org/10.1109/access.2019.2909297}.

\bibitem{ZL2013}
{\sc H.~Zhou and K.~Lange}, {\em A path algorithm for constrained estimation},
  Journal of Computational and Graphical Statistics, 22 (2013), pp.~261--283.

\bibitem{ZL2015}
{\sc H.~Zhou and K.~Lange}, {\em Path following in the exact penalty method of
  convex programming}, Computational Optimization and Applications, 61 (2015),
  pp.~609--634, \url{https://doi.org/10.1007/s10589-015-9732-x}.

\bibitem{ZH2005}
{\sc H.~Zou and T.~Hastie}, {\em Regularization and variable selection via the
  elastic net}, Journal of the Royal Statistical Society: Series B (Statistical
  Methodology), 67 (2005), pp.~301--320,
  \url{https://doi.org/10.1111/j.1467-9868.2005.00503.x}.

\end{thebibliography}
\end{document}